\newcommand{\R}{\mathbb{R}}
\newcommand{\C}{\mathbb{C}}
\newcommand{\Z}{\mathbb{Z}}
\newcommand{\N}{\mathbb{N}}
\newcommand{\F}{\mathcal{F}}
\newcommand{\D}{\mathcal{D}}
\newcommand{\dir}{\mathcal{E}}
\newcommand{\ie}{\emph{i.$\,$e.$\!$} }
\newcommand{\e}{\varepsilon}
\newcommand{\Heat}{\mathcal{H}}
\newcommand{\Pois}{\mathcal P}
\newcommand{\la}{\lambda}
\newtheorem{theorem}{Theorem}[section]
\newtheorem{corollary}[theorem]{Corollary}
\newtheorem{lemma}[theorem]{Lemma}
\newtheorem{proposition}[theorem]{Proposition}
\theoremstyle{definition}
\newtheorem{definition}{Definition}[subsection]
\theoremstyle{remark}
\newtheorem{remark}[theorem]{Remark}
\newtheorem{step}{Step}
\DeclareMathOperator{\dom}{dom}
\numberwithin{equation}{section}
\begin{document}

\title[Fatou-type theorems on PCF sets]{Nontangential limits and Fatou-type
theorems on post-critically finite self-similar sets}

\author{Ricardo A. S\'aenz}

\address{Facultad de Ciencias\\Universidad de Colima\\Colima, Colima, Mexico,
28045}

\email{rasaenz@ucol.mx}
\urladdr{http://fejer.ucol.mx/ricardo}

\date{\today}

\keywords{Fractals, p.c.f. sets, Poisson integrals, boundary behavior of
harmonic functions}

\subjclass[2000]{28A80, 31B25}

\begin{abstract}
In this paper we study the boundary limit properties of harmonic functions on
$\R_+\times K$, the solutions $u(t,x)$ to the Poisson equation
\[
\frac{\partial^2 u}{\partial t^2} + \Delta u = 0,
\]
where $K$ is a p.c.f. set and $\Delta$ its Laplacian given by a regular
harmonic structure. In particular, we prove the existence of nontangential
limits of the corresponding Poisson integrals, and the analogous results
of the classical Fatou theorems for bounded and nontangentially bounded
harmonic functions.

\end{abstract}

\maketitle

\section{Introduction}

There has recently been a growing interest in the study of analysis on
fractals, in particular post-critically finite (p.c.f.) self similar sets and 
their harmonic structure defined by Kigami \cite{Kigami93}. Analogous
questions from classical analysis have been asked on the setting of p.c.f.
fractals, from spectral theory of the Laplacian \cite{ASST03,CSW09,ORS10},
functional analysis \cite{Strichartz03,RogStr10,IR10} and differential 
equations \cite{DSV99,Strichartz05,Pelander07}.

In this paper we study the boundary limit properties of harmonic functions on
the tube $\R_+\times K$, the solutions $u(t,x)$ to the Poisson equation
\[
\frac{\partial^2 u}{\partial t^2} + \Delta u = 0,
\]
where $K$ is a p.c.f. set and $\Delta$ its Laplacian given by a regular
harmonic structure. In Section \ref{kernels-sect} we define the Poisson kernel
and prove its elementary properties, as well as proving the existence of
nontangential limits of Poisson integrals in the boundary $t\to 0$.
Nontangential limits are  defined in terms of proper "cones", depending on the 
Hausdorff dimension of $K$ with respect to effective resistence metric.

In Section \ref{Fatou-sect} we prove an analogous Fatou theorem for bounded
Dirichlet harmonic functions on $\R_+\times K$. We also extend these results to
Dirichlet harmonic functions with uniformly bounded $L^p$ norms,
$1\le p < \infty$.

We finish the paper with the analogous version to the local Fatou theorem, in 
Section \ref{local-sect}, for nontangentially bounded harmonic functions on 
$\R_+\times K$. As we make use of estimates from below for the Neumann heat 
kernel on $K$, we prove this results only for nested fractals \cite{Lindstrom}.

\section{Preliminaries}

\subsection{P.c.f. self-similar structures}

Let $(K,S,\{F_i\}_{i\in S})$ be a self-similar structure. $W_m=S^m$ is the set
of words of length $m$, and $W_*=\bigcup_{m\ge0}W_m$, where 
$W_0 = \{\emptyset\}$ and $\emptyset$ is called the empty word. For $w\in W_m$,
we write $F_w = F_{w_1}\circ\cdots\circ F_{w_m}$ ($F_\emptyset$ is set to be
the identity mapping) and $K_w = F_w(K)$. The critical set of $K$ is the set
\[
\mathcal C = \bigcup_{\substack{i,j\in S\\i\not=j}}K_i\cap K_j,
\]
and the post-critical set is given by 
$V_0 = \{p\in K:\exists\text{$w\in W_*$, $F_w(p)\in \mathcal C$}\}$. We say
that $K$ is a post-critically finite (p.c.f.) self-similar structure if the 
set $V_0$ is finite. From now on, we will assume that $K$ is a 
p.c.f.\/ self-similar structure, and that $V_0\not=\emptyset$.

$V_0$ will be called the boundary of $K$. We define 
$V_m = \bigcup_{w\in W_m} F_w(V_0)$, and $V_* = \bigcup_{m\ge0}V_m$. Basic
properties and examples of p.c.f. self-similar structures can be found in
\cite[Section 1.3]{Kigami}.

If $0<\mu_i<1$ and $\sum_{i\in S}\mu_i=1$, let $\mu$ be the Bernoulli measure
with weights $(\mu_i)_{i\in S}$. It satisfies 
$\mu(K_w)=\mu_w = \mu_{w_1}\cdots\mu_{w_m}$ for $w\in W_m$ and, for any
integrable function $f$ on $K$,
\[
\int_K f d\mu = \sum_{w\in W_m}\mu_w \int_K f\circ F_w d\mu.
\]

We define the Banach spaces $L^p(K,\mu)$, $1\le p\le\infty$, as usual.

\subsection{Harmonic structure}

Let  $(D,\mathbf{r})$ be a regular harmonic structure on $K$, where 
$\mathbf{r}=(r_i)_{i\in S}$ with $0 < r_i < 1,i\in S$. We denote by $R(x,y)$
the effective resistance metric induced by $(D,\mathbf{r})$. Under this metric,
$V_*$ is dense in $K$ \cite[Section 3.3]{Kigami} and the functions $F_w$
become contractions with Lipschitz constant
$\text{Lip\,} F_w\le r_w = r_{w_1}\cdots r_{w_m}$.

Let $\dir(f,g)$ be the Dirichlet form associated to $(D,\mathbf r)$, defined on
$\F\subset C(K)$. It satisfies, for $m\ge 1$,
\begin{equation*}
\dir(f,g) = \sum_{w\in W_m}\frac{1}{r_w}\dir(f\circ F_w,g\circ F_w).
\end{equation*}

By a theorem of Kumagai \cite{Kumagai93} (see \cite[Section 3.4]{Kigami}), for
any self-similar measure $\mu$ on $K$, $(\dir,\F)$ is a local regular Dirichlet
form on $L^2(K,\mu)$, and the corresponding non-negative self-adjoint
operator $H_N$ on $L^2(K,\mu)$ has compact resolvent. If we define
$\F_0=\{u\in\F: u|_{V_0} = 0\}$, then $(\dir,\F_0)$ is a local Dirichlet form
on $L^2(K,\mu)$, and the corresponding operator $H_D$ has also compact
resolvent. Moreover, $H_D$ is invertible, and $(H_D)^{-1}$ is a compact
operator on $L^2(K,\mu)$. The operators $-H_N$ and $-H_D$ are called the
\emph{Neumann} and \emph{Dirichlet} Laplacians, respectively.

\subsection{Maximal function}\label{Maximal}

Let $d$ be the unique real number that satisfies $\sum_{i\in S}(r_i)^d = 1$.
$d$ is called the \emph{similarity dimension} of the harmonic structure 
$(D,\mathbf{r})$, and it can be proved that $d$ corresponds to the Hausdorff
dimension of $K$ with respect to the resistance metric $R$
\cite[Section 4.2]{Kigami}. If $\mu$ is the self-similar measure on $K$ with
weights $\mu_i = (r_i)^d$, $i\in S$, it is easy to prove that
$\mu\big(B_\e(x)\big) \sim \e^d$ for any $\e>0$ sufficiently small. More
precisely, there exist two constants $A_1, A_2>0$ such that, for any $x\in K$
and sufficiently small $\e>0$,
\begin{equation}\label{scalmu}
A_1\e^d \le \mu(B_\e(x))\le A_2 \e^d.
\end{equation}
See \cite{Saenz02} for details. For $f\in L^1(K,\mu)$, we define the operator
\begin{equation}\label{maxfunction}
Mf(x) = \sup_{\e>0}\frac{1}{\mu\big(B_\e(x)\big)}\int_{B_\e(x)}|f|d\mu,
\end{equation}
where $B_\e(x)$ is the ball of radius $\e$ around $x$ with respect to the 
effective resistance metric. The following theorem is standard \cite[Chapter
III]{Stein70}.

\begin{theorem}\label{maxthm}
If $f$ is an integrable function and $Mf$ is given by \eqref{maxfunction},
then $Mf(x)$ is finite a.e. Moreover,
\begin{enumerate}
\item There exists a constant $A>0$ such that, for any $f\in L^1(K,\mu)$
and $\alpha>0$,
\[
\mu\big(\{x\in K: Mf(x)>\alpha\}\big) \le \frac{A}{\alpha}||f||_{L^1(K,\mu)};
\]
\item $M$ extends to a bounded operator on $L^p(K,\mu)$ for $1<p\le\infty$.
\end{enumerate}
\end{theorem}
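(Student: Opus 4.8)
The plan is to follow the classical Hardy–Littlewood–Wiener template, now that estimate \eqref{scalmu} gives us a genuine doubling structure on $(K,R,\mu)$. Part (1), the weak-type $(1,1)$ inequality, is the heart of the matter, and everything else follows by interpolation or is trivial.

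First I would establish a Vitali-type covering lemma adapted to the metric $R$. Given a set $E$ of finite measure with $Mf>\alpha$ on $E$, each $x\in E$ admits a radius $\e_x>0$ with $\int_{B_{\e_x}(x)}|f|\,d\mu > \alpha\,\mu(B_{\e_x}(x))$. One can assume the $\e_x$ are uniformly bounded (since for large radii the average of $|f|$ tends to $0$), pass to a countable subfamily covering $E$, and then extract a pairwise disjoint subcollection $\{B_{\e_j}(x_j)\}$ such that the concatenation of the fivefold dilates $\{B_{5\e_j}(x_j)\}$ still covers $E$ — this is the standard $5r$-covering argument, which requires only that closed balls be compact (true here since $K$ is compact) and a greedy selection by decreasing radius. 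The one point needing care is that $R$-balls in $K$ need not behave like Euclidean balls under dilation; but \eqref{scalmu} gives $\mu(B_{5\e}(x)) \le A_2(5\e)^d \le (A_2/A_1)5^d\,\mu(B_\e(x))$, so the doubling constant is controlled and the covering lemma goes through verbatim.

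With the covering in hand, the weak-type bound is immediate:
\[
\mu(E) \le \sum_j \mu(B_{5\e_j}(x_j)) \le \frac{A_2 5^d}{A_1}\sum_j \mu(B_{\e_j}(x_j)) < \frac{A_2 5^d}{A_1 \alpha}\sum_j \int_{B_{\e_j}(x_j)}|f|\,d\mu \le \frac{A}{\alpha}\|f\|_{L^1(K,\mu)},
\]
with $A = A_2 5^d/A_1$, using disjointness in the last step; taking the supremum over such $E$ (a standard inner-regularity argument, valid since $\mu$ is a finite Borel measure on the compact metric space $K$) gives (1). Finiteness of $Mf$ a.e.\ for $f\in L^1$ then follows by letting $\alpha\to\infty$. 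For (2), the $L^\infty$ bound $\|Mf\|_\infty \le \|f\|_\infty$ is trivial from the definition, and the Marcinkiewicz interpolation theorem applied to the sublinear operator $M$, between the weak-type $(1,1)$ estimate and the strong-type $(\infty,\infty)$ estimate, yields boundedness on $L^p$ for every $1<p<\infty$.

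The main obstacle, such as it is, is purely a matter of verifying that the Euclidean covering machinery transfers: one must confirm that a Besicovitch/Vitali $5r$-lemma holds in $(K,R)$, which hinges on compactness of balls and on the controlled dilation of measure from \eqref{scalmu}. Since the theorem is explicitly flagged as ``standard'' with a reference to Stein, I expect the author's proof to be a brief remark that $(K,R,\mu)$ is a space of homogeneous type in the sense of Coifman–Weiss and to cite the corresponding general theorem, rather than to reproduce the covering argument; I would be content to do the same, perhaps spelling out the doubling constant from \eqref{scalmu} for completeness.
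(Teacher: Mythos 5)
Your proposal is correct and matches the paper's intent: the paper offers no proof at all, merely labeling the result ``standard'' and citing Stein \cite{Stein70} (with \cite{Saenz02} for the scaling estimate \eqref{scalmu}), and your Vitali covering plus Marcinkiewicz interpolation argument is exactly the standard machinery for spaces of homogeneous type that this citation invokes. The only cosmetic points are that the $5r$-covering lemma needs no compactness of balls (only uniformly bounded radii), and that \eqref{scalmu} is stated only for small $\e$, so for large radii one should note that compactness of $K$ and finiteness of $\mu$ still give a global doubling constant.
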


\begin{remark}
In the case where $K$ is the interval $I=[0,1]$, the functions $F_1$ and $F_2$
are the contractions $x\mapsto x/2$ and $x\mapsto x/2+1/2$, and the harmonic
structure corresponds to
$D=(\begin{smallmatrix}-1&1\\1&-1\end{smallmatrix})$ and $r_1=r_2=1/2$, we have
that the effective resistance metric $R$ is equal to the standard metric and
$\mu$ is the Lebesgue measure in $I$. In such case, \eqref{maxfunction} is the 
Hardy-Littlewood maximal operator and Theorem \ref{maxthm} is the classical 
maximal function theorem. 
\end{remark}

\begin{remark}
It is not hard to see that, as in the classical case, Theorem \ref{maxthm} is
also true for finite Borel measures on $K$. Precisely, if $\nu$ is a finite
Borel measure and we define the function $M\nu$ as
\[
M\nu(x) = \sup_{\e>0}\frac{1}{\mu\big(B_\e(x)\big)}\int_{B_\e(x)}|d\nu|,
\]
then 
\[
\mu\big(\{x\in K: M\nu(x)>\alpha\}\big) \le\frac{A}{\alpha}||\nu||.
\]
(Cf. \cite[Chapter III, 4.1]{Stein70}.)
\end{remark}

\subsection{Laplacians}\label{Laplacians}

Let $\Delta$ be the Laplacian associated with $(D,\mathbf r)$ and
$\mu$. We denote its domain by $\D$. We also consider the sets 
\[ \D_D = \{u\in\D:u|_{V_0}=0\}\quad\text{ and }\quad
\D_N =  \{ u \in \D: du = 0 \text{ on } V_0\},
\]
where $df(p)$ is the Neumann derivative of $f$ at the boundary point $p$
\cite{Kigami}. One can see then that $H_D$ is the Friedrich extension of
$-\Delta$ on $\D_D$, while $H_N$ is the Friedrich extension of $-\Delta$ on
$\D_N$ (see \cite[Section 3.7]{Kigami}).

For convenience, as in \cite{Kigami}, will denote by $b$ either $D$ or $N$;
so, for instance, $H_b$ will denote the operator $H_D$ or $H_N$, respectively.

\subsection{Dirichlet and Neumann eigenfunctions}\label{Dir-eigen}

Consider the set $E_b(\lambda) = \{\phi\in\D_b: \Delta \phi = -\lambda\phi\}$.
If $\dim E_D(\lambda) \not=0$, $\lambda$ is called a Dirichlet eigenvalue, and
the collection of such $\lambda$ is called the Dirichlet spectrum of $\Delta$.
If $\dim E_N(\lambda) \not= 0$, then $\lambda$ is called a Neumann eigenvalue,
and the collection of such $\lambda$ is called the Neumann spectrum of
$\Delta$. Both the Dirichlet and Neumann spectra of $\Delta$ are subsets of
$[0,\infty)$, and there are $\lambda_n^b$ and $\phi^b_n\in E_b(\la_n^b)$ such
that
\[
0 \le \lambda_1^b \le \lambda_2^b \le \ldots
\]
and $\{\phi_n^b:n\ge1\}$ is a complete orthonormal system for $L^2(K,\mu)$.
Observe that $\lambda^D_1 > 0$. The following theorem is due to Kigami and
Lapidus \cite{KL93}.

\begin{theorem}\label{spectral}
If $\rho_b(x) = \sum_{\la\le x}\dim E_b(\la)$,
\begin{equation}\label{weyl}
0 < \liminf_{x\to\infty}\frac{\rho_b(x)}{x^{d/(d+1)}} \le
\limsup_{x\to\infty}\frac{\rho_b(x)}{x^{d/(d+1)}} < \infty.
\end{equation}
\end{theorem}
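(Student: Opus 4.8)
The plan is to deduce the two-sided Weyl asymptotic \eqref{weyl} from the known spectral asymptotics of the associated Dirichlet form on the fractal, using the self-similar decomposition of $K$ together with a Dirichlet–Neumann bracketing argument. Write $N_b(x)=\rho_b(x)$ for the eigenvalue counting function of $-H_b$. The key structural input is that $(\dir,\F)$ decomposes over the cells $K_w$, $w\in W_m$: a function on $K$ restricted to the cells $K_w$ decouples the Dirichlet form, at the cost of the resistance scaling factor $1/r_w$, while the measure scales by $\mu_w=(r_i)^d$-products. Concretely, if we impose Dirichlet conditions on all of $V_m$, the form on $K$ becomes an orthogonal sum of copies of the Dirichlet form on $K$ rescaled: the $w$-th summand has eigenvalues $\la/(r_w\mu_w)$ times the Dirichlet eigenvalues on $K$. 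Since the measure is chosen with $\mu_i=(r_i)^d$, we get $r_w\mu_w=(r_w)^{d+1}$, so each cell contributes a copy of the spectrum dilated by $(r_w)^{-(d+1)}$.

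First I would set up the two bracketing inequalities. Restricting to $\F_0$-type spaces on each $K_w$ (Dirichlet on $V_m$) can only increase eigenvalues, giving
\[
N_b(x)\ \ge\ \sum_{w\in W_m} N_D\bigl((r_w)^{d+1}x\bigr),
\]
while relaxing to the Neumann form on each cell (which is a superspace of $\F$ or $\F_0$) gives the reverse bound with $N_N$ in place of $N_D$ and an additive error accounting for the finitely many matching conditions across $V_m\setminus V_0$ — this error is bounded by a constant times $|W_m|=|S|^m$, independent of $x$. Combining these with the elementary fact $0\le N_D(x)\le N_N(x)\le N_D(x)+C$ (again because the finite-dimensional boundary space $V_0$ forces the counting functions to differ by at most $\#V_0$), one obtains a renewal-type inequality
\[
\sum_{w\in W_m} N_b\bigl((r_w)^{d+1}x\bigr)\ -\ C|S|^m\ \le\ N_b(x)\ \le\ \sum_{w\in W_m} N_b\bigl((r_w)^{d+1}x\bigr)\ +\ C|S|^m .
\]

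Next I would extract the exponent. Setting $f(x)=N_b(x)$ and using the defining relation $\sum_i (r_i)^d=1$, one checks that the homogeneity degree forced by $\sum_{w\in W_m}((r_w)^{d+1})^\gamma = \bigl(\sum_i (r_i)^{d(1+1/d)\cdot\gamma\cdot\frac{d+1}{d}}\bigr)$... more cleanly: the one-step version ($m=1$) is $N_b(x)\asymp\sum_{i\in S}N_b((r_i)^{d+1}x)+O(1)$, and the scaling invariant of this renewal equation is the $\gamma$ with $\sum_i (r_i)^{(d+1)\gamma}=1$, i.e. $(d+1)\gamma=d$, so $\gamma=d/(d+1)$. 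A standard renewal/subadditivity argument (pass to the variable $t=\log x$, observe $g(t)=e^{-\gamma t}N_b(e^t)$ satisfies an almost-periodic renewal relation, and bound $\limsup$ and $\liminf$ by monotonicity of $N_b$) then yields
\[
0<\liminf_{x\to\infty}\frac{N_b(x)}{x^{d/(d+1)}}\le\limsup_{x\to\infty}\frac{N_b(x)}{x^{d/(d+1)}}<\infty,
\]
where positivity of the $\liminf$ comes from the lower bracketing bound seeded by $N_D(x_0)\ge 1$ for $x_0>\la_1^D$, and finiteness of the $\limsup$ from iterating the upper bound down to scale where $(r_w)^{d+1}x$ is below $\la_1^D$ so the sum vanishes, leaving only the accumulated $\sum C|S|^m\asymp x^{d/(d+1)}$ error.

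The main obstacle is making the bracketing estimate across the internal boundary $V_m$ clean: one must verify that imposing/removing the finitely many constraints at the junction points $V_m\setminus V_0$ changes the counting function by $O(|S|^m)$ uniformly in $x$, which relies on the p.c.f. hypothesis (finiteness of $V_0$, hence of each $F_w(V_0)$) and on the fact that the Dirichlet form is \emph{local} and \emph{regular} so that the cellular decomposition $\dir(f,g)=\sum_{w\in W_m} r_w^{-1}\dir(f\circ F_w,g\circ F_w)$ holds exactly — both of which are in force here. A secondary technical point is the precise form of the renewal theorem when the ratios $r_i$ are not all equal (the non-lattice versus lattice dichotomy), but since only the two-sided bound \eqref{weyl} is claimed — not an exact asymptotic constant — the crude monotone sandwiching suffices and the arithmetic nature of $\{\log r_i\}$ is irrelevant.
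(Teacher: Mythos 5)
The first thing to note is that the paper does not prove Theorem \ref{spectral} at all: it is quoted from Kigami and Lapidus \cite{KL93}, so the only comparison available is with the argument of that reference, and your overall strategy — Dirichlet--Neumann bracketing over the cellular decomposition, eigenvalue scaling by $(r_w\mu_w)^{-1}=r_w^{-(d+1)}$, interface corrections of size $\#V_m\lesssim|S|^m$, and the exponent $d/(d+1)$ determined by $\sum_i r_i^{(d+1)\gamma}=1$ together with $\sum_i r_i^d=1$ — is indeed the same strategy as the cited source. The bracketing set-up in your first paragraph is sound.

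The gap is in the step that extracts the two-sided bound. Your justification of the $\limsup$ iterates the one-step inequality to a \emph{fixed} word length $m$ and claims the accumulated error is $\sum_j C|S|^j\asymp x^{d/(d+1)}$; this is correct only when all the $r_i$ are equal. In general, to make every cell of $W_m$ sub-threshold you must take $m\approx\log x/\bigl((d+1)\log(1/r_{\max})\bigr)$, and then $|S|^m\approx x^{\log|S|/((d+1)\log(1/r_{\max}))}$; since $|S|\,r_{\max}^d\ge\sum_i r_i^d=1$, this exponent is $\ge d/(d+1)$, with equality precisely when all $r_i$ coincide, so for unequal weights the error term swamps the bound you want and the argument yields only a weaker exponent. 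The lower bound has the symmetric defect: forcing every cell of a fixed level $W_m$ above threshold requires $m\approx\log x/\bigl((d+1)\log(1/r_{\min})\bigr)$, giving only $|S|^m\approx x^{\log|S|/((d+1)\log(1/r_{\min}))}$ with exponent $\le d/(d+1)$ (since $|S|\,r_{\min}^d\le1$), again not sharp in general. The missing idea — which is exactly what \cite{KL93} and \cite[Section 4.1]{Kigami} supply — is to iterate not over $W_m$ but over the scale-adapted partitions $\Lambda_s=\{w\in W_*: r_w\le s<r_{w_1\cdots w_{|w|-1}}\}$, in which every cell has comparable scale $r_{\min}s<r_w\le s$ and the identity $\sum_{w\in\Lambda_s}r_w^d=1$ forces $\#\Lambda_s\asymp s^{-d}$; choosing $s\asymp x^{-1/(d+1)}$ then gives both the upper bound (one bracketing step with error $\lesssim\#\Lambda_s\asymp x^{d/(d+1)}$) and the lower bound ($\rho_b(x)\ge\#\Lambda_s\gtrsim x^{d/(d+1)}$ once each rescaled cell problem contributes at least one eigenvalue). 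Alternatively one can make the renewal-theorem route you allude to rigorous, but then boundedness of the infinitely iterated error needs uniform control of increments of the renewal measure; the ``crude monotone sandwiching'' you fall back on is precisely the fixed-depth computation above, and it does not close when the $r_i$ are distinct.
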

Equation \eqref{weyl} is the analogous to Weyl's formula which counts the
eigenvalues of the Laplacian on a domain in $\R^n$. We will also make use of
the following property:
\begin{enumerate}
\item[*] There exists a constant $C>0$ such that
\begin{equation}\label{est}
||\phi||_\infty \le C\la^\frac{d}{2(d+1)} ||\phi||_2,\qquad\phi\in E_b(\la).
\end{equation}
\end{enumerate}
A proof of this can be found in \cite[Section 4.5]{Kigami}.

\section{The heat and Poisson kernels}\label{kernels-sect}

\subsection{Heat kernel}

For $b=D$ or $N$, the Dirichlet (respectively Neumann) heat kernel is the
function $H^b:\R_+\times K\times K\to\C$ defined by
\begin{equation}\label{heat}
H^b(t,x,y) = \sum_{n=1}^\infty e^{-\lambda_n^b t}\phi_n^b(x)\phi^b_n(y).
\end{equation}
Although the right-hand sum of \eqref{heat} is defined only formally, it is not
hard to prove that it converges uniformly on $[T,\infty)\times K\times K$ for
any $T>0$, which follows from the results in Section \ref{Dir-eigen} (cf.
Section \ref{Poisson-section}). Moreover, $H^b$ is nonnegative, continuous,
defines a fundamental solution to the heat equation \cite[Section 5.1]{Kigami},
and
\begin{equation}\label{heat-sg}
\int_K H^b(t,x,z)H^b(s,z,y) d\mu(z) = H^b(t+s,x,y).
\end{equation}
These properties imply that the operator $f \mapsto \Heat_t^b f$, where
\[
\Heat^b_t f(x) = \int_K H^b(t,x,y) f(y) d\mu(y),
\]
defined for $t>0$ and integrable functions $f$ on $K$, is a strongly continuous
semigroup on $L^2(K,\mu)$ whose generator is given by $-H_b$. In fact, as
$H^b$ is continuous and $K$ is compact, we have that $\Heat^b_t$ is bounded from
$L^p(K,\mu)$ to $C(K)$ for any $t>0$ and $1\le p\le \infty$ and, moreover,
\[
\Heat^b_t(L^p(K,\mu)) \subset \D_b.
\]
If $u(t,x)$ is defined on $\R_+\times K$ as $u(t,x) = \Heat^b_t f(x)$, then
\[
\frac{\partial u(t,x)}{\partial t} = \Delta u(t,x),
\]
so $H^b$ is the fundamental solution of the heat equation on $K$. For the
details of these facts, see \cite[Chapter 5]{Kigami}. The following result is
also proved in \cite[Proposition 5.2.6]{Kigami}.

\begin{proposition}\label{identHeat}
\begin{enumerate}
\item For $f\in C(K)$, $||\Heat^N_t f - f||_{L^\infty(K,d\mu)} \to 0$ as 
$t\to 0$.
\item Let $f\in C(K)$, with $f|_{V_0} \equiv 0$. Then 
$||\Heat^D_t f - f||_{L^\infty(K,d\mu)} \to 0$ as $t\to 0$.
\end{enumerate}

\end{proposition}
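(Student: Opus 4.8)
The plan is to prove both statements by the same approach, exploiting the spectral expansion of the heat kernel together with the density of the eigenfunction span. First I would reduce the problem to showing $\Heat_t^b f \to f$ uniformly for $f$ in a dense subclass, and then use a uniform bound on the operator norm $\|\Heat_t^b\|_{C(K)\to C(K)}$ to pass to the general case by an $\e/3$ argument. The operator norm bound is immediate: since $H^b(t,x,y)\ge 0$ and $\Heat_t^b 1 \le 1$ (for $b=N$ this is equality, since constants are Neumann-harmonic; for $b=D$ one has $\Heat_t^D 1 \le 1$ because $1$ dominates the solution with zero boundary data, or simply because $\int_K H^D(t,x,y)\,d\mu(y)\le 1$ by positivity and comparison with the Neumann kernel), we get $\|\Heat_t^b f\|_\infty \le \|f\|_\infty$ for all $t>0$. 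Thus it suffices to establish the claimed convergence on a dense set.

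For the dense set I would take finite linear combinations of eigenfunctions $\phi_n^b$. If $f=\sum_{n=1}^N c_n \phi_n^b$, then $\Heat_t^b f = \sum_{n=1}^N c_n e^{-\lambda_n^b t}\phi_n^b$, and since this is a finite sum, $\|\Heat_t^b f - f\|_\infty \le \sum_{n=1}^N |c_n|\,(1-e^{-\lambda_n^b t})\,\|\phi_n^b\|_\infty \to 0$ as $t\to 0$. So the only remaining point is density of the eigenfunction span: in case (1), $\{\phi_n^N\}$ is a complete orthonormal system in $L^2(K,\mu)$, but I need density in $(C(K),\|\cdot\|_\infty)$, which is stronger. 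In case (2), I need density of $\mathrm{span}\{\phi_n^D\}$ in $\{g\in C(K): g|_{V_0}=0\}$ under the sup norm.

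The main obstacle is precisely this upgrade from $L^2$-density to uniform density. The cleanest route is to use the smoothing already recorded: for fixed $s>0$, $\Heat_s^b$ maps $L^2(K,\mu)$ into $C(K)$ boundedly and $\Heat_s^b g = \sum_n e^{-\lambda_n^b s}\langle g,\phi_n^b\rangle \phi_n^b$, and this series converges in $C(K)$ — this is the content of the uniform convergence of \eqref{heat} on $[T,\infty)\times K\times K$, which uses the Weyl estimate of Theorem \ref{spectral} and the eigenfunction bound \eqref{est}. Hence, for any $g\in C(K)$, the partial sums of $\Heat_s^b g$ lie in $\mathrm{span}\{\phi_n^b\}$ and converge uniformly to $\Heat_s^b g$; letting $s\to 0$ along a countable sequence and invoking the (to-be-proved) statement itself would be circular, so instead I argue directly: given $g\in C(K)$ and $\e>0$, by strong continuity of the semigroup on $L^2$ combined with the $L^2$-to-$C(K)$ boundedness of $\Heat_{s/2}^b$, one shows $\|\Heat_s^b g - g\|_\infty$ is small for small $s$ when $g$ is already in the range of some $\Heat_{s_0}^b$; and the ranges $\bigcup_{s_0>0}\Heat_{s_0}^b(C(K))$ are dense in the appropriate space because $\Heat_{s_0}^b g \to g$ in $L^2$ hence (being uniformly bounded in $C(K)$ and equicontinuous, the latter from the smoothing) a subsequence converges uniformly. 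Assembling these pieces, one concludes the eigenfunction span is uniformly dense in $C(K)$ (case 1) and, after noting that $\phi_n^D|_{V_0}=0$ and that $\Heat_{s_0}^D$ preserves the vanishing boundary condition, in $\{g\in C(K):g|_{V_0}=0\}$ (case 2), completing the proof. An alternative, perhaps shorter, is to cite \cite[Proposition 5.2.6]{Kigami} directly, since the statement is exactly that proposition; but the self-contained argument above is the one I would write out.
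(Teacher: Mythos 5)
The skeleton of your argument (positivity of the kernel, $\|\Heat^b_t\|_{C(K)\to C(K)}\le 1$, convergence on a dense class, then an $\e/3$ argument) is sound, and your observation that $\|\Heat^b_s g-g\|_\infty\to 0$ for $g$ in the range of some $\Heat^b_{s_0}$ is correct: writing $g=\Heat^b_{s_0}h$ one has $\Heat^b_s g-g=\Heat^b_{s_0/2}\bigl(\Heat^b_s h'-h'\bigr)$ with $h'=\Heat^b_{s_0/2}h$, and the $L^2\to C(K)$ bound at time $s_0/2$ together with strong $L^2$-continuity does the job. But there is a genuine gap exactly where you yourself locate the difficulty: the upgrade from $L^2$-density to uniform density. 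Your claim that $\bigcup_{s_0>0}\Heat^b_{s_0}(C(K))$ is uniformly dense rests on the assertion that the family $\{\Heat^b_{s_0}g\}_{s_0>0}$ is equicontinuous ``from the smoothing''. This is unjustified and in general false as $s_0\to 0$: the natural modulus-of-continuity bound $|v(x)-v(y)|^2\le R(x,y)\,\dir(v,v)$ applied to $v=\Heat^b_{s_0}g$ involves $\dir(\Heat^b_{s_0}g,\Heat^b_{s_0}g)=\sum_n\lambda^b_n e^{-2\lambda^b_n s_0}|\langle g,\phi^b_n\rangle|^2$, which blows up as $s_0\to 0$ for generic $g\in C(K)$. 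Worse, uniform boundedness plus equicontinuity plus $L^2$-convergence of $\Heat^b_{s_0}g$ to $g$ would already yield uniform convergence along a sequence $s_0\to 0$, i.e. essentially the proposition itself; at that point the argument is circular rather than a reduction.

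The standard way to close the gap goes through $\F$ rather than through the ranges of the semigroup: by Kumagai's theorem quoted in the preliminaries, $(\dir,\F)$ and $(\dir,\F_0)$ are regular Dirichlet forms, so $\F$ is dense in $(C(K),\|\cdot\|_\infty)$ and $\F_0$ is dense in $\{g\in C(K):g|_{V_0}=0\}$; and for $u\in\F$ (resp. $\F_0$) the eigenfunction expansion converges in the norm $\bigl(\dir(\cdot,\cdot)+\|\cdot\|_{L^2}^2\bigr)^{1/2}$, hence uniformly by the resistance estimate $|u(x)-u(y)|^2\le R(x,y)\,\dir(u,u)$ and the boundedness of $R$. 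This gives the uniform density of the eigenfunction span that your finite-sum computation needs, and your contraction estimate then finishes the proof. Note finally that the paper does not reprove this statement at all: it is quoted directly as \cite[Proposition 5.2.6]{Kigami}, which is the alternative you mention at the end; if you want the self-contained version, you must supply the density step along the lines above.
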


Thus we observe that the heat kernel acts as an \emph{approximation to the
identity} for continuous functions.


\subsection{Poisson kernel}\label{Poisson-section}

We define the \emph{Dirichlet} (respectively \emph{Neumann}) \emph{Poisson
kernel} $P^b:\R_+\times K\times K\to \C$ by
\begin{equation}\label{poisson}
P^b(t,x,y) = \sum_{n=1}^\infty e^{-\sqrt{\lambda^b_n} t} 
\phi^b_n(x)\phi^b_n(y).
\end{equation}
As above, we first take the series in \eqref{poisson} formally. However, the
series converges uniformly on $[T,\infty)\times K \times K$ for any $T>0$,
which follows from the following two observations.
\begin{enumerate}
\item There exist $c_1,c_2>0$ such that
\begin{equation}\label{est-l}
c_1 n^{(d+1)/d} \le \lambda^b_n \le c_2 n^{(d+1)/d};
\end{equation}
\item For $\alpha,\beta,\gamma,T>0$, 
$\sum_{n\ge 1} n^\alpha e^{-\gamma n^\beta t}$ converges uniformly for
$t\in[T,\infty)$.
\end{enumerate}
The first follows from Theorem \ref{spectral}, while the second is
straightforward from the well-known properties of the exponential function.
Now, from these and equation \eqref{est} it follows that 
\[
|e^{-\sqrt{\lambda^b_n} t} \phi^b_n(x)\phi^b_n(y)| 
\lesssim e^{-\sqrt{c_1}tn^{(d+1)/(2d)}} n
\]
uniformly in $K\times K$, and thus the series in \eqref{poisson} converges
uniformly on $[T,\infty)\times K\times K$ for any $T>0$.

The following identity, based on the \emph{principle of subordination} 
\cite[Section III.2]{Stein70}, will be useful to study the properties of
$P^b(t,x,y)$.

\begin{proposition}\label{subord}
For $(t,x,y)\in\R_+\times K\times K$,
\[
P^b(t,x,y) = \frac{t}{2\sqrt \pi} \int_0^\infty e^{-t^2/4s} H^b(s,x,y)
\frac{ds}{s^{3/2}},
\]
where $H^b$ is the heat kernel on $K$.
\end{proposition}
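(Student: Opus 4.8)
The plan is to prove the subordination identity by starting from the classical scalar identity
\[
e^{-a} = \frac{1}{\sqrt\pi}\int_0^\infty \frac{e^{-r}}{\sqrt r}\, e^{-a^2/4r}\,dr,\qquad a>0,
\]
which is the standard formula underlying the principle of subordination (see \cite[Section III.2]{Stein70}). Applying this with $a = \sqrt{\lambda_n^b}\,t$ and then substituting $r = t^2/4s$ (so that $dr = -(t^2/4s^2)\,ds$ and $a^2/4r = \lambda_n^b s$) converts it into
\[
e^{-\sqrt{\lambda_n^b}\,t} = \frac{t}{2\sqrt\pi}\int_0^\infty e^{-t^2/4s}\, e^{-\lambda_n^b s}\,\frac{ds}{s^{3/2}}.
\]
Multiplying by $\phi_n^b(x)\phi_n^b(y)$ and summing over $n$ then formally yields the claimed identity, since $\sum_n e^{-\lambda_n^b s}\phi_n^b(x)\phi_n^b(y) = H^b(s,x,y)$ by the definition \eqref{heat} of the heat kernel.

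The substantive work is therefore in justifying the interchange of the sum over $n$ with the integral over $s$. First I would fix $T>0$ and $(t,x,y)$ with $t\ge T$, and split the $s$-integral at $s=1$ (the value is immaterial). On the range $s\ge 1$ the uniform convergence of \eqref{heat} on $[1,\infty)\times K\times K$ already established in the excerpt gives an absolutely convergent double sum/integral, so Tonelli/Fubini applies directly after noting that each term is nonnegative (recall $H^b\ge 0$ and we may first treat the absolute-value version). On the range $0<s<1$ the factor $e^{-t^2/4s}$ decays faster than any power of $s$ as $s\to 0$, which will absorb whatever growth the heat kernel has; concretely, using \eqref{est-l} and \eqref{est} as in the uniform-convergence argument for \eqref{poisson}, each term $\phi_n^b(x)\phi_n^b(y)e^{-\lambda_n^b s}$ is bounded by $C n^{2}e^{-c_1 n^{(d+1)/d}s}$ (crudely, dropping the exponential), and then
\[
\int_0^1 e^{-t^2/4s}\,\frac{ds}{s^{3/2}}\sum_{n\ge1} C n^2 e^{-c_1 n^{(d+1)/d}s}
\]
is finite once one checks the elementary fact that $\int_0^1 e^{-t^2/4s}s^{-k}\,ds<\infty$ for every $k$ and every fixed $t>0$, and that the $n$-sum, after integrating termwise, produces a convergent series. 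With absolute convergence of the double integral in hand, Fubini's theorem licenses the interchange and the proof is complete.

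The main obstacle I expect is precisely this small-$s$ estimate: the heat kernel $H^b(s,x,y)$ is only defined as a formally convergent series there and one has no a priori uniform bound on it near $s=0$, so one must extract enough decay from $e^{-t^2/4s}$ to dominate the eigenfunction-growth estimate \eqref{est} uniformly in $x,y\in K$. This is not deep, but it is the step where the quantitative inputs \eqref{est-l}, \eqref{est}, and the superexponential decay of $e^{-t^2/4s}$ as $s\to0^+$ must be combined carefully; everything else is a change of variables and a routine Fubini argument.
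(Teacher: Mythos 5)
Your proposal is correct and follows essentially the same route as the paper: both rest on the scalar identity $\int_0^\infty e^{-t^2/4s}e^{-\beta^2 s}\,s^{-3/2}ds = \frac{2\sqrt\pi}{t}e^{-\beta t}$ applied to $\beta=\sqrt{\lambda_n^b}$, together with a Fubini/dominated-convergence interchange justified by the estimates \eqref{est} and \eqref{est-l}. Your split at $s=1$ with termwise Tonelli (after passing to absolute values, since the individual summands $\phi_n^b(x)\phi_n^b(y)e^{-\lambda_n^b s}$ are not themselves nonnegative, as you note) is just a repackaging of the paper's uniform small-$s$ bound $H^b(s,x,y)\lesssim s^{-(2d/(d+1)+1)}$ followed by dominated convergence.
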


\begin{proof}
By the estimates \eqref{est} and \eqref{est-l}, we see that
\[
H^b(t,x,y) \le \sum_{n=1}^\infty e^{-\lambda^b_n t} (\lambda^b_n)^{d/(d+1)} \le 
\sum_{n=1}^\infty n e^{-c_1 n^{(d+1)/d} t},
\]
uniformly on $K\times K$. Let $\alpha = \dfrac{d}{(d+1)}$. Since,
$e^{-c_1 n^{1/\alpha} t} \lesssim \dfrac{1}{(n^{1/\alpha} t)^{2\alpha + 1}}$,
we see that the series can be estimated by
\[
\sum_{n=1}^\infty n e^{-c_1 n^{1/\alpha} t} \lesssim \frac{1}{t^{2\alpha+1}}.
\]
Now the integral $\displaystyle\int_0^\infty e^{-t^2/4s}
\frac{1}{s^{2\alpha+1}}\frac{ds}{s^{3/2}}$ converges for $t>0$, so by the
dominated convergence theorem
\begin{equation*}
\begin{split}
\int_0^\infty e^{-t^2/4s} H^b(s,x,y) \frac{ds}{s^{3/2}} &= 
\sum_{n=1}^\infty \phi^b_n(x)\phi^b_n(y)
\int_0^\infty e^{-t^2/4s} e^{-\lambda^b_n s} \frac{ds}{s^{3/2}}\\ &=
\sum_{n=1}^\infty \phi^b_n(x)\phi^b_n(y)
\frac{2\sqrt{\pi}}{t} e^{-\sqrt{\lambda^b_n} t} =
\frac{2\sqrt{\pi}}{t} P^b(t,x,y),
\end{split}
\end{equation*}
where we have used the identity
\[
\int_0^\infty e^{-t^2/4s} e^{-\beta^2 s} \frac{ds}{s^{3/2}} = 
\frac{2\sqrt{\pi}}{t} e^{-\beta t}
\]
for $\beta>0$ \cite[Section III.2]{Stein70}.
\end{proof}

Proposition \ref{subord} allows us to conclude the following properties 
of $P^b(t,x,y)$, analogously to those of $H^b(t,x,y)$.

\begin{corollary}\label{PoissonProp}
The Poisson kernel satisfies the following properties.
\begin{enumerate}
\item $P^b$ is nonnegative and continuous;
\item For $(t,x)\in\R_+\times K$, $P^b(t,x,\cdot)\in\D_b$;
\item For $(x,y)\in K\times K$, $P^b(\cdot,x,y)\in C^2(\R_+)$;
\item For $(t,x,y)\in\R_+\times K \times K$, 
\begin{equation}\label{Poisson-fund}
\frac{\partial^2 P^b(t,x,y)}{\partial t^2} + (\Delta P^b(t,x,\cdot))(y) = 0;
\end{equation}
and
\item For $t,s\in\R_+$, $x,y\in K$, 
\begin{equation}\label{Poisson-sg}
\int_K P^b(t,x,z)P^b(s,z,y) d\mu(z) = P^b(t+s,x,y).
\end{equation}
\end{enumerate}
\end{corollary}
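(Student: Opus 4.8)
The plan is to derive every property of $P^b$ from the subordination identity in Proposition \ref{subord} together with the corresponding property of the heat kernel $H^b$. The integral representation
\[
P^b(t,x,y) = \frac{t}{2\sqrt\pi}\int_0^\infty e^{-t^2/4s} H^b(s,x,y)\frac{ds}{s^{3/2}}
\]
converts statements about $H^b$ into statements about $P^b$, provided one can justify differentiating under the integral sign and interchanging integrations; the uniform decay estimate $H^b(s,x,y)\lesssim s^{-(2\alpha+1)}$ (with $\alpha=d/(d+1)$) established in the proof of Proposition \ref{subord}, valid uniformly on $K\times K$, will be the workhorse that makes all such interchanges legitimate on compact $t$-intervals away from $0$.

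First, property (1): nonnegativity is immediate since $H^b\ge 0$ and the kernel $\frac{t}{2\sqrt\pi}e^{-t^2/4s}s^{-3/2}$ is positive for $t,s>0$; continuity on $\R_+\times K\times K$ follows because $H^b$ is continuous there and the integrand is dominated, locally uniformly in $(t,x,y)$, by an integrable function of $s$ — split the integral at $s=1$, using the $s^{-(2\alpha+1)}$ bound for small $s$ (where $e^{-t^2/4s}$ provides extra decay anyway for $t$ bounded below) and the uniform convergence of \eqref{heat} on $[1,\infty)\times K\times K$ for large $s$. Property (3), that $P^b(\cdot,x,y)\in C^2(\R_+)$, is handled by differentiating the series \eqref{poisson} termwise in $t$ twice: each differentiation brings down a factor $\sqrt{\lambda_n^b}$, and the resulting series $\sum_n \lambda_n^b e^{-\sqrt{\lambda_n^b}t}\phi_n^b(x)\phi_n^b(y)$ still converges uniformly on $[T,\infty)\times K\times K$ by the same estimate used to establish uniform convergence of \eqref{poisson} (the polynomial factor $\lambda_n^b\sim n^{(d+1)/d}$ is absorbed by the stretched-exponential $e^{-\sqrt{c_1}\,t\,n^{(d+1)/(2d)}}$). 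The termwise-differentiated series also shows, comparing with $\Delta$ applied termwise in the $y$-variable (legitimate since $\phi_n^b\in\D_b$ and, after the same uniform bound, the series $\sum_n \lambda_n^b e^{-\sqrt{\lambda_n^b}t}\phi_n^b(x)\phi_n^b(y)$ converges in a topology strong enough to commute with the closed operator $\Delta$), that $\partial_t^2 P^b = \lambda_n^b$-weighted series $= -\Delta P^b(t,x,\cdot)$, giving property (4). Property (2), $P^b(t,x,\cdot)\in\D_b$, follows in the same breath: the series $\sum_n e^{-\sqrt{\lambda_n^b}t}\phi_n^b(x)\phi_n^b(y)$ lies in $\dom(\Delta)$ because applying $\Delta$ termwise produces the uniformly (indeed $L^2$-) convergent series just described, so the partial sums converge in graph norm; moreover each $\phi_n^b$ satisfies the boundary condition defining $\D_b$ (vanishing on $V_0$ for $b=D$, vanishing Neumann derivative for $b=N$), and these conditions are preserved under the limit.

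Finally, property (5), the semigroup identity \eqref{Poisson-sg}: the cleanest route is the spectral one. Using uniform convergence of \eqref{poisson} on $[T,\infty)\times K\times K$ one may integrate the product series termwise,
\[
\int_K P^b(t,x,z)P^b(s,z,y)\,d\mu(z) = \sum_{m,n} e^{-\sqrt{\lambda_m^b}t}e^{-\sqrt{\lambda_n^b}s}\phi_m^b(x)\phi_n^b(y)\int_K \phi_m^b(z)\phi_n^b(z)\,d\mu(z),
\]
and orthonormality of $\{\phi_n^b\}$ collapses the double sum to $\sum_n e^{-\sqrt{\lambda_n^b}(t+s)}\phi_n^b(x)\phi_n^b(y) = P^b(t+s,x,y)$. (Alternatively one could feed the heat semigroup identity \eqref{heat-sg} through the double subordination integral and reduce to the convolution identity for the one-dimensional subordinator kernels, but the spectral computation is shorter and avoids a Fubini argument.)

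I expect the only real obstacle to be the justification in properties (2) and (4) that $\Delta$ may be applied termwise to the series for $P^b(t,x,\cdot)$ — i.e.\ that $P^b(t,x,\cdot)$ genuinely lies in $\dom(\Delta)$ and not merely in the form domain. This is resolved by noting that $\Delta$ (equivalently $-H_b$) is a closed operator, that the termwise-differentiated series converges uniformly (hence in $L^2(K,\mu)$) on $[T,\infty)\times K\times K$ by estimates \eqref{est} and \eqref{est-l}, and that therefore the finite partial sums $\sum_{n\le N} e^{-\sqrt{\lambda_n^b}t}\phi_n^b(x)\phi_n^b(\cdot)$ — which manifestly lie in $\D_b$ — converge together with their Laplacians, so the limit lies in $\D_b$ with $\Delta$ acting termwise; combined with property (3) this yields \eqref{Poisson-fund}. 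Everything else is a routine dominated-convergence exercise riding on the decay estimate from Proposition \ref{subord}.
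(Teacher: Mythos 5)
Your argument is essentially the paper's for items (1)--(4): nonnegativity and continuity are read off from the subordination formula of Proposition \ref{subord} together with the decay of $H^b$, and (2)--(4) come from termwise manipulation of the eigenfunction series using \eqref{est} and \eqref{est-l}, exactly as in the text (the paper phrases membership in $\dom(H_b)$ via the spectral criterion $\sum_n|\lambda^b_n a_n|^2<\infty$, you phrase it via graph-norm convergence of the partial sums and closedness of $H_b$; these are interchangeable). The one genuine divergence is (5): you integrate the product of the two uniformly convergent series term by term and use orthonormality of $\{\phi^b_n\}$, whereas the paper pushes the heat semigroup identity \eqref{heat-sg} through a double subordination integral and invokes Fubini. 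Your route is legitimate (for fixed $t,s>0$ both series converge uniformly on $K$, so termwise integration of the product is justified) and is arguably shorter; the paper's route has the advantage of not touching the eigenfunction expansion at all once Proposition \ref{subord} is in hand, which is in keeping with its strategy of transferring heat-kernel facts wholesale.

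One soft spot to tighten in (2): for $b=N$ you assert that the boundary condition "vanishing Neumann derivative" is preserved under the limit of the partial sums. Unlike the Dirichlet condition, this does not follow from uniform convergence of the functions alone; the Neumann derivative $du(p)$ is not continuous under uniform convergence. It does follow here, but you should say why: either invoke the standard characterization (used implicitly by the paper) that $u\in\dom(H_b)$ with $H_bu$ continuous implies $u\in\D_b$, or argue via the Gauss--Green formula, $du(p)=\dir(u,\psi_p)+\int_K \psi_p\,\Delta u\,d\mu$ with $\psi_p$ harmonic and $\psi_p(q)=\delta_{pq}$ on $V_0$, noting that your partial sums converge together with their Laplacians (uniformly, hence in $L^2$ and in energy), so the Neumann derivatives converge as well. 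With that sentence added, the proof is complete.
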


\begin{proof}
(1) follows from the nonnegativity of $H^b(t,x,y)$ and the fact that the
integral in Proposition \ref{subord} converges absolutely.

To prove (2), observe that $\sum_{n\ge 1} a_n\phi^b_n \in \dom(H_b)$ if and
only if
\[
\sum_{n\ge 1} |\lambda^b_n a_n|^2 < \infty, 
\]
which clearly holds for $a_n = e^{-\sqrt{\lambda^b_n}t}\phi^b_n(x)$
for any given $t>0$ and $x\in K$, because of \eqref{est}. Then
$P^b(t,x,\cdot)\in\dom(H_b)$ and
\[
H_bP^b(t,x,\cdot) = \sum_{n=1}^\infty \lambda^b_n e^{-\sqrt{\lambda^b_n} t}
\phi^b_n(x)\phi^b_n.
\]
As above, one can verify that this series converges uniformly on 
$[T,\infty)\times K\times K$, and thus $H_bP^b(t,x,\cdot)\in C(K)$. Therefore
$P^b(t,x,\cdot)\in\D_b$.

Now fix $x,y\in K$ and set $f_n(t) = e^{-\sqrt{\lambda^b_n} t}
\phi^b_n(x)\phi^b_n(y)$.
Since 
\[
\sum_{n=1}^\infty f_n'(t) = 
-\sum_{n=1}^\infty \sqrt{\lambda^b_n}e^{-\sqrt{\lambda^b_n} t}
\phi^b_n(x)\phi^b_n(y)
\]
converges uniformly on $[T,\infty)$ for any $T>0$, $t\mapsto P^b(t,x,y)$ is
continuosly differentiable and
\[
\frac{\partial P^b(t,x,y)}{\partial t} = \sum_{n=1}^\infty f_n'(t).
\]
Now $f_n''(t) = \lambda^b_n e^{-\sqrt{\lambda^b_n} t} \phi^b_n(x)\phi^b_n(y)$,
so
$\sum_{n=1}^\infty f_n''(t)$ also converges absolutely on $[T,\infty)$ for any
$T>0$ and thus $t\mapsto P^b(t,x,y)$ is in $C^2(\R_+)$, which proves (3).

(4) follows from the fact that
\[
\begin{split}
\frac{\partial^2 P^b(t,x,y)}{\partial t^2} &= \sum_{n=1}^\infty f_n''(t) =
\sum_{n=1}^\infty \lambda^b_n e^{-\sqrt{\lambda^b_n} t} \phi^b_n(x)\phi^b_n(y)
\\
&= (H_bP^b(t,x,\cdot))(y) = - (\Delta P^b(t,x,\cdot))(y).
\end{split}
\]

For (5), it is sufficient to note that \eqref{Poisson-sg} follows from
\eqref{heat-sg},
Proposition \ref{subord} and Fubini's theorem.
\end{proof}

\subsection{Poisson semigroup}\label{Poisson-SG-section}

The results from the previous section lead us, analogously to the heat kernel,
to define the operators $f\mapsto \Pois^b_tf$ ($b=D$ or $N$) for each $t>0$ as
\begin{equation}\label{PoissonOper}
\Pois^b_t f(x) = \int_K P^b(t,x,y) f(y) d\mu(y),
\end{equation}
defined for integrable functions $f$ on $K$. The continuity of $P^b$ and the 
compactness of $K$ imply that $\Pois^b_t$ is bounded from $L^p(K,\mu)$ to
$C(K)$ for any $t>0$ and $1\le p \le \infty$. We also see that
$\Pois^b_t \circ \Pois^b_s = \Pois^b_{t+s}$, which follows from
\eqref{Poisson-sg}, so we have that $\{\Pois^b_t\}_{t>0}$ is a semigroup.

We in fact have the following theorem.

\begin{theorem}\label{Psemi}
Let $f\in L^p(K,\mu)$ and define, for $(t,x)\in K$,
\[
u(t,x) = \Pois^b_t f(x).
\]
\begin{enumerate}
\item For each $x\in K$, $u(\cdot,x)\in C^\infty(\R_+)$;
\item For each $t>0$, $u(t,\cdot)\in\D_b$; and
\item For each $(t,x)\in\R_+\times K$,
\[
\frac{\partial^2 u(t,x)}{\partial t^2} + \Delta u(t,x) = 0.
\]
\end{enumerate}
\end{theorem}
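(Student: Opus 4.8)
The strategy is to transfer the three properties of the Poisson kernel established in Corollary \ref{PoissonProp} to the operator $\Pois^b_t$ by differentiating under the integral sign, using the uniform convergence estimates already available. Throughout, the key technical fact is that all the relevant series representations (for $P^b$, for $\partial_t P^b$, for $\partial_t^2 P^b$, and more generally for $\partial_t^k P^b$) converge uniformly on $[T,\infty)\times K\times K$ for every $T>0$; this was shown in Section \ref{Poisson-section} for $P^b$ itself and in the proof of Corollary \ref{PoissonProp} for the first two $t$-derivatives, and the same argument — combining \eqref{est}, \eqref{est-l} and the uniform convergence of $\sum_n n^\alpha e^{-\gamma n^\beta t}$ — gives every higher derivative, since each differentiation in $t$ only pulls down a factor $(\sqrt{\lambda^b_n})^k \lesssim n^{k(d+1)/(2d)}$, which is absorbed by the exponential.

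First I would prove (1). Fix $x\in K$ and write $u(t,x) = \int_K P^b(t,x,y)f(y)\,d\mu(y)$. For $t$ in a compact subinterval $[T_1,T_2]\subset\R_+$, the series $\sum_n \partial_t^k\big(e^{-\sqrt{\lambda^b_n}t}\phi^b_n(x)\phi^b_n(y)\big)$ converges uniformly in $(t,y)$, so $y\mapsto \partial_t^k P^b(t,x,y)$ is continuous on $K$ and bounded uniformly for $t\in[T_1,T_2]$. Since $K$ is compact and $f\in L^p(K,\mu)\subset L^1(K,\mu)$, a standard differentiation-under-the-integral argument (dominating the difference quotients by the uniform bound on the next derivative, then applying dominated convergence) shows $u(\cdot,x)\in C^k([T_1,T_2])$ for every $k$, with
\[
\frac{\partial^k u(t,x)}{\partial t^k} = \int_K \frac{\partial^k P^b(t,x,y)}{\partial t^k}\, f(y)\, d\mu(y).
\]
Letting $T_1\to 0$ and $T_2\to\infty$ gives $u(\cdot,x)\in C^\infty(\R_+)$.

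Next, (2): for fixed $t>0$ we must show $u(t,\cdot)\in\D_b$, i.e. $u(t,\cdot)\in\dom(H_b)$ and $H_b u(t,\cdot)\in C(K)$. By Corollary \ref{PoissonProp}(2), $P^b(t,x,\cdot)\in\dom(H_b)$ with $H_b P^b(t,x,\cdot) = \sum_n \lambda^b_n e^{-\sqrt{\lambda^b_n}t}\phi^b_n(x)\phi^b_n$, and this series converges uniformly on $K\times K$ for $t$ bounded away from $0$. One then checks, using the closedness of $H_b$ together with the uniform convergence, that $H_b$ commutes with the integral $\int_K \,\cdot\, f(y)\,d\mu(y)$, so $u(t,\cdot)\in\dom(H_b)$ and $H_b u(t,\cdot)(x) = \int_K (H_b P^b(t,x,\cdot))(y)\,f(y)\,d\mu(y)$, which is continuous in $x$ by the same compactness argument as in (1); hence $u(t,\cdot)\in\D_b$. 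Finally (3) is immediate by combining the two displayed identities with \eqref{Poisson-fund}:
\[
\frac{\partial^2 u(t,x)}{\partial t^2} = \int_K \frac{\partial^2 P^b(t,x,y)}{\partial t^2}\,f(y)\,d\mu(y)
= -\int_K (\Delta P^b(t,x,\cdot))(y)\,f(y)\,d\mu(y) = -\Delta u(t,x).
\]

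The only real subtlety — hardly an obstacle given the groundwork — is justifying the interchange of $H_b$ (equivalently $\Delta$) with the integral against $f$; this is where one must invoke the closedness of the self-adjoint operator $H_b$ rather than a naive pointwise argument, since $\Delta$ is not an integral operator. Everything else reduces to the uniform-convergence estimates already in hand from Section \ref{Poisson-section} and the compactness of $K$.
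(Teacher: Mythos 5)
Your argument is correct in substance but follows a genuinely different route from the paper's. The paper never differentiates under the integral sign: it first converts the Poisson integral into the eigenfunction expansion $u(t,x)=\sum_{n\ge1} a_n e^{-\sqrt{\la^b_n}\,t}\phi^b_n(x)$ with $a_n=\int_K\phi^b_n f\,d\mu$ (justified by the bound $|a_n|\le C(\la^b_n)^{\frac{d}{2(d+1)}}\|f\|_{L^p}$ from \eqref{est} together with \eqref{est-l} and dominated convergence), and then reads off all three statements from this single series: (1) by termwise differentiation in $t$ with uniform convergence of $\sum_n\psi_n^{(k)}$, (2) by the spectral characterization of $\dom(H_b)$, namely $\sum_n|\la^b_n a_n e^{-\sqrt{\la^b_n}t}|^2<\infty$, plus uniform convergence of $\sum_n a_n\la^b_n e^{-\sqrt{\la^b_n}t}\phi^b_n$ to get $H_bu(t,\cdot)\in C(K)$, and (3) by equating the two resulting series. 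You instead keep $u(t,x)=\int_K P^b(t,x,y)f(y)\,d\mu(y)$ in kernel form and push $\partial_t^k$ and $H_b$ through the integral. For (1) this is fine as you describe it. For (2), however, the crucial step --- commuting the unbounded operator $H_b$ with $\int_K\,\cdot\,f\,d\mu$ --- is asserted rather than carried out: to make it rigorous you need to view $y\mapsto P^b(t,\cdot,y)f(y)$ as an $L^2(K,\mu)$-valued Bochner integrable map with values in $\dom(H_b)$, check that $y\mapsto H_b P^b(t,\cdot,y)f(y)$ is likewise Bochner integrable (both facts do follow from the uniform convergence estimates), and invoke the closed-graph/Hille-type theorem for closed operators; you should also note explicitly that you are silently using the symmetry $P^b(t,x,y)=P^b(t,y,x)$ when you pass between $H_b$ acting in the first variable (which is what membership of $u(t,\cdot)$ in $\D_b$ requires) and $H_bP^b(t,x,\cdot)$ acting in the second variable, as in Corollary \ref{PoissonProp}(2). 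None of this fails, but it is exactly the machinery the paper's expansion avoids: once $u$ is written as $\sum_n a_ne^{-\sqrt{\la^b_n}t}\phi^b_n$, membership in $\dom(H_b)$ and the formula for $H_bu(t,\cdot)$ come for free from the spectral theorem, with no operator--integral interchange needed. So your plan buys a more ``kernel-theoretic'' proof that recycles Corollary \ref{PoissonProp} directly, at the cost of a Bochner-integral argument that you should spell out; the paper's route is more elementary and self-contained given the estimates \eqref{est} and \eqref{est-l}.
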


\begin{proof}
Since, for $f\in L^p$,
\[
\int_K |\phi^b_n(y)f(y)|d\mu(y) \le C \lambda_n^\frac{d}{2(d+1)}||f||_{L^p}
\]
by \eqref{est}, we have that, for every positive integer $m$,
\[
\Big|\Big|\sum_{n=1}^m e^{-\sqrt{\lambda^b_n}t}\phi^b_n(x)\phi^b_n
f\Big|\Big|_{L^1}
\le C ||f||_{L^p} \sum_{n=1}^\infty
e^{-\sqrt{\lambda^b_n}t}(\lambda^b_n)^\frac{d}{d+1},
\]
so by \eqref{est-l} the sum is uniformly bounded in $m$ by $C' ||f||_{L^p}$.
The dominated convergence theorem implies then that 
$u(t,x) = \sum_{n=1}^\infty a_n e^{-\sqrt{\lambda^b_n}t}\phi^b_n(x)$, where
\[
a_n = \int_K \phi^b_n(y) f(y) d\mu(y).
\]

Now fix $x\in K$ and set $\psi_n(t) = a_n e^{-\sqrt{\lambda^b_n}t}\phi^b_n(x)$.
For each $k\in\N$,
\[
\psi^{(k)}(t) = a_n (\lambda^b_n)^{k/2} e^{-\sqrt{\lambda^b_n}t}\phi^b_n(x),
\]
so the series $\sum \psi_n^{(k)}(t)$ converges uniformly for $t\in[T,\infty)$,
for any $T>0$,  by \eqref{est-l}. It follows that $t\mapsto u(t,x)$ is in
$C^k(\R_+)$ for any $k$, which proves (1).

We now fix $t>0$. For (2), we first need to verify that
\[
\sum_{n=1}^\infty |a_n e^{-\sqrt{\lambda^b_n}t}|^2 < \infty.
\]
But this, again, follows from \eqref{est-l} and the fact that 
$|a_n|\le C (\lambda^b_n)^\frac{d}{2(d+1)} ||f||_{L^p}$. This shows $x\mapsto
u(t,x)$ is in $\dom(H_b)$ and
\[
H_b u(t,\cdot) = \sum_{n=1}^\infty a_n \lambda^b_n
e^{-\sqrt{\lambda^b_n}t} \phi^b_n.
\]
As above, this series converges uniformly, so we have that 
$H_b u(t,\cdot) \in C(K)$, and we conclude $u(t,\cdot)\in\D_b$.

Part (3) follows from the sequence of identities
\[
\frac{\partial^2 u(t,x)}{\partial t^2} = \sum_{n=1}^\infty \psi_n''(t)
= \sum_{n=1}^\infty a_n \lambda^b_n e^{-\sqrt{\lambda^b_n}t} \phi^b_n(x) =
H_b u(t,x) = -\Delta u(t,x).
\]
\end{proof}

We will call the function $u(t,x) = \Pois^b_tf(x)$ the \emph{Dirichlet Poisson
integral} (respectively \emph{Neumann Poisson integral}) of $f$. 

As in the case of the heat semigroup, it is not hard to see that 
$f\mapsto\Pois^b_t f$ is a strongly continuous semigroup on $L^2(K,\mu)$, which
implies that $u(t,x)\to f(x)$ as $t\to 0$ in $L^2$. It also acts as an
approximation to the identity on continuous functions.

\begin{proposition}\label{identPois}
\begin{enumerate}
\item Let $f\in C(K)$ and $u_N(t,x)$ its Neumann Poisson integral. Then 
$||u_N(t,\cdot) - f||_{L^\infty(K,d\mu)} \to 0$ as $t\to 0$.
\item Let $f\in C(K)$ with $f|_{V_0} \equiv 0$ and $u_D(t,x)$ its
Dirichlet Poisson integral. Then 
$||u_D(t,\cdot) - f||_{L^\infty(K,d\mu)} \to 0$ as $t\to 0$.
\end{enumerate}
\end{proposition}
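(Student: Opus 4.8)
The plan is to deduce Proposition~\ref{identPois} from the corresponding fact for the heat semigroup, Proposition~\ref{identHeat}, by means of the subordination identity in Proposition~\ref{subord}. The key observation is that subordination expresses $\Pois^b_t f$ as an average of $\Heat^b_s f$ against a probability density in $s$: writing $P^b(t,x,y) = \frac{t}{2\sqrt\pi}\int_0^\infty e^{-t^2/4s} H^b(s,x,y)\,s^{-3/2}\,ds$ and integrating against $f$, Fubini gives
\[
u_b(t,x) = \frac{t}{2\sqrt\pi}\int_0^\infty e^{-t^2/4s}\,\Heat^b_s f(x)\,\frac{ds}{s^{3/2}}.
\]
Taking $f\equiv 1$ in the Neumann case (or using the semigroup property more carefully), one checks that $\frac{t}{2\sqrt\pi}\int_0^\infty e^{-t^2/4s}\,s^{-3/2}\,ds = 1$ for all $t>0$; indeed this is exactly the identity $\int_0^\infty e^{-t^2/4s}e^{-\beta^2 s}s^{-3/2}\,ds = \frac{2\sqrt\pi}{t}e^{-\beta t}$ evaluated at $\beta = 0$. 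Hence $\eta_t(s) := \frac{t}{2\sqrt\pi}e^{-t^2/4s}s^{-3/2}$ is a probability density on $(0,\infty)$.

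With this in hand I would argue as follows. Fix $\delta > 0$. By Proposition~\ref{identHeat}, $\|\Heat^b_s f - f\|_{L^\infty} < \delta$ for all $s < s_0$, for some $s_0 = s_0(\delta)$; here in the Dirichlet case we use the hypothesis $f|_{V_0}\equiv 0$. Also $\|\Heat^b_s f\|_{L^\infty} \le \|f\|_{L^\infty}$ for all $s$, since the heat kernel is a (sub)Markovian kernel (this follows from nonnegativity of $H^b$ and the fact that $\Heat^N_s 1 = 1$, while $\Heat^D_s 1 \le 1$). Then
\[
\|u_b(t,\cdot) - f\|_{L^\infty} \le \int_0^\infty \eta_t(s)\,\|\Heat^b_s f - f\|_{L^\infty}\,ds
\le \delta + 2\|f\|_{L^\infty}\int_{s_0}^\infty \eta_t(s)\,ds,
\]
using $\int_0^\infty\eta_t(s)\,ds = 1$ on the first piece. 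It remains to show the tail $\int_{s_0}^\infty \eta_t(s)\,ds \to 0$ as $t\to 0$, which is a direct computation: substituting $s = t^2 u$ gives $\int_{s_0}^\infty\eta_t(s)\,ds = \frac{1}{2\sqrt\pi}\int_{s_0/t^2}^\infty e^{-1/4u}u^{-3/2}\,du$, and since the integrand is integrable near $+\infty$ (behaving like $u^{-3/2}$) and $s_0/t^2\to\infty$, this integral tends to $0$. Choosing $t$ small enough makes the whole bound less than $2\delta$, proving the claim.

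The one point requiring mild care is the justification of the Fubini interchange that produces the averaged representation of $u_b(t,x)$, and the interchange of $\|\cdot\|_{L^\infty}$ with the integral over $s$; both are routine given the absolute convergence established in the proof of Proposition~\ref{subord} (the bound $H^b(s,x,y)\lesssim s^{-2\alpha-1}$ near $s=0$, uniformly in $x,y$) together with $f\in L^\infty(K,\mu)$ and the compactness of $K$. I expect the main (very small) obstacle to be bookkeeping in the Dirichlet case: one must ensure that the hypothesis $f|_{V_0}=0$ is genuinely used so that Proposition~\ref{identHeat}(2) applies, and that the bound $\|\Heat^D_s f\|_{L^\infty}\le\|f\|_{L^\infty}$ is available — both hold, the latter because $0\le \Heat^D_s 1 \le \Heat^N_s 1 = 1$ by domain monotonicity of the heat kernels, or simply because $\Heat^D_s$ is a positive contraction on $L^\infty$. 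No genuinely hard estimate is needed; the whole proposition is a transfer of Proposition~\ref{identHeat} through the probabilistic subordination formula.
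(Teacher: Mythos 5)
Your proof is correct and follows essentially the same route as the paper: subordination (Proposition \ref{subord}) expresses $u_b(t,\cdot)-f$ as an average of $\Heat^b_s f - f$ against the probability density $\frac{t}{2\sqrt\pi}e^{-t^2/4s}s^{-3/2}$, Proposition \ref{identHeat} handles the small-$s$ part, and a uniform bound plus the vanishing of the tail mass as $t\to 0$ handles the rest. The only (immaterial) difference is that you estimate the tail by the substitution $s=t^2u$ while the paper computes it directly to get an explicit $C\sqrt t$ rate.
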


\begin{proof}
This proposition follows from Propositions \ref{identHeat} and \ref{subord}. 
Indeed, as
\[
\frac{t}{2\sqrt\pi} \int_0^\infty e^{-t^2/4s} \frac{ds}{s^{3/2}} =
\frac{t}{2\sqrt\pi} \int_0^\infty e^{-\frac{t^2}{4}s} s^{1/2} \frac{ds}{s} = 1,
\]
using Fubini's theorem we obtain
\[
u_b(t,x) - f(x) = \frac{t}{2\sqrt\pi} \int_0^\infty e^{-t^2/4s} 
\big( \Heat^b_s f(x) - f(x) \big)
\frac{ds}{s^{3/2}},
\]
where $\Heat^b_s$ is the heat semigroup. By Proposition \ref{identHeat}, for
any $f\in C(K)$ and any $\e > 0$ there is $\delta>0$ such that, if
$0<s<\delta$,
\[
\big|\big| \Heat^b_sf - f \big|\big|_{L^\infty(K,d\mu)} < \e,
\]
if $b=N$, and for $f\in C(K)$ with $f|_{V_0}\equiv 0$ if $b=D$. Thus
\[
\begin{split}
||u_b(t,\cdot) - f||_{L^\infty(K,d\mu)} &\le
\frac{t}{2\sqrt\pi} \int_0^\infty e^{-t^2/4s} 
\big|\big| \Heat^b_s f - f \big|\big|_{L^\infty(K,d\mu)} 
\frac{ds}{s^{3/2}}\\
&\le \frac{t}{2\sqrt\pi} \int_0^\delta e^{-t^2/4s} \e \frac{ds}{s^{3/2}} +
\frac{t}{2\sqrt\pi} \int_\delta^\infty e^{-t^2/4s} M \frac{ds}{s^{3/2}},
\end{split}
\]
where $M>0$ is such that
$\big|\big| \Heat^b_sf - f \big|\big|_{L^\infty(K,d\mu)} \le M$ uniformly in
$s$ ($f$ is continuous).

Now
\[
\frac{t}{2\sqrt\pi} \int_0^\delta e^{-t^2/4s} \frac{ds}{s^{3/2}} \le
\frac{t}{2\sqrt\pi} \int_0^\infty e^{-t^2/4s} \frac{ds}{s^{3/2}} = 1,
\]
and
\[
\frac{t}{2\sqrt\pi} \int_\delta^\infty e^{-t^2/4s} \frac{ds}{s^{3/2}} \le
\frac{t}{2\delta^{1/4}\sqrt\pi} \int_0^{1/\delta} e^{-t^2s/4} s^{1/4}
\frac{ds}{s} \le \frac{\sqrt t}{\sqrt 2\delta^{1/4}\sqrt\pi} \Gamma(1/4),
\]
so
\[
||u_b(t,\cdot) - f||_{L^\infty(K,d\mu)} \le \e + C \sqrt t
\]
for some constant $C>0$. Therefore, as $\e>0$ is arbitrary, we obtain both
cases of the proposition.
\end{proof}

We now state and prove the following theorem, which describes the boundary
behavior of $u(t,x)$ for $f\in L^p$.

\begin{theorem}\label{approx-ident}
Let $f\in L^p(K,\mu)$, $1\le p\le \infty$, and $u(t,x)$ either its Dirichlet
or
Neumann Poisson integral.
\begin{enumerate}
\item There exists a constant $A>0$ such that, for every $t>0$,
\[
|u(t,x)| \le A Mf(x),
\]
where $Mf$ is the maximal function defined in Section \ref{Maximal};
\item $u(t,\cdot) \to f$ in $L^p(K,\mu)$, if $1 \le p < \infty$; 
\item $\lim_{t\to 0}u(t,x) = f(x)$ for a.e. $x\in K$.
\end{enumerate}
\end{theorem}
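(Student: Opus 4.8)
The plan is to follow the classical pattern for approximate identities, using the subordination formula of Proposition~\ref{subord} to reduce pointwise estimates on $P^b$ to the known properties of the heat kernel, together with the scaling estimate \eqref{scalmu} for $\mu$. For part (1), I would fix $t>0$ and $x\in K$ and split the integral $u(t,x)=\int_K P^b(t,x,y)f(y)\,d\mu(y)$ into dyadic annuli $B_{2^{k+1}t}(x)\setminus B_{2^k t}(x)$ around $x$ (for $2^k t$ up to the diameter of $K$), plus the central ball $B_t(x)$. On each annulus one bounds $\int_{B_{2^{k+1}t}(x)}|f|\,d\mu$ by $\mu(B_{2^{k+1}t}(x))\,Mf(x)\lesssim (2^k t)^d Mf(x)$, so it remains to obtain a pointwise decay estimate $P^b(t,x,y)\lesssim t\,/\,R(x,y)^{d+1}$ for $R(x,y)\ge t$ and $P^b(t,x,y)\lesssim t^{-d}$ for $R(x,y)\le t$; then the dyadic sum $\sum_k (2^k t)^d\cdot t\,(2^k t)^{-(d+1)}\sim \sum_k 2^{-k}$ converges and gives the bound $AMf(x)$. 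The pointwise kernel bounds themselves come from Proposition~\ref{subord}: using the Gaussian-type upper estimate for $H^b(s,x,y)$ (which the excerpt derives for the diagonal as $H^b(s,\cdot,\cdot)\lesssim s^{-d/(d+1)\cdot\ldots}$, but for the off-diagonal decay one needs the standard sub-Gaussian heat-kernel upper bound $H^b(s,x,y)\lesssim s^{-d/(d+1)}\exp(-c(R(x,y)^{d+1}/s)^{1/d})$ available for regular harmonic structures on p.c.f.\ sets) and integrating against $e^{-t^2/4s}s^{-3/2}$ in $s$, one gets exactly the Poisson-type decay above by a routine change of variables.

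For part (2), I would argue by density: for $g\in C(K)$ (with $g|_{V_0}=0$ in the Dirichlet case, which is dense in the relevant closed subspace), Proposition~\ref{identPois} gives $\|\Pois^b_t g - g\|_\infty\to0$, hence $\|\Pois^b_t g-g\|_{L^p}\to0$ since $\mu$ is finite; then for general $f$ write $f=g+(f-g)$ with $\|f-g\|_{L^p}$ small, and control $\|\Pois^b_t(f-g)\|_{L^p}$ using part (1) and the $L^p$-boundedness of the maximal operator (Theorem~\ref{maxthm}(2)) when $1<p<\infty$, or directly from $\|\Pois^b_t h\|_{L^1}\le\|h\|_{L^1}$ (which follows from $\int_K P^b(t,x,y)\,d\mu(x)\le 1$, itself a consequence of subordination and $\int_K H^b(s,x,y)\,d\mu(x)\le 1$) when $p=1$. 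A small point to handle: in the Dirichlet case $\Pois^D_t f\to f$ in $L^p$ requires that $C(K)$ functions vanishing on $V_0$ be dense in $L^p(K,\mu)$, which holds because $V_0$ is finite and hence $\mu$-null.

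For part (3), the standard Marcinkiewicz-style argument applies: define the oscillation $\Omega f(x)=\limsup_{t\to0}|u(t,x)-f(x)|$; for $g\in C(K)$ (vanishing on $V_0$ in the Dirichlet case) Proposition~\ref{identPois} gives $\Omega g\equiv 0$, and by part (1) and sublinearity $\Omega f(x)=\Omega(f-g)(x)\le AM(f-g)(x)+|f(x)-g(x)|$. For any $\alpha>0$ the weak-type $(1,1)$ bound of Theorem~\ref{maxthm}(1) together with Chebyshev gives $\mu(\{\Omega f>\alpha\})\le \mu(\{M(f-g)>\alpha/2A\})+\mu(\{|f-g|>\alpha/2\})\lesssim \alpha^{-1}\|f-g\|_{L^1}$, and taking $\|f-g\|_{L^1}\to0$ shows $\mu(\{\Omega f>\alpha\})=0$ for every $\alpha$, hence $\Omega f=0$ a.e., which is exactly the claimed pointwise convergence. (For $p>1$ one may equally use the strong $(p,p)$ bound.) The main obstacle is part (1): everything hinges on having the off-diagonal (sub-Gaussian) upper bound for $H^b(s,x,y)$ in the resistance metric, so that subordination yields genuine spatial decay of $P^b$; the excerpt so far only records the diagonal-type bound, so I would either invoke the known heat-kernel estimates for regular harmonic structures on p.c.f.\ self-similar sets (as cited, e.g., via \cite{Kigami}) or — if one wants to stay self-contained — note that even the weaker estimate $P^b(t,x,y)\lesssim t\cdot t^{-(d+1)}$ uniformly (from integrating the uniform bound $H^b(s,\cdot,\cdot)\lesssim s^{-(d+1)/\ldots}$) combined with the finite measure of $K$ suffices to get $|u(t,x)|\le A\,t^{-?}\|f\|_{L^1}$, which is too weak; so the sharp argument really does require the annular decomposition and the sub-Gaussian bound, and that is the step I expect to demand the most care.
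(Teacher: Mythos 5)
Your overall scheme for part (1) — subordination (Proposition \ref{subord}) together with a sub-Gaussian upper bound for the heat kernel, then a dyadic annular decomposition controlled by the maximal function — is exactly the paper's route (Lemma \ref{estPois} plus the annuli in the proof of Theorem \ref{approx-ident}), and your treatment of (2) and (3) via Proposition \ref{identPois}, Theorem \ref{maxthm} and the standard oscillation argument is what the paper dismisses as classical. The genuine gap is quantitative: the pointwise bounds you assert, $P^b(t,x,y)\lesssim t^{-d}$ for $R(x,y)\le t$ and $P^b(t,x,y)\lesssim t\,R(x,y)^{-(d+1)}$ for $R(x,y)\ge t$, are not what subordination yields, and the second one is false in general. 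Plugging Barlow's bound $H^b(s,x,y)\lesssim s^{-d/(d+1)}\exp\bigl(-c(R^{d+1}/s)^{1/d}\bigr)$ into Proposition \ref{subord} gives (this is Lemma \ref{estPois})
\[
P^b(t,x,y)\;\lesssim\;\min\Bigl\{\,t^{-\frac{2d}{d+1}},\;\frac{t}{R(x,y)^{\frac{3d+1}{2}}}\Bigr\},
\]
so the spatial scale naturally attached to $t$ is $t^{2/(d+1)}$ (from $R^{d+1}\sim t^2$), not $t$. At $R\approx t^{2/(d+1)}$, which is a distance $\ge t$ for small $t$ since $d\ge 1$, the kernel genuinely has size about $t^{-2d/(d+1)}$ (for nested fractals this follows from the matching lower heat-kernel bound, cf.\ Theorem \ref{affine-est}, and subordination), whereas your claimed bound would give only $t/R^{d+1}\approx t^{-1}$; since $2d/(d+1)>1$ whenever $d>1$ (e.g.\ the Sierpinski gasket), your off-diagonal estimate fails on the whole range $t\lesssim R\lesssim t^{2/(d+1)}$, and the sum $\sum_k(2^kt)^d\,t\,(2^kt)^{-(d+1)}\sim\sum_k2^{-k}$ on which your part (1) rests is computed from a false premise. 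Only in the case $d=1$ (the unit interval) do your exponents coincide with the correct ones.

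The repair is exactly the paper's argument: use the bounds of Lemma \ref{estPois} and take the annuli at radius $\rho_n=2^n t^{2/(d+1)}$, so that by \eqref{scalmu} the $n$-th annulus contributes $\lesssim t\,\rho_n^{-\frac{3d+1}{2}}\mu(B_{\rho_n}(x))\,Mf(x)\lesssim 2^{-n\frac{d+1}{2}}Mf(x)$, while the central ball of radius $t^{2/(d+1)}$ contributes $\lesssim t^{-\frac{2d}{d+1}}\,t^{\frac{2d}{d+1}}\,Mf(x)=Mf(x)$. Your parts (2) and (3) are fine as written (density of continuous functions vanishing on the finite, $\mu$-null set $V_0$, $L^1$-contractivity, and the weak-type oscillation argument) and agree with the paper's intended standard proofs.
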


For the proof of this theorem we use the following Lemma.

\begin{lemma}\label{estPois}
There exists a constant $C>0$ such that, for any $x,y\in K$ and $t>0$,
\[
P^b(t,x,y) \le C \min \Big\{ t^{-\frac{2d}{d+1}},
\frac{t}{R(x,y)^\frac{3d+1}{2}} \Big\}.
\]
\end{lemma}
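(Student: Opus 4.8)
The plan is to prove the two bounds separately and then take the minimum. Both will come from the subordination formula in Proposition~\ref{subord}, which expresses $P^b(t,x,y)$ as an integral of the heat kernel $H^b(s,x,y)$ against the kernel $\frac{t}{2\sqrt\pi}e^{-t^2/4s}s^{-3/2}$. The first step is to record the two basic estimates on $H^b$ that feed into this. From \eqref{est} and \eqref{est-l}, as already shown in the proof of Proposition~\ref{subord}, one has the diagonal-type bound $H^b(s,x,y)\le C s^{-2\alpha-1}=C s^{-(3d+1)/(d+1)}$ with $\alpha=d/(d+1)$, valid uniformly on $K\times K$; since $H^b$ is continuous on the compact set $[T,\infty)\times K\times K$, one also has $H^b(s,x,y)\le C$ for $s\ge 1$, and combining these gives $H^b(s,x,y)\le C\min\{1,s^{-(3d+1)/(d+1)}\}\le C\,s^{-(3d+1)/(d+1)}$ for $s\le 1$ and $H^b(s,x,y)\le C$ for all $s$. (Actually the first inequality $H^b(s,x,y)\le C s^{-(3d+1)/(d+1)}$ can be used for all $s>0$ up to adjusting constants on $s\ge 1$.)

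For the first term of the minimum, substitute $H^b(s,x,y)\le C s^{-(3d+1)/(d+1)}$ into the subordination integral and change variables $s=t^2 r$: the integral becomes $\frac{t}{2\sqrt\pi}\int_0^\infty e^{-1/4r}(t^2r)^{-(3d+1)/(d+1)}(t^2 r)^{-3/2}\,t^2\,dr$, and collecting powers of $t$ gives a factor $t^{1-2(3d+1)/(d+1)-3+2}=t^{-(2d)/(d+1)}$ times a convergent constant integral $\int_0^\infty e^{-1/4r}r^{-(3d+1)/(d+1)-3/2}\,dr$ (convergent at $0$ because the integrand decays like $e^{-1/4r}$, at $\infty$ because the exponent $-(3d+1)/(d+1)-3/2<-1$). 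This yields $P^b(t,x,y)\le C\,t^{-2d/(d+1)}$.

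For the second term I would exploit the resistance metric more carefully. The heat kernel on a p.c.f.\ set obeys a sub-Gaussian off-diagonal bound of the form $H^b(s,x,y)\le C s^{-d/(d+1)}\exp\!\big(-c\,(R(x,y)^{d+1}/s)^{1/d}\big)$; this is standard for regular harmonic structures (and is what the phrase ``estimates from below for the Neumann heat kernel'' in the introduction is alluding to on the upper side). Assuming this bound, plug it into the subordination integral. Since the Gaussian-type factor in $s$ dominates the polynomial factor, one bounds $s^{-d/(d+1)}\exp(-c(R^{d+1}/s)^{1/d})\le C_N (s/R^{d+1})^{N}$ for any large $N$, or more efficiently optimizes directly: write $\rho=R(x,y)$ and split the integral at $s\sim \rho^{d+1}$. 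For $s\le\rho^{d+1}$ the integrand is tiny (exponentially small in a positive power of $\rho^{d+1}/s$) and contributes at most $C t\,\rho^{-(3d+1)/2}$ after the elementary bookkeeping; for $s\ge\rho^{d+1}$ one uses $e^{-t^2/4s}\le 1$ and the polynomial decay $s^{-d/(d+1)}s^{-3/2}$, whose integral over $[\rho^{d+1},\infty)$ is a constant times $\rho^{(d+1)(1-d/(d+1)-3/2)}=\rho^{-(3d+1)/2}$, giving again $P^b(t,x,y)\le C t\,\rho^{-(3d+1)/2}$. Taking the minimum of the two estimates completes the proof.

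The main obstacle is the sub-Gaussian off-diagonal heat kernel estimate: unlike the on-diagonal bound, it is not derived in the excerpt, and one must either cite it (Kumagai, or Barlow--Perkins in the nested-fractal case, consistent with the paper's later restriction to nested fractals for the local Fatou theorem) or derive enough of it. Everything else is the routine ``scaling + dominated convergence'' manipulation of the subordination integral already rehearsed in Propositions~\ref{subord} and~\ref{identPois}; the only care needed is checking the exponents of $t$ and $R(x,y)$ match $-2d/(d+1)$ and $(3d+1)/2$ respectively, which is exactly the arithmetic carried out above.
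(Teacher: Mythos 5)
Your second estimate and the overall structure (subordination formula plus a sub-Gaussian upper bound for the heat kernel) are exactly the paper's route: the paper also cites the off-diagonal bound $H^b(s,x,y)\lesssim s^{-d/(d+1)}\exp\bigl(-c'(R(x,y)^{d+1}/s)^{1/d}\bigr)$ (from Barlow, Theorem 8.15), drops $e^{-t^2/4s}\le 1$, and obtains $t\,R(x,y)^{-(3d+1)/2}$ by the same scaling/splitting bookkeeping you describe; your remark that this off-diagonal input must be cited rather than derived is accurate.

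However, your first estimate contains a genuine error. You feed in the crude on-diagonal bound $H^b(s,x,y)\le Cs^{-(3d+1)/(d+1)}$ coming from the eigenfunction expansion (the bound used in the proof of Proposition \ref{subord}), and then claim the substitution $s=t^2r$ produces the power $t^{1-2(3d+1)/(d+1)-3+2}=t^{-2d/(d+1)}$. The arithmetic is wrong: $1-2(3d+1)/(d+1)-3+2=-2(3d+1)/(d+1)$, so this argument only yields $P^b(t,x,y)\lesssim t^{-2(3d+1)/(d+1)}$, which blows up strictly faster than the claimed $t^{-2d/(d+1)}$ as $t\to0$; no rearrangement of the crude bound (even combined with boundedness of $H^b$ for $s\ge1$) can recover the stated exponent, since near $s\sim t^2$ the crude bound is simply too weak. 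The fix is the one the paper uses: take the same sub-Gaussian estimate you already invoke for the second bound, discard the exponential factor to get $H^b(s,x,y)\lesssim s^{-d/(d+1)}$ uniformly in $x,y$, and then the identical change of variables gives $1-2d/(d+1)-3+2=-2d/(d+1)$, i.e.\ $P^b(t,x,y)\lesssim t^{-2d/(d+1)}$. With that substitution your proposal coincides with the paper's proof.
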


\begin{proof}
This lemma follows from the estimate for the heat kernel
\[
H^b(t,x,y) \le 
A t^{-\frac{d}{d+1}}\exp \Big( -c' \big(\frac{R(x,y)^{d+1}}{t}\big)^{1/d} \Big)
\]
for some $A,c'>0$ \cite[Theorem 8.15]{Barlow98}. By Proposition \ref{subord},
\[
P^b(t,x,y) = \frac{t}{2\sqrt \pi} \int_0^\infty e^{-t^2/4s} H^b(s,x,y)
\frac{ds}{s^{3/2}},
\]
so we have
\[
\begin{split}
P^b(t,x,y) &\lesssim t \int_0^\infty e^{-t^2/4s} s^{-\frac{d}{d+1}}
\frac{ds}{s^{3/2}}
= t \int_0^\infty e^{-t^2/4s} s^{-\frac{3d+1}{2(d+1)}} \frac{ds}{s} \\
&\approx t\cdot t^{-\frac{3d+1}{d+1}} = t^{-\frac{2d}{d+1}},
\end{split}
\]
and also
\[
P^b(t,x,y) \lesssim t \int_0^\infty \exp \Big( -c'
\big(\frac{R(x,y)^{d+1}}{s}\big)^{1/d} \Big)
s^{-\frac{3d+1}{2(d+1)}} \frac{ds}{s} \lesssim
t \cdot R(x,y)^{-\frac{3d+1}{2}}.
\]
\end{proof}

\begin{proof}[Proof of Theorem \ref{approx-ident}]
\begin{enumerate}
\item This part follows by an argument analogous to the well-known Euclidean 
case: since the Poisson integral $u(t,x)$ of $f$ is given by
\[
u(t,x) = \int_K P^b(t,x,y) f(y) d\mu(y),
\]
we write
\[
|u(t,x)| \le \int_K P^b(t,x,y) |f(y)| d\mu(y) = 
\sum_{n=0}^\infty \int_{A_n(x)} P^b(t,x,y) |f(y)| d\mu(y),
\]
where $A_0(x) = \{ y\in K : R(x,y) \le t^{\frac{2}{d+1}} \}$, and
\[
A_n(x) = \{ y\in K : 2^{n-1} t^{\frac{2}{d+1}} < R(x,y) \le  
2^n t^{\frac{2}{d+1}}\}, \qquad n\ge 1. 
\]
Now, from Lemma \ref{estPois} and the estimate \eqref{scalmu},
\[
\begin{split}
\int_{A_0(x)} P^b(t,x,y) &|f(y)| d\mu(y)\\ &\lesssim t^{-\frac{2d}{d+1}} \cdot 
\frac{t^{\frac{2}{d+1} \cdot d}}{\mu(B_{t^{2/(d+1)}}(x))}
\int_{B_{t^{2/(d+1)}}(x)} |f(y)| d\mu(y)\\
&\le Mf(x).
\end{split}
\]
Similarly, for each $n\ge 1$,
\[ 
\begin{split}
\int_{A_n(x)} P^b(t,x,y) &|f(y)| d\mu(y) \\
&\lesssim t \int_{A_n(x)} \frac{1}{R(x,y)^\frac{3d+1}{2}} |f(y)| d\mu(y)\\
&\le t \cdot \frac{1}{\big( 2^{n-1}t^\frac{2}{d+1} \big)^\frac{3d+1}{2} }
\int_{B_{2^nt^{2/(d+1)}}(x)} |f(y)|d\mu(y)\\
&\lesssim 
\frac{t^{-\frac{2d}{d+1}}
2^{-n\frac{3d+1}{2}}(2^nt^{\frac{2}{d+1}})^d}{\mu(B_{2^nt^{2/(d+1)}}(x))}
\int_{B_{2^nt^{2/(d+1)}}(x)} |f(y)|d\mu(y)\\
&\le 2^{-\frac{d+1}{2} n} Mf(x).
\end{split}
\]
Therefore
\[
|u(t,x)| \lesssim \sum_{n=0}^\infty 2^{-\frac{d+1}{2} n} Mf(x) \lesssim Mf(x).
\]
\item This part follows, as in the classical case, from the fact that the 
family $\{P^b(t,x,y)\}_{t>0}$ forms an approximation to the identity for 
continuous functions, and the fact that the maximal function is weakly bounded
in $L^1$ and bounded in $L^p$, $p>1$.
\item This is standard \cite{SW}.
\end{enumerate}
\end{proof}

\begin{remark}
We observe that we can write the estimates of Lemma \ref{estPois} as
\[
P^b(t,x,y) \le \frac{C' t}{(t^2 + R(x,y)^{d+1})^\frac{3d+1}{2(d+1)}},
\]
for some $C'>0$ and either $b=N$ or $D$, and thus we have an analogous
inequality to the classical Poisson kernel.
\end{remark}

\begin{remark}\label{approx-ident-Borel}
If $\nu$ is a finite Borel measure on $K$, we can define its Dirichlet and
Neumann Poisson integrals $u(t,x) = \Pois^b_t\nu(x)$ as
\[
\Pois_t^b\nu(x) = \int_K P^b(t,x,y) d\nu(y).
\]
The same arguments as in Theorem \ref{approx-ident}(1) imply the estimate
$|u(t,x)|\lesssim M\nu(x)$, and hence $u(t,\cdot) \to \nu$ as $t\to 0$. In
particular, $||u(t,\cdot)||_{L^1}$ is uniformly bounded.
\end{remark}

\subsection{Nontangential limits}

In this section we discuss nontangential limits of Poisson integrals. We first
develop the concept of a \emph{cone} over a point $x\in K$. From the final
remark in the previous section, for $\alpha>0$ we consider the set 
\[
\Gamma_\alpha(x) = \{ (t,y)\in \R_+\times K: R(x,y)^{d+1} < \alpha t^2 \}.
\]
The set $\Gamma_\alpha(x)$ is not properly a cone; however, in the case $d>1$,
it contains the intersection of the cone 
\[
\{ (t,y)\in\R_+\times K: R(x,y) < \sqrt\alpha t \}
\]
with set $\{ (t,y)\in\R_+\times K: R(x,y)<1\}$, as $R(x,y)^2 > R(x,y)^{d+1}$ 
for such points.

We now state the following result.

\begin{theorem}\label{nontanthm}
Let $f\in L^p(K,d\mu)$, $1\le p\le \infty$, and $u(t,x)$ either its
Dirichlet or its Neumann Poisson integral. Let $\alpha>0$. Then
\begin{enumerate}
\item There exists $A_\alpha>0$ such that, for $x\in K$,
\[
\sup_{(t,y)\in\Gamma_\alpha(x)} |u(t,y)| \le A_\alpha Mf(x),
\]
where $Mf$ is the maximal function of $f$;
\item For almost every $x\in K$, 
\[
\lim_{\substack{(t,y)\to(0,x)\\(t,y)\in\Gamma_\alpha(x)}} u(t,y) = f(x).
\]
\end{enumerate}
\end{theorem}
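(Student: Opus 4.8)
The plan is to mimic the classical Euclidean argument for nontangential convergence of Poisson integrals, using the pointwise kernel estimate of Lemma~\ref{estPois} together with the maximal theorem (Theorem~\ref{maxthm}). For part (1), fix $x\in K$ and take $(t,y)\in\Gamma_\alpha(x)$, so that $R(x,y)^{d+1}<\alpha t^2$. I would decompose $K$ into the annuli centered at $x$ at scales $2^n t^{2/(d+1)}$, exactly as in the proof of Theorem~\ref{approx-ident}(1), but now crucially tracking that $R(x,y)$ is comparable to at most $\sqrt{\alpha}\, t^{2/(d+1)}$. On the ball $B_{C_\alpha t^{2/(d+1)}}(x)$ one uses the bound $P^b(t,y,z)\lesssim t^{-2d/(d+1)}$ from Lemma~\ref{estPois}; on the $n$-th annulus (for $n$ large enough that $2^{n}t^{2/(d+1)}\gtrsim R(x,y)$, which is guaranteed once $2^n\gtrsim\sqrt\alpha$) one has $R(y,z)\gtrsim R(x,z)\gtrsim 2^{n}t^{2/(d+1)}$ by the triangle inequality, so $P^b(t,y,z)\lesssim t R(x,z)^{-(3d+1)/2}$, and the geometric-series estimate goes through verbatim, yielding $|u(t,y)|\le A_\alpha Mf(x)$ with $A_\alpha$ depending on $\alpha$ only through the finitely many exceptional small-$n$ terms. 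The point is that the center of the ball for the maximal function remains $x$, not $y$.

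For part (2), I would follow the standard density argument. Decompose $f=g+h$ where $g\in C(K)$ (and, in the Dirichlet case, $g|_{V_0}\equiv 0$ — here one uses that functions vanishing on $V_0$ are dense in $L^p$ whenever $p<\infty$, and for $p=\infty$ one argues slightly differently or reduces to $p<\infty$) with $\|h\|_p$ small. For the continuous piece $g$, Proposition~\ref{identPois} gives $\|\Pois^b_t g - g\|_\infty\to 0$, and since $g$ is continuous, $g(y)\to g(x)$ as $y\to x$; hence $\Pois^b_t g(y)\to g(x)$ as $(t,y)\to(0,x)$ unrestrictedly, in particular within $\Gamma_\alpha(x)$. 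For the remainder $h$, define the nontangential maximal function $u^*_\alpha(x)=\sup_{(t,y)\in\Gamma_\alpha(x)}|\Pois^b_t h(y)|$; by part (1), $u^*_\alpha(x)\le A_\alpha Mh(x)$, and also $|h|$ contributes a term controlled by $|h(x)|$ — so the oscillation
\[
\Lambda h(x) := \limsup_{\substack{(t,y)\to(0,x)\\(t,y)\in\Gamma_\alpha(x)}}|\Pois^b_t h(y) - h(x)| \le A_\alpha Mh(x) + |h(x)|.
\]
Since $\Lambda f = \Lambda h$ (the continuous part contributes nothing) and the right-hand side has $L^p$-quasinorm (or weak-$L^1$ norm when $p=1$) bounded by $C_\alpha\|h\|_p$, which is arbitrarily small, the standard measure-theoretic lemma forces $\Lambda f(x)=0$ a.e., which is the claim.

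The main obstacle I anticipate is \textbf{not} the measure-theoretic skeleton but the geometric bookkeeping in part (1): one must verify that the triangle inequality $R(x,z)\le R(x,y)+R(y,z)$ really does let one replace balls around the moving point $y$ by balls around the fixed point $x$ with only an $\alpha$-dependent loss. Concretely, on the annulus $A_n(x)$ with $n$ large, $R(x,z)\ge 2^{n-1}t^{2/(d+1)}$ while $R(x,y)<\sqrt\alpha\, t^{2/(d+1)}$, so $R(y,z)\ge R(x,z)-R(x,y)\ge (2^{n-1}-\sqrt\alpha)t^{2/(d+1)}\ge \tfrac12\cdot 2^{n-1}t^{2/(d+1)}$ once $2^{n-1}\ge 2\sqrt\alpha$; the finitely many annuli with $2^{n-1}<2\sqrt\alpha$ are absorbed into the ball $B_{C_\alpha t^{2/(d+1)}}(x)$ and handled by the $t^{-2d/(d+1)}$ bound, contributing a constant multiple (depending on $\alpha$) of $Mf(x)$. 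A secondary subtlety is that Lemma~\ref{estPois} as stated bounds $P^b(t,x,y)$, and here the first argument is the nontangential height $t$ while the spatial separation $R(x,y)$ in the lemma becomes $R(y,z)$; these are independent variables in the lemma, so no difficulty arises, but it is worth stating explicitly. Everything else — the density of nice functions, the weak-type estimate, Proposition~\ref{identPois} — is routine once part (1) is in hand.
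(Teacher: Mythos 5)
Your proposal is correct. Part (1) is essentially the paper's argument: the paper also proves a transferred kernel estimate $P^b(t,y,z)\le C_\alpha\min\{t^{-2d/(d+1)},\,t\,R(x,z)^{-(3d+1)/2}\}$ for $(t,y)\in\Gamma_\alpha(x)$, splitting into the case $R(x,y)\le\tfrac12 R(x,z)$ (triangle inequality) and the case $R(x,y)>\tfrac12 R(x,z)$ (where the cone condition converts $t^{-2d/(d+1)}$ into the tail bound), and then reruns the annular decomposition of Theorem \ref{approx-ident} around the fixed point $x$; your variant of absorbing the finitely many near annuli into an enlarged central ball is the same mechanism. One cosmetic slip: the cone condition $R(x,y)^{d+1}<\alpha t^2$ gives $R(x,y)<\alpha^{1/(d+1)}t^{2/(d+1)}$, not $\sqrt\alpha\,t^{2/(d+1)}$; this only changes the $\alpha$-dependent threshold for $n$ and is harmless. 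Part (2) is where you genuinely diverge: the paper argues at each Lebesgue point $x\notin V_0$ directly, introducing $g(y)=|f(y)-f(x)|\chi_{B_\delta(x)}(y)$ with $Mg(x)<\e$, applying part (1) to $g$, estimating the far region by the tail bound, and handling the normalization defect of the Dirichlet kernel via Lemma \ref{int1} (that $\int_K P^D(t,x,y)\,d\mu(y)\to 1$ for $x\notin V_0$). You instead use the classical density argument: split $f=g+h$ with $g$ continuous (vanishing on $V_0$ in the Dirichlet case, which is indeed dense in $L^p$, $p<\infty$, since points have $\mu$-measure zero by \eqref{scalmu}), use Proposition \ref{identPois} plus continuity for $g$, and control the oscillation of the $h$-part by $A_\alpha Mh+|h|$ together with the weak-type/strong-type bounds of Theorem \ref{maxthm}, reducing $p=\infty$ to $p<\infty$ by finiteness of $\mu$. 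Your route avoids Lemma \ref{int1} entirely (the Dirichlet normalization issue is already absorbed in Proposition \ref{identPois}), at the cost of giving only an a.e. statement, whereas the paper's Lebesgue-point argument pins down convergence at every Lebesgue point off $V_0$. Both are complete proofs.
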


\begin{proof}
\begin{enumerate}
\item The proof of (1) follows as the one in Theorem \ref{approx-ident}, once
we prove that, for $(t,y)\in\Gamma_\alpha(x)$, $P^b(t,y,z)$ satisfies an
estimate as in Lemma \ref{estPois} for $(t,x,z)$, \ie 
\begin{equation}\label{nontanest}
P^b(t,y,z) \le C_\alpha \min \Big\{ t^{-\frac{2d}{d+1}},
\frac{t}{R(x,z)^\frac{3d+1}{2}} \Big\},
\end{equation}
for some constant $C_\alpha >0$. 
Indeed, if $R(x,y) \le \dfrac{1}{2}R(x,z)$,
\[
R(y,z) \ge R(x,z) - R(x,y) \ge \frac{1}{2}R(x,z),
\]
and we have, by Lemma \ref{estPois},
\[
P^b(t,y,z) \le \frac{Ct}{R(y,z)^\frac{3d+1}{2}} \le
\frac{C't}{R(x,z)^\frac{3d+1}{2}}.
\]
If $R(x,y) > \dfrac{1}{2}R(x,z)$, since $(t,y)\in\Gamma_\alpha(x)$,
\[
\alpha t^2 > R(x,y)^{d+1} > \frac{1}{2^{d+1}} R(x,z)^{d+1},
\]
and therefore, by Lemma \ref{estPois}
\[
\begin{split}
P^b(t,y,z) &\le Ct^{-\frac{2d}{d+1}} = \frac{Ct}{t^\frac{3d+1}{d+1}}\\ &<
\frac{Ct}{\Big( \dfrac{1}{\alpha 2^{d+1}}(R(x,z)^{d+1}
\Big)^\frac{3d+1}{2(d+1)}} =\frac{C_\alpha t}{R(x,z)^\frac{3d+1}{2}}.
\end{split}
\]
The proof now follows as in Theorem \ref{approx-ident}, by decomposing the
Poisson integral
\[
u(t,y) = \int_K P^b(t,y,z) f(z) d\mu(z)
\]
in annuli $A_n(x)$, with center $x$, of radius $\sim 2^n t^\frac{2}{d+1}$ for
each $n$.
\end{enumerate}
\end{proof}

For the second part of the theorem we need the following lemma.

\begin{lemma}\label{int1}
For any $x\in K\setminus V_0$, 
\[
\lim_{t\to 0} \int_K P^D(t,x,y) d\mu(y) = 1.
\]
\end{lemma}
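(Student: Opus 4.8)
The plan is to reduce the statement to the already-established behavior of the \emph{Neumann} Poisson integral applied to the constant function $1$, and to use the subordination formula together with the heat-kernel version of the same fact. Concretely, set $g(t,x)=\int_K P^D(t,x,y)\,d\mu(y)=\Pois^D_t\mathbf 1(x)$; this is the Dirichlet Poisson integral of the constant function $\mathbf 1\in L^\infty(K,\mu)$, so by Theorem~\ref{Psemi} it is smooth in $t$, lies in $\D_D$ for each $t>0$, and solves $\partial_t^2 g+\Delta g=0$; in particular $g(t,\cdot)|_{V_0}=0$ for every $t>0$. By Proposition~\ref{subord} and Fubini,
\[
g(t,x)=\frac{t}{2\sqrt\pi}\int_0^\infty e^{-t^2/4s}\Big(\int_K H^D(s,x,y)\,d\mu(y)\Big)\frac{ds}{s^{3/2}}
=\frac{t}{2\sqrt\pi}\int_0^\infty e^{-t^2/4s}\,h(s,x)\,\frac{ds}{s^{3/2}},
\]
where $h(s,x)=\Heat^D_s\mathbf 1(x)=\int_K H^D(s,x,y)\,d\mu(y)$. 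Recalling that $\frac{t}{2\sqrt\pi}\int_0^\infty e^{-t^2/4s}\,s^{-3/2}\,ds=1$ (this identity is used in the proof of Proposition~\ref{identPois}), we get $g(t,x)-1=\frac{t}{2\sqrt\pi}\int_0^\infty e^{-t^2/4s}\big(h(s,x)-1\big)\frac{ds}{s^{3/2}}$, so it suffices to show that for fixed $x\in K\setminus V_0$ one has $h(s,x)\to 1$ as $s\to 0$, plus a uniform bound $0\le h(s,x)\le 1$ (which holds because $H^D\ge 0$ and $\Heat^D_s\mathbf 1\le\Heat^N_s\mathbf 1=1$ by domain monotonicity, or directly from stochastic-incompleteness considerations); then splitting the $s$-integral at a small $\delta$ exactly as in the proof of Proposition~\ref{identPois} gives $|g(t,x)-1|\le\e+C\sqrt t\to\e$, and $\e$ is arbitrary.

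So the crux is: \emph{for $x\in K\setminus V_0$, $\Heat^D_s\mathbf 1(x)\to 1$ as $s\to 0$.} Here the hypothesis $x\notin V_0$ is essential, since at a boundary point $H^D(s,\cdot,\cdot)$ vanishes and $h(s,p)=0$ identically for $p\in V_0$. The idea is a locality/finite-propagation argument. Fix $x\notin V_0$ and choose $m$ large enough that the cell $K_w$ containing $x$ (with $w\in W_m$) does not meet $V_0$; more precisely, pick a cell neighborhood $U=\bigcup_{w\in\Lambda}K_w$ of $x$ with $\overline U\cap V_0=\emptyset$. Write $\mathbf 1=\chi_U+\chi_{K\setminus U}$. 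For the piece supported near $x$ one uses $\Heat^N_s\mathbf 1=1$ and the Gaussian-type off-diagonal decay of $H^N$ (Barlow's estimate, Theorem~8.15 of \cite{Barlow98}, already invoked in Lemma~\ref{estPois}) to see $\Heat^N_s\chi_{K\setminus U}(x)\to 0$, hence $\Heat^N_s\chi_U(x)\to 1$; then one shows $|\Heat^N_s\chi_U(x)-\Heat^D_s\chi_U(x)|\to 0$, i.e.\ that replacing Neumann by Dirichlet boundary conditions costs nothing in the interior. This last comparison is the main obstacle.

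To handle the Neumann-versus-Dirichlet comparison I would use the probabilistic representation: $\Heat^N_s g(x)=\mathbf E^x[g(X_s)]$ for the $\mu$-symmetric diffusion $X$ on $K$ with reflecting boundary, while $\Heat^D_s g(x)=\mathbf E^x[g(X_s);\,s<\tau_{V_0}]$ where $\tau_{V_0}$ is the hitting time of $V_0$ (here one uses that $V_0$ is finite, so $\{X_\cdot=X_{\tau_{V_0}}\}$ coincides with hitting $V_0$). Then
\[
0\le \Heat^N_s\chi_U(x)-\Heat^D_s\chi_U(x)\le \mathbf E^x[\,\chi_U(X_s);\,\tau_{V_0}\le s\,]\le \mathbf P^x(\tau_{V_0}\le s)\le \mathbf P^x\big(\sup_{0\le r\le s}R(x,X_r)\ge \rho\big),
\]
where $\rho=R(x,V_0)>0$. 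The right side tends to $0$ as $s\to 0$ by the modulus-of-continuity / exit-time estimates for $X$ that accompany the sub-Gaussian heat kernel bounds on nested fractals (equivalently, integrating the off-diagonal bound $H^N(s,x,y)\lesssim s^{-d/(d+1)}\exp(-c(R(x,y)^{d+1}/s)^{1/d})$ over $R(x,y)\ge\rho/2$ gives a bound that is $o(1)$, and one upgrades from a fixed time to the supremum by a standard Markov-property chaining argument). Alternatively, purely analytically: $\Heat^N_s\chi_U-\Heat^D_s\chi_U=\Heat^N_s\chi_U-\Heat^D_s\Heat^N_{s/2}\chi_U+\dots$ — but the cleanest route is the probabilistic one, and it is exactly the step where the restriction to nested fractals (needed for the two-sided heat kernel bounds, as the author flags in the introduction) would be invoked if one wanted quantitative control; for the qualitative limit here the one-sided upper bound from \cite{Barlow98} already in use suffices. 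Assembling: $\Heat^D_s\mathbf 1(x)\ge \Heat^D_s\chi_U(x)\to 1$ and $\Heat^D_s\mathbf 1(x)\le 1$, so $h(s,x)\to 1$, and the subordination argument above finishes the proof.
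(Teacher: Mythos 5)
Your subordination reduction and the bound $0\le\Heat^D_s\mathbf 1\le 1$ are fine, but they only translate the lemma into an exactly equivalent statement about the heat semigroup (namely $\Heat^D_s\mathbf 1(x)\to1$ for $x\notin V_0$), and that is where your argument has a genuine gap. The crucial comparison $\Heat^N_s\chi_U(x)-\Heat^D_s\chi_U(x)\to0$ is not proved: you invoke the representation $\Heat^D_s g(x)=\mathbf E^x[g(X_s);\,s<\tau_{V_0}]$ for the diffusion killed on hitting $V_0$, together with $\mathbf P^x(\tau_{V_0}\le s)\to0$ obtained from exit-time/modulus-of-continuity estimates and a ``standard Markov-property chaining argument''. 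None of this machinery (existence of the associated process, identification of the semigroup of $(\dir,\F_0)$ with the process killed on the finite set $V_0$, non-polarity of points, the chaining estimate upgrading a fixed-time bound to a supremum over the path) is developed in the paper or carried out in your proposal; since $\mathbf P^x(\tau_{V_0}\le s)\to0$ \emph{is} essentially the content of the lemma, the main point is deferred to a black box. Your hedging about nested fractals also signals that the route you chose demands tools beyond what the paper uses, whereas the lemma holds for any regular harmonic structure.

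The paper's own proof is much shorter, and the idea you missed is to adapt the cutoff to $V_0$ rather than to $x$: choose a neighborhood $U$ of $V_0$ and $\delta>0$ with $B_\delta(x)\cap U=\emptyset$, and a continuous $f$ with $f|_{V_0}\equiv0$ and $f\equiv1$ on $K\setminus U$. Then $\int_K P^D(t,x,y)\,d\mu(y)=\int_K P^D(t,x,y)f(y)\,d\mu(y)+\int_U P^D(t,x,y)\big(1-f(y)\big)\,d\mu(y)$; the first term tends to $f(x)=1$ by Proposition \ref{identPois}(2), which applies directly because $f$ is continuous and vanishes on $V_0$, and the second term is at most $A_\delta\,\mu(U)\,t$ by Lemma \ref{estPois}, since $R(x,y)>\delta$ for $y\in U$. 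This decomposition makes the already-established Dirichlet approximate-identity result do all the work and avoids any Neumann--Dirichlet comparison, probabilistic representation, or subordination step altogether.
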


\begin{proof}
Let $U$ be a neighborhood of $V_0$ and $\delta>0$ such that 
$B_\delta(x)\cap U=\emptyset$. Let $f$ be a continuous function on $K$ such 
that $f\equiv 1$ on $K\setminus U$ and $f|_{V_0}\equiv 0$. Hence
\[
\int_K P^D(t,x,y) d\mu(y) = \int_K P^D(t,x,y)f(y)d\mu(y) + 
\int_U P^D(t,x,y)(1 - f(y))d\mu(y).
\]
By Proposition \ref{identPois},
\[
\int_K P^D(t,x,y)f(y)d\mu(y) \to f(x) = 1
\]
as $t\to 0$ and, since $R(x,y)>\delta$ for $y\in U$, 
\[
\int_U P^D(t,x,y)|1-f(y)|d\mu(y) \le A t \int_U
\frac{d\mu(y)}{R(x,y)^\frac{3d+1}{2}} < 
A_\delta \mu(U) t.
\]
Therefore 
\[
\int_K P^D(t,x,y) d\mu(y) \to 1
\]
as $t\to 0$.
\end{proof}

Observe that in the Neumann case one actually has
\[
\int_K P^N(t,x,y) d\mu(y) = 1
\]
for every $t>0$.

\begin{proof}[Proof of Theorem \ref{nontanthm}]
\begin{enumerate}
\item[(2)] Let $x\in K\setminus V_0$ be in the Lebesgue set of $f$, and let
$\e>0$. Thus, there exists $\delta>0$ such that
\begin{equation}\label{xLeb}
\frac{1}{\mu(B_r(x))} \int_{B_r(x)} |f(y) - f(x)| d\mu(y) < \e
\end{equation}
for every $r<\delta$. We thus define the function $g$ on $K$ by
\[
g(y) = 
\begin{cases}
|f(y) - f(x)| & R(x,y) < \delta\\
0 & R(x,y)\ge \delta.
\end{cases}
\]
Hence \eqref{xLeb} implies $Mg(x) < \e$. We now have
\begin{multline*}
|u(t,y) - f(x)| \le\\ \int_K P^b(t,y,z)|f(z) - f(x)| d\mu(z) + 
\Big| \int_K P^b(t,y,z) d\mu(z) - 1\Big| \cdot |f(x)|.
\end{multline*}
By Lemma \ref{int1}, the second term in the right hand side is either zero in
the Neumann case $b=N$, or goes to $0$ as $t\to 0$ in the Dirichlet case 
$b=D$. We split the first term as the sum
\[
\int_{B_\delta(x)} P^b(t,y,z)|f(z) - f(x)| d\mu(z) + 
\int_{K\setminus B_\delta(x)} P^b(t,y,z)|f(z) - f(x)| d\mu(z).
\]
Now, by part (1) of the theorem, if $(t,y)\in\Gamma_\alpha(x)$,
\[
\begin{split}
\int_{B_\delta(x)} P^b(t,y,z)|f(z) - f(x)| d\mu(z) &= \int_K
P^b(t,y,z)g(z)d\mu(z)\\ 
&\le A_\alpha Mg(x) < A_\alpha \e.
\end{split}
\]
Also, by \eqref{nontanest}, if $(t,y)\in\Gamma_\alpha(x)$,

\[
\begin{split}
\int_{K\setminus B_\delta(x)} P^b(t,y,z)|f(z) - f(x)| d\mu(z)
&\le C_\alpha t\int_{K\setminus B_\delta(x)}
\frac{|f(z)-f(x)|}{R(x,z)^\frac{3d+1}{2}} d\mu(z)\\
&\le C_{\alpha,\delta} t \int_K |f(z) - f(x)| d\mu(z)\\
&\le C_{\alpha,\delta} \big(||f||_{L^p}+|f(x)|\big) t,
\end{split}
\]
and thus goes to $0$ as $t\to 0$. Hence we have
\[
\limsup_{\substack{(t,y)\to(0,x)\\(t,y)\in\Gamma_\alpha(x)}}|u(t,y) - f(x)| \le
A_\alpha \e
\]
and, as $\e>0$ is arbitrary,
\[
\lim_{\substack{(t,y)\to(0,x)\\(t,y)\in\Gamma_\alpha(x)}} u(t,y) = f(x).
\]
Since $V_0$ is finite and the Lebesgue set of $f$ contains almost every point
of $K$ (if $f\in L^p(K,d\mu)$, then $f$ is integrable, as we have noted above),
therefore we obtain the theorem.
\end{enumerate}
\end{proof}

\section{Fatou-type theorems}\label{Fatou-sect}

\subsection{A maximum principle}

We say that a continuous function $u$ on $\R_+\times K$ is a \emph{harmonic
function} if
\begin{enumerate}
\item $u(\cdot,x)\in C^2(\R_+)$ for each $x\in K$;
\item $u(t,\cdot)\in \D$ for each $t>0$; and
\item For each $(t,x)\in\R_+\times K\setminus V_0$,
\begin{equation}\label{Lapeq}
\frac{\partial^2 u(t,x)}{\partial t^2} + \Delta u(t,x) = 0.
\end{equation}
\end{enumerate}

For example, if $u(t,x)$ is the Poisson integral of $f\in L^p(K,d\mu)$, then
$u$ is a harmonic function, by Theorem \ref{Psemi}. 

We state and prove the following result, analogous to the 
\emph{parabolic} maximum principle for solutions of the heat equation
\cite[Section 5.2]{Kigami}.

\begin{theorem}\label{maxprthm}
Let $u$ be a harmonic function on $\R_+\times K$. Then $u$ cannot take a 
maximum in $\R_+\times (K\setminus V_0)$.
\end{theorem}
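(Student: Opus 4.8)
The plan is to argue by contradiction, imitating the classical parabolic maximum principle but exploiting the structure $\partial_t^2 u = -\Delta u$ on the fractal factor. Suppose $u$ attains a maximum value $M$ at some point $(t_0,x_0)$ with $t_0 > 0$ and $x_0 \in K \setminus V_0$. The first step is to reduce to a situation where we may use the sign of the Laplacian at an interior maximum of $u(t_0,\cdot)$: at a point $x_0 \in K\setminus V_0$ where the continuous function $x \mapsto u(t_0,x)$ attains its maximum over $K$, one has $\Delta u(t_0,x_0) \le 0$. This is the fractal analogue of the second-derivative test and should follow from the definition of the Laplacian via the Dirichlet form together with the fact that $x_0$ is not a boundary point (so that the Neumann-derivative/boundary terms do not interfere); I would cite the appropriate pointwise characterization of $\Delta$ from \cite{Kigami}. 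Consequently $\partial_t^2 u(t_0,x_0) = -\Delta u(t_0,x_0) \ge 0$.

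The delicate point is that $\partial_t^2 u \ge 0$ at a single point is not by itself contradictory with $(t_0,x_0)$ being a maximum in the $t$-direction — one needs strictness or a propagation argument, exactly as in the heat-equation case. So the second step is the standard perturbation trick: for $\eta > 0$ consider $v_\eta(t,x) = u(t,x) + \eta t^2$ (or $+\eta(t - t_0)^2$ localized near $t_0$; I'll use $\eta t^2$ for simplicity). Then $\partial_t^2 v_\eta = \partial_t^2 u + 2\eta$, while $v_\eta$ still satisfies $v_\eta(t,\cdot) \in \D$ for each $t$ and $v_\eta$ is continuous. One now wants to locate a maximum of $v_\eta$ over a suitable compact region. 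Here I must be careful about the noncompactness of $\R_+ \times K$ in the $t$-variable: I would restrict attention to a slab $[\epsilon, T] \times K$ containing $(t_0,x_0)$ in its interior with respect to $t$, chosen so that — using the boundary-behavior results, e.g. $u(t,\cdot)\to f$ uniformly or the fact that $u$ is assumed a given harmonic function — $v_\eta$ on the slab still has its maximum strictly above the value on the faces $\{t = \epsilon\}$ and $\{t = T\}$ for $\eta$ small. Actually, since the theorem statement does not assume boundary limits, the cleanest route is: work on $[\delta, t_0 + \delta] \times K$ for small $\delta$, note $v_\eta(t_0, x_0) = M + \eta t_0^2$, and choose $\eta$ so small that this exceeds $\max\{\sup v_\eta|_{t = \delta}, \sup v_\eta|_{t = t_0+\delta}\}$ whenever those suprema are attained away from... — hmm, this requires $M$ not attained on those slices, which need not hold.

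Because of that subtlety, the version I would actually push through is a \emph{local} one: since the set where $u = M$ is closed and (I claim) its intersection with $\R_+ \times (K\setminus V_0)$, if nonempty, must "leak" to the boundary of the slab or to $V_0$. Concretely, let $(t_0,x_0)$ realize the maximum with $t_0$ \emph{minimal} among all maximizing points in a fixed slab $[\epsilon,T]\times K$ with $\epsilon < t_0 < T$ (such a minimal $t_0$ exists by compactness of the closed maximizing set intersected with the slab, unless the maximizing set meets $\{t=\epsilon\}$, which we rule out by taking $\epsilon$ below the infimum of $t$-coordinates of maximizers — legitimate because that infimum is positive once we know no maximizer has $t=0$; and $t\to 0$ is not in the domain at all). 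Then apply the $\eta t^2$ perturbation on $[\epsilon, t_0]\times K$: $v_\eta$ attains its max over this slab at some $(\bar t, \bar x)$; for $\eta$ small this point has $\bar t > \epsilon$ and, by minimality of $t_0$, $\bar t$ can be pushed to equal $t_0$ up to an $O(\eta)$ error, with $\bar x \in K\setminus V_0$ (the finitely many points of $V_0$ can be excluded by a further arbitrarily small modification of $u$ near $V_0$, or by noting the maximum set avoids $V_0$ by hypothesis — it is stated to avoid $K\setminus V_0$, so we must separately dispose of $V_0$-maxima, but the theorem only forbids maxima in $K\setminus V_0$, so maxima on $\{t>0\}\times V_0$ are allowed and cause no contradiction; the contradiction is only needed when the maximizer lies in $K\setminus V_0$). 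At such $(\bar t,\bar x)$: since $\bar x$ is an interior maximum of $v_\eta(\bar t,\cdot)$ over $K$ and $\bar x\notin V_0$, we get $\Delta v_\eta(\bar t,\cdot)(\bar x) \le 0$, hence $\partial_t^2 v_\eta(\bar t,\bar x) = -\Delta u(\bar t,\cdot)(\bar x) + 2\eta \ge 2\eta > 0$; but $\bar t$ is an interior maximum of $t\mapsto v_\eta(t,\bar x)$, forcing $\partial_t^2 v_\eta(\bar t,\bar x)\le 0$ — contradiction.

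The step I expect to be the main obstacle is the \emph{second-derivative test on the fractal}: justifying that at a point $x_0 \in K\setminus V_0$ where the continuous function $u(t_0,\cdot) \in \D$ attains its maximum over all of $K$, one has $\Delta u(t_0,\cdot)(x_0) \le 0$. Unlike the Euclidean case this is not immediate from a Taylor expansion; it should follow from the probabilistic/energy characterization of $\Delta$ (e.g. via $\Delta \phi(x) = \lim \frac{1}{\text{(measure)}}\int (\phi(y)-\phi(x))\,(\text{normalized})$, or from the fact that $-\Delta$ generates a positivity-preserving semigroup $\Heat_t^b$, so $\Heat_t^b$ applied to $u(t_0,\cdot)$ cannot exceed its max, and differentiating at $t=0$ gives $\Delta u(t_0,\cdot)(x_0) \le 0$ at an interior max), but making this rigorous — in particular handling the interplay between the Dirichlet versus Neumann boundary conditions encoded in $\D$ and the requirement $x_0 \notin V_0$ — is the part that needs genuine care and a precise citation to \cite[Chapter 3]{Kigami}. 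Everything else is the routine perturbation-and-compactness machinery of the parabolic maximum principle.
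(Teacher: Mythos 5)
Your proposal follows essentially the same route as the paper: the perturbation $u+\eta t^2$ (the paper uses $u_n=u+t^2/n$) combined with the fractal second-derivative test at an interior maximum point of $K\setminus V_0$, which is exactly the paper's Lemma \ref{maxlem}, quoted from Kigami, so the step you flag as the main obstacle is a citable known result rather than something to re-derive. The paper's proof is shorter than yours --- it simply supposes the perturbed function attains a maximum at a point of $\R_+\times(K\setminus V_0)$ and derives the contradiction from $\partial^2_t u_n\le 0$ there versus $\partial^2_t u_n=-\Delta u+2/n>0$ --- and omits the slab/minimal-$t_0$ compactness machinery you introduce to locate such a maximizer.
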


As in the case of the parabolic maximum principle, the proof of Theorem
\ref{maxprthm} makes use of the following lemma, whose proof can be found in 
\cite[Lemma 5.2.4]{Kigami}.

\begin{lemma}\label{maxlem}
Let $u\in\D$. If $u(x) = \max\{u(y):y\in K\}$ for some $x\in K\setminus V_0$,
then $\Delta u(x)\le 0$.
\end{lemma}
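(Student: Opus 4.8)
The plan is to prove Theorem~\ref{maxprthm} by contradiction, adapting the standard argument for the parabolic maximum principle to our time-elliptic equation $\partial_t^2 u + \Delta u = 0$, with Lemma~\ref{maxlem} supplying the crucial sign information on the spatial Laplacian at an interior spatial maximum. Suppose, for contradiction, that $u$ attains its maximum at a point $(t_0,x_0)$ with $t_0>0$ and $x_0\in K\setminus V_0$. The difficulty with applying Lemma~\ref{maxlem} directly is that $x_0$ is only the location of a \emph{space-time} maximum, and we need $x_0$ to maximize the spatial slice $u(t_0,\cdot)$ over all of $K$, which it does: since $(t_0,x_0)$ is a global maximum of $u$ on $\R_+\times K$, in particular $u(t_0,x_0)=\max\{u(t_0,y):y\in K\}$. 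Because $u(t_0,\cdot)\in\D$ and the maximum over $K$ is attained at the interior point $x_0\in K\setminus V_0$, Lemma~\ref{maxlem} gives $\Delta u(t_0,x_0)\le 0$.

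Next I would extract the time information. Since $u(\cdot,x_0)\in C^2(\R_+)$ and $t\mapsto u(t,x_0)$ has an interior maximum at $t_0>0$, the second derivative test yields $\partial_t^2 u(t_0,x_0)\le 0$. Combining this with the equation~\eqref{Lapeq}, valid at $(t_0,x_0)$ since $x_0\in K\setminus V_0$, we get
\[
0 = \frac{\partial^2 u(t_0,x_0)}{\partial t^2} + \Delta u(t_0,x_0) \le 0,
\]
which forces both terms to vanish: $\partial_t^2 u(t_0,x_0)=0$ and $\Delta u(t_0,x_0)=0$. This is not yet a contradiction, so the main obstacle is to upgrade these equalities into an impossibility; a single interior critical point being degenerate is entirely consistent with a genuine maximum.

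To resolve this I would perturb the function in the time variable, the classical trick for such maximum principles. For $\e>0$ consider $v_\e(t,x)=u(t,x)+\e t^2$. Then $v_\e$ still satisfies $\partial_t^2 v_\e + \Delta v_\e = 2\e + \partial_t^2 u + \Delta u = 2\e > 0$ at every $(t,x)$ with $x\in K\setminus V_0$. If $v_\e$ were to attain an interior maximum at some $(t_1,x_1)$ with $t_1>0$ and $x_1\in K\setminus V_0$, the same two-step analysis (Lemma~\ref{maxlem} for the spatial part, the one-dimensional second-derivative test for the time part) would give $\partial_t^2 v_\e(t_1,x_1)\le 0$ and $\Delta v_\e(t_1,x_1)\le 0$, whence $\partial_t^2 v_\e + \Delta v_\e \le 0$ there, contradicting $2\e>0$. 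Thus $v_\e$ has no interior maximum.

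It remains to transfer this back to $u$. On a suitable compact region $[t_0/2,\,2t_0]\times K$ containing $(t_0,x_0)$ in its time-interior, $v_\e$ is continuous and attains its maximum somewhere; by the previous paragraph this maximizer cannot lie with both $t\in(t_0/2,2t_0)$ and $x\in K\setminus V_0$, so it lies on the parabolic-type boundary where either $t\in\{t_0/2,2t_0\}$ or $x\in V_0$. Letting $\e\to0$, the maximum of $v_\e$ converges to that of $u$, and the maximizers accumulate on this boundary set; but the original global maximum of $u$ is attained at the interior point $(t_0,x_0)$ with value strictly larger than (or equal to, with care) the boundary values, giving the contradiction. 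The one delicate point, and where I would spend the most care, is handling the time-direction endpoints and the possibility that the maximum migrates to $V_0$ or to large $t$; this is managed by choosing the comparison region so that $(t_0,x_0)$ strictly exceeds the values of $u$ on the chosen boundary, which is possible precisely because $(t_0,x_0)$ is assumed to be a strict enough interior maximum, and then taking $\e$ small enough that the $\e t^2$ perturbation does not overturn this strict inequality.
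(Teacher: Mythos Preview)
Your proposal does not address the statement you were asked to prove. The target is Lemma~\ref{maxlem}: for $u\in\D$, if the maximum of $u$ over $K$ is attained at an interior point $x\in K\setminus V_0$, then $\Delta u(x)\le 0$. What you wrote is instead a proof of Theorem~\ref{maxprthm}, and throughout you \emph{invoke} Lemma~\ref{maxlem} as a known tool rather than establishing it. The paper itself does not prove Lemma~\ref{maxlem} either; it cites \cite[Lemma~5.2.4]{Kigami}. A proof of the lemma proceeds through the definition of $\Delta$ on $K$: using the discrete graph Laplacians on $V_m$ and the pointwise characterization of $\Delta u(x)$ as a renormalized limit of these, one observes that at an interior maximum each neighbor difference $u(y)-u(x)$ is nonpositive, so each discrete Laplacian is $\le 0$ at $x$, and the limit inherits this sign.

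Regarding the argument you actually wrote (for Theorem~\ref{maxprthm}): your perturbation $v_\e=u+\e t^2$ is exactly the paper's $u_n=u+t^2/n$, and your derivation that $v_\e$ cannot have an interior maximum is the same computation as the paper's. Where you diverge is the final ``transfer back to $u$'' step: you claim one can choose a compact slab on whose boundary $u$ is \emph{strictly} below $u(t_0,x_0)$, but the hypothesis only gives a (possibly non-strict) maximum, so this is not justified as written. The paper avoids this by arguing directly that each $u_n$ has no interior maximum and leaving the passage to $u$ implicit; a clean way to close the gap is to work on a slab $[a,b]\times K$ containing $(t_0,x_0)$ and note that since $u\le u(t_0,x_0)$ everywhere, one has $v_\e(t,x)\le u(t_0,x_0)+\e t^2$, so the maximum of $v_\e$ over the slab is attained at some $(t_1,x_1)$ with $t_1\ge t_0$; repeating with the slab $[t_0,b]$ forces an interior-in-time maximizer unless the maximum sits at $t=b$ or $x_1\in V_0$, and one then argues by varying $b$ or using the strict inequality $v_\e(t_0,x_0)>v_\e(t_0,p)$ for $p\in V_0$ when $u(t_0,p)<u(t_0,x_0)$. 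In any case, this is all beside the point for Lemma~\ref{maxlem}.
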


\begin{proof}[Proof of Theorem \ref{maxprthm}]
For $n\in\Z_+$, set $u_n = u + t^2/n$. Suppose $u_n$ takes its maximum at an
interior point $(t_0,x_0)\in\R_+\times(K\setminus V_0)$. Then
\[
\frac{\partial^2 u_n}{\partial t^2}(t_0,x_0) \le 0.
\]
But
\[
\frac{\partial^2u_n}{\partial t^2} = \frac{\partial^2u}{\partial t^2} +
\frac{2}{n} = -\Delta u + \frac{2}{n},
\]
so
\[
\Delta u(t_0,x_0) = - \frac{\partial^2u_n}{\partial t^2}(t_0,x_0) +
\frac{2}{n} > 0,
\]
contradicting Lemma \ref{maxlem} because the function
\[
x\mapsto u(t_0,x) = u_n(t_0,x) - \frac{t^2_0}{n}
\]
takes its maximum at $x_0\in K\setminus V_0$.
\end{proof}

From Theorem \ref{maxprthm}, if $u$ is a bounded continuous function on
$[0,\infty)\times K$ which is harmonic on $\R_+\times K$, then, if $u$ takes 
its maximum in $(t_0,x_0)$, then $t_0=0$ or $x_0\in V_0$. 

Moreover, by taking $-u$, we can similarly conclude that its minimum, if taken
by $u$ at $(t_0,x_0)$, must satisfy $t_0=0$ or $x_0\in V_0$. We have the
following corollary.

\begin{corollary}\label{maxcor}
Let $u$ be continuous in $[0,\infty)\times K$, harmonic on $\R_+\times K$,
and $0\le a < b$. If $u(a,x)\ge 0$ and $u(b,x)\ge 0$ for every $x\in K$, and
$u(t,p) \ge 0$ for every $p\in V_0$ and $a\le t \le b$, then $u(t,x)\ge 0$ for 
every $(t,x)\in[a,b]\times K$.
\end{corollary}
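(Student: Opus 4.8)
The plan is to apply Theorem \ref{maxprthm} (and its consequence for functions continuous up to the boundary $t=0$) to the function $v = -u$ restricted to the compact slab $[a,b]\times K$. First I would note that $v$ is continuous on $[a,b]\times K$ and harmonic on $(a,b)\times(K\setminus V_0)$ in the sense that $v(\cdot,x)\in C^2$, $v(t,\cdot)\in\D$, and $\partial_t^2 v+\Delta v=0$; in particular $v$ inherits from $u$ all the hypotheses needed for the maximum principle argument. By Theorem \ref{maxprthm}, applied on the open set $(a,b)\times(K\setminus V_0)$ exactly as in the proof given for $\R_+\times(K\setminus V_0)$ (the argument there is purely local in $t$, so it works verbatim on any interval $(a,b)$), $v$ cannot attain its maximum over $[a,b]\times K$ at an interior point of $(a,b)\times(K\setminus V_0)$.

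Next I would identify where the maximum of $v$ on the compact set $[a,b]\times K$ can occur. By compactness it is attained at some $(t_0,x_0)$, and by the previous step we must have $t_0\in\{a,b\}$ or $x_0\in V_0$. The hypotheses say precisely that $u\ge 0$ at all such points: $u(a,x)\ge0$ and $u(b,x)\ge0$ for all $x\in K$, and $u(t,p)\ge0$ for $p\in V_0$, $a\le t\le b$. Hence $v=-u\le 0$ at every possible location of its maximum, so $\max_{[a,b]\times K} v \le 0$, i.e.\ $u(t,x)\ge 0$ throughout $[a,b]\times K$.

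The only point requiring a little care — and the place I would expect to do slightly more than cite — is checking that the proof of Theorem \ref{maxprthm} genuinely localizes to the finite interval $[a,b]$: the perturbation trick there uses $u_n=u+t^2/n$ and only the second $t$-derivative at an interior maximum, which is a purely local computation in $t$, so no boundedness of $u$ near $t=\infty$ is needed and the conclusion holds on $(a,b)$. One should also observe that ``harmonic on $\R_+\times K$'' in the sense of the definition (with \eqref{Lapeq} holding for $x\in K\setminus V_0$) restricts to the same property on $(a,b)$, so Lemma \ref{maxlem} applies to $x\mapsto u(t_0,x)$ at any interior maximizer $x_0\in K\setminus V_0$. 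With that observed, the corollary is immediate.
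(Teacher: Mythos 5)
Your argument is exactly the paper's (implicit) proof: the paper states Corollary \ref{maxcor} as an immediate consequence of Theorem \ref{maxprthm} applied to $u$ and $-u$, with the minimum of $u$ over the slab forced onto $\{a,b\}\times K\cup[a,b]\times V_0$, where the hypotheses give nonnegativity. Your extra observation that the perturbation argument with $u_n=u+t^2/n$ and Lemma \ref{maxlem} localizes to the interval $(a,b)$ is precisely the point the paper leaves implicit, so the proposal is correct and follows the same route.
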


We can clearly extend this corollary to any open set in $\R_+\times K$.

\begin{corollary}\label{maxcor-open}
Let $u$ be continuous in $[0,\infty)\times K$, harmonic on $\R_+\times K$ and
$\Omega\subset\R_+\times K$ an open set. If $u(t,x)\ge 0$ for
$(t,x)\in\partial\Omega$, then $u(t,x)\ge 0$ for every $(t,x)\in\Omega$.
\end{corollary}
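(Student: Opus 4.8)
The plan is to deduce this from Theorem \ref{maxprthm} (equivalently, from Corollary \ref{maxcor}) by the same reasoning that turns a maximum principle into a comparison principle in the classical theory. Since conditions (1)--(3) defining a harmonic function are linear, $v:=-u$ is again continuous on $[0,\infty)\times K$ and harmonic on $\R_+\times K$, and the hypothesis becomes $v\le 0$ on $\partial\Omega$; it suffices to show $v\le 0$ on $\Omega$.

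Assume for contradiction that $v(t_1,x_1)>0$ for some $(t_1,x_1)\in\Omega$. We may assume $\overline\Omega$ is compact (in all our applications $\Omega$ is bounded and $u$ is a Poisson integral), so $v$ attains its maximum $M$ over $\overline\Omega$ at some $(t_0,x_0)$, with $M\ge v(t_1,x_1)>0$. Since $v\le 0$ on $\partial\Omega$ while $M>0$, the point $(t_0,x_0)$ cannot lie on $\partial\Omega$; hence $(t_0,x_0)\in\Omega$, and in particular $t_0>0$ because $\Omega\subset\R_+\times K$. Thus $v$ has a maximum, attained at an interior point of $\Omega$ with positive $t$-coordinate. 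Applying the maximum principle to $v$ --- in the form of Theorem \ref{maxprthm}, whose proof via Lemma \ref{maxlem} only uses that $\partial^2 v/\partial t^2\le 0$ at an interior maximum in $t$ and that $\Delta v(x_0)\le 0$ when $x_0\in K\setminus V_0$ is a spatial maximum --- forces $x_0\in V_0$. Since the Laplace equation \eqref{Lapeq} is imposed only off $V_0$, the finite set $\overline\Omega\cap(\R_+\times V_0)$ must be counted as part of the distinguished boundary, exactly as the hypothesis on $[a,b]\times V_0$ does in Corollary \ref{maxcor}; under that reading $v\le 0$ there as well, contradicting $v(t_0,x_0)=M>0$. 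Hence no such $(t_1,x_1)$ exists and $u\ge 0$ on $\Omega$.

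The step needing care on the fractal is the fourth one: Theorem \ref{maxprthm} and Lemma \ref{maxlem} are phrased for a maximum over all of $K$ in the spatial variable, whereas here $x\mapsto v(t_0,x)$ is only known to be maximal over $\{x:(t_0,x)\in\Omega\}$, which need not be all of $K$. What one really wants is the local version of Lemma \ref{maxlem}: if $x_0\in K\setminus V_0$ is a local maximum of $w\in\D$, then $\Delta w(x_0)\le 0$, which holds because $\Delta w(x_0)$ is a limit of expressions $c_m\sum_{y}(w(y)-w(x_0))$ over the neighbours $y$ of $x_0$ in $V_m$, each summand being nonpositive once $m$ is large enough that all such $y$ lie in the neighbourhood of $x_0$ on which $w\le w(x_0)$. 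With this localization the argument of Theorem \ref{maxprthm} goes through verbatim; alternatively one encloses $(t_0,x_0)$ in a cylinder $[a,b]\times K\subset\R_+\times K$ and applies Corollary \ref{maxcor} to $v$ there after comparing it with its values on the distinguished boundary of that cylinder. Everything else is the routine extremum argument.
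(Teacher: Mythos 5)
Your route---run the $t^{2}/n$ perturbation of Theorem \ref{maxprthm} on $v=-u$ over $\overline\Omega$, using a local form of Lemma \ref{maxlem}---is exactly what the paper's one-line ``we can clearly extend this corollary'' has in mind, and your two caveats are well taken: the points of $\R_+\times V_0$ lying inside $\Omega$ really must be adjoined to the distinguished boundary (as Corollary \ref{maxcor} does; without this the literal statement already fails for $K=[0,1]$ and a relative half-disk centered at an endpoint), and some compactness of $\overline\Omega$ is genuinely needed (on $K=[0,1]$ the function $u(t,x)=(2e^{-\pi t}-e^{-2\pi}e^{\pi t})\sin(\pi x)$ is harmonic, vanishes on $\R_+\times V_0$, is nonnegative on $\partial\Omega=\{1\}\times K$ for $\Omega=(1,\infty)\times K$, yet is negative for large $t$). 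These are defects of the statement rather than of your argument, but be aware that ``we may assume $\overline\Omega$ is compact because that is how it is used'' is an added hypothesis, not a proof of the corollary as written.

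The genuine gap is in the step you yourself single out. Your justification of the local Lemma \ref{maxlem}---``$\Delta w(x_0)$ is a limit of $c_m\sum_y(w(y)-w(x_0))$ over the neighbours $y$ of $x_0$ in $V_m$''---only makes sense when $x_0\in V_*\setminus V_0$: a point outside $V_*$ is not a vertex of any graph $V_m$ and has no neighbours to sum over, and a maximum point of $v(t_0,\cdot)$ has no reason to be a junction point. For $x_0\notin V_*$ one must localize differently: choose $m$ large and $w\in W_m$ with $x_0\in K_w$ and $K_w$ contained in the neighbourhood where $v(t_0,\cdot)\le v(t_0,x_0)$; then $v(t_0,\cdot)\circ F_w\in\D$ attains its maximum over all of $K$ at $F_w^{-1}(x_0)$, which lies in $K\setminus V_0$ because $x_0\notin V_*\supset F_w(V_0)$, so Lemma \ref{maxlem} together with the scaling identity $\Delta(f\circ F_w)=r_w\mu_w\,(\Delta f)\circ F_w$ yields $\Delta v(t_0,\cdot)(x_0)\le 0$. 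Both cases are needed (for $x_0\in V_*\setminus V_0$ the cell argument fails, since such a point lies on the boundary of every small cell containing it, and there your graph computation is the correct one). Your fallback---enclosing $(t_0,x_0)$ in a cylinder $[a,b]\times K$ and invoking Corollary \ref{maxcor} there---does not work: on the distinguished boundary $\{a,b\}\times K$ and $[a,b]\times V_0$ of such a cylinder you have no sign information about $v$, since those points need not meet $\overline\Omega$, and where they lie inside $\Omega$ nothing is known. With the cell localization supplied, and the amended hypotheses made explicit, your argument closes.
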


\subsection{Harmonic functions and boundary limits}
We have observed that, if $u(t,x)$ is either the Dirichlet or Neumann Poisson
integral of $f\in L^p(K,d\mu)$, then $u$ is a harmonic function on $\R_+\times
K$. Moreover, Theorem \ref{approx-ident} implies that 
\[
\sup_{t>0} ||u(t,\cdot)||_{L^p(K,d\mu)} < \infty
\]
if $1\le p\le\infty$. We now prove the converse for uniformly bounded Dirichlet
harmonic functions, a result analogous to the classical Fatou's theorem 
\cite{ABR}.

One can easily observe that if $u(t,x)$ is the Dirichlet Poisson integral of a
function on $K$, then $u(t,p)=0$ for every $p\in V_0$ and $t>0$, as the
Dirichlet Poisson kernel satisfies $P^D(t,p,y) = 0$ for $p\in V_0$.

We say that a continuous function $u$ on $\R_+\times K$ is a \emph{Dirichlet
harmonic function} if $u$ is harmonic and $u(t,p) = 0$ for every $t>0$ and
$p\in V_0$.

\begin{theorem}\label{Fatou}
Let $u$ be a Dirichlet harmonic function on $\R_+\times K$ such that
\[
\sup_{t>0} ||u(t,\cdot)||_{L^\infty(K,d\mu)} < \infty.
\]
Then $u$ is the Dirichlet Poisson integral of a function
$f\in L^\infty(K,d\mu)$.
\end{theorem}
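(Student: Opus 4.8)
The plan is to mimic the classical Fatou argument: use the $L^\infty$ bound to extract a weak-* limit along a sequence $t_k\to 0$, call it $f\in L^\infty(K,d\mu)$, form its Dirichlet Poisson integral $v(t,x)=\Pois^D_t f(x)$, and show $u=v$. The key identity needed is the semigroup reproducing property: for fixed $s>0$, the function $x\mapsto u(s,x)$ is continuous with $u(s,\cdot)|_{V_0}=0$, so I expect $u(t+s,x)=\Pois^D_t\big(u(s,\cdot)\big)(x)$ for all $t>0$. To justify this, fix $s>0$ and consider $w(t,x)=u(t+s,x)-\Pois^D_t(u(s,\cdot))(x)$; both terms are Dirichlet harmonic on $\R_+\times K$, both are continuous up to $t=0$ (the second by Theorem~\ref{approx-ident}(2) applied to the continuous boundary datum $u(s,\cdot)$, together with the approximation-to-identity Proposition~\ref{identPois} once we note $u(s,\cdot)\in C(K)$ vanishes on $V_0$), and $w\to 0$ as $t\to 0$, $w=0$ on $V_0$. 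So $w$ is continuous on $[0,\infty)\times K$, vanishes on the parabolic boundary, and $w(t,\cdot)\to 0$ in $L^\infty$ as $t\to\infty$ as well (both pieces are bounded and the Poisson semigroup decays); applying Corollary~\ref{maxcor} (or \ref{maxcor-open}) to $\pm w$ on strips $[\epsilon,T]\times K$ and letting $\epsilon\to0$, $T\to\infty$ forces $w\equiv 0$.

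Next I would pass to the limit. From $u(t+s,x)=\Pois^D_t(u(s,\cdot))(x)=\int_K P^D(t,x,y)u(s,y)\,d\mu(y)$, I want to send $s=t_k\to 0$ with $t$ fixed. Since $\sup_s\|u(s,\cdot)\|_\infty=:M<\infty$, the net $\{u(s,\cdot)\}$ lies in a weak-* compact ball of $L^\infty(K,d\mu)=L^1(K,d\mu)^*$, so there is a sequence $t_k\downarrow 0$ and $f\in L^\infty$ with $\|f\|_\infty\le M$ and $\int_K u(t_k,\cdot)g\,d\mu\to\int_K fg\,d\mu$ for every $g\in L^1$. Taking $g(\cdot)=P^D(t,x,\cdot)\in L^1(K,d\mu)$ (it is continuous, by Corollary~\ref{PoissonProp}(1)), the right-hand side converges to $\Pois^D_t f(x)$, while the left-hand side is $u(t+t_k,x)\to u(t,x)$ by continuity of $u$ in the $t$-variable (Theorem~\ref{maxprthm}'s hypotheses, or simply $u(\cdot,x)\in C^2$). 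Hence $u(t,x)=\Pois^D_t f(x)$ for every $(t,x)\in\R_+\times K$, which is exactly the assertion.

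The main obstacle, and the step deserving the most care, is establishing the reproducing identity $u(t+s,x)=\Pois^D_t(u(s,\cdot))(x)$ — everything downstream is soft functional analysis. The delicate points there are: (i) confirming $u(s,\cdot)\in C(K)$ and $u(s,\cdot)|_{V_0}=0$, so that Proposition~\ref{identPois}(2) legitimately gives $\Pois^D_t(u(s,\cdot))\to u(s,\cdot)$ uniformly as $t\to 0^+$ and hence the auxiliary function $w$ extends continuously to $t=0$ with $w(0,\cdot)\equiv 0$; and (ii) controlling $w$ as $t\to\infty$ so that the maximum principle can be applied on an unbounded strip. For (ii) one uses that $\|u(t,\cdot)\|_\infty\le M$ uniformly while $\|\Pois^D_t(u(s,\cdot))\|_\infty\le M$ as well (the Dirichlet Poisson semigroup is a contraction on $L^\infty$, since $P^D\ge 0$ and $\int_K P^D(t,x,y)\,d\mu(y)\le 1$ by Lemma~\ref{int1} and positivity), so $|w|\le 2M$ on all of $\R_+\times K$; then one applies Corollary~\ref{maxcor-open} to $2M-w\cdot\chi$ — more cleanly, apply the maximum principle to $\pm w + \eta t$ on $[\epsilon,T]\times K$, use that $w$ vanishes on $\{0\}\times K$, on $V_0$, and is bounded at $t=T$, and let the parameters go to their limits to squeeze $w\equiv 0$. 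A routine but necessary check is that $\Pois^D_t(u(s,\cdot))$ is genuinely a Dirichlet harmonic function in the sense defined in Section~\ref{Fatou-sect}, which is immediate from Theorem~\ref{Psemi} and the vanishing of $P^D$ on $V_0$.
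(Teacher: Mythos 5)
Your proposal follows essentially the same route as the paper's proof: the reproducing identity $u(t+s,\cdot)=\Pois^D_t\big(u(s,\cdot)\big)$ is established exactly as in the paper (which takes $s=1/n$), by applying the maximum principle of Corollary \ref{maxcor} to $\pm w + \eta t$ on a strip $[0,T]\times K$ and letting $\eta\to 0$, $T\to\infty$, and the conclusion then follows by the same weak-$*$ compactness argument in $L^\infty(K,d\mu)$ tested against $\psi=P^D(t,x,\cdot)\in L^1(K,d\mu)$ together with continuity of $u$ in $t$. The only cosmetic difference is that the paper bounds $\|\Pois^D_t(u(s,\cdot))\|_\infty$ by the maximal-function estimate of Theorem \ref{approx-ident}(1) rather than your $L^\infty$-contraction claim (note Lemma \ref{int1} only gives $\int_K P^D(t,x,y)\,d\mu(y)\to 1$ as $t\to 0$, not an upper bound $\le 1$ for all $t$), but since only uniform boundedness of $w$ is needed this does not affect the argument.
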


\begin{proof}
Let $u$ be a Dirichlet bounded harmonic function on $\R_+\times K$, and $M>0$
such that $|u(t,x)|\le M$ for every $(t,x)\in\R_+\times K$. 

For $x\in K$ and $n\in\Z_+$, set $f_n(x) = u(1/n,x)$, and let $u_n(t,x)$ be 
the Dirichlet Poisson integral of $f_n$. We define then, for
$(t,x)\in\R_+\times K$,
\[
U_n(t,x) = u\big(t+\frac{1}{n},x\big) - u_n(t,x).
\]
We claim that $U_n(t,x)\equiv 0$. First note that $U_n$ is bounded, since
$|u|\le M$ and, by Theorem \ref{approx-ident}(1),
\[
|u_n(t,x)| \le A Mf_n(x) \le A||Mf_n||_{L^\infty(K)} \le A||f_n||_{L^\infty(K)}
\le AM.
\]
Thus $|U_n(t,x)|\le (A+1)M = A'M$. Moreover, since $f_n$ is continuous,
$u_n(t,x)$ can be extended to $t=0$ with $u_n(0,x)=f_n(x)$, and hence
\[
U_n(0,x) = u\big(\frac{1}{n},x\big)-f_n(x) = 0
\]
for every $x\in K$. Fix $(t_0,x_0)\in\R_+\times K$ and $\e>0$ such that
$\dfrac{1}{\e}>t_0$. Define, for $(t,x)\in[0,\infty)\times K$,
\[
U(t,x) = U_n(t,x) + A'M\e t.
\]
Then $U$ is continuous in $[0,\infty)\times K$ and harmonic on $\R_+\times K$.
Moreover, for $t=0$, $U(0,x) = 0$ for every $x\in K$ and, for
$t=\dfrac{1}{\e}$,
\[
U\big(\frac{1}{\e},x\big) = U_n\big(\frac{1}{\e},x\big) + A'M \ge 0,
\]
since $|U_n(t,x)|\le A'M$. Finally, as $U_n(t,p)=0$ for $p\in V_0$, we have
\[
U(t,p) = A'Mt\e \ge 0
\]
for $p\in V_0$. Therefore, by Corollary \ref{maxcor}, $U(t,x)\ge 0$, and hence
\[
U_n(t_0,x_0) \ge -A'M\e t_0.
\]
Since $\e$ is arbitrary, $U_n(t_0,x_0) \ge 0$.

Similarly, taking $-U_n(t,x)$, we can conclude that $U_n(t_0,x_0)\le 0$, and
therefore $U_n(t_0,x_0)=0$ for any $(t_0,x_0)\in[0,\infty)\times K$.

This shows that, for any $n$, $u(t+1/n,x)$ is the Dirichlet Poisson integral of
$f_n$, \ie
\[
u\big(t+\frac{1}{n},x\big) = \int_K P^D(t,x,y) f_n(y) d\mu(y).
\]
Now, for every $n$, $||f_n||_{L^\infty} \le M$, so by the weak-$*$ compactness 
of the ball in $L^\infty(K,d\mu)$, there is a subsequence $f_{n_k} \to f$ 
weakly in $L^\infty(K,d\mu)$, and hence
\[
\int_K \psi(y) f_{n_k}(y) d\mu(y) \to \int_K \psi(y) f(y) d\mu(y)
\]
for any $\psi\in L^1(K,d\mu)$. Taking, for each $t>0$ and $x\in K$, 
$\psi(y) = P^D(t,x,y)$, we obtain, by the continuity of $u(t,x)$, that
\[
u(t,x) = \int_K P^D(t,x,y) f(y) d\mu(y).
\]
Therefore $u(t,x)$ is the Dirichlet Poisson integral of the $L^\infty$ function
$f$.
\end{proof}

So we finally have, from Theorems \ref{nontanthm} and \ref{Fatou}, the
following corollary.

\begin{corollary}\label{Fatou-nontan}
Then a bounded Dirichlet harmonic function on $\R_+\times K$ has nontangential
limit at $x$ as $t\to 0$ for almost every $x\in K$.
\end{corollary}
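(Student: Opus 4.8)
The plan is to combine the two preceding results directly. Theorem~\ref{Fatou} tells us that any bounded Dirichlet harmonic function $u$ on $\R_+\times K$ is in fact the Dirichlet Poisson integral of some $f\in L^\infty(K,d\mu)$; and Theorem~\ref{nontanthm}(2), applied with any fixed $\alpha>0$, tells us that the Dirichlet Poisson integral of an $L^p$ function (in particular of an $L^\infty$ function, since $K$ has finite measure) has nontangential limit equal to $f(x)$ for almost every $x\in K$. Chaining these two facts is essentially the whole argument.

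\begin{proof}[Proof of Corollary~\ref{Fatou-nontan}]
Let $u$ be a bounded Dirichlet harmonic function on $\R_+\times K$. By Theorem~\ref{Fatou}, there is a function $f\in L^\infty(K,d\mu)$ such that $u(t,x) = \Pois^D_t f(x)$ for every $(t,x)\in\R_+\times K$. Since $\mu(K)<\infty$, we have $f\in L^p(K,d\mu)$ for every $1\le p\le\infty$; in particular $f\in L^1(K,d\mu)$, so Theorem~\ref{nontanthm}(2) applies. Fix any $\alpha>0$. By that theorem, for almost every $x\in K$,
\[
\lim_{\substack{(t,y)\to(0,x)\\(t,y)\in\Gamma_\alpha(x)}} u(t,y) = f(x).
\]
Hence $u$ has nontangential limit at $x$ as $t\to 0$ for almost every $x\in K$, which is the assertion of the corollary.
\end{proof}

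Since both ingredients are already established in the excerpt, there is no genuine obstacle here; the only point worth a line of care is the remark that finiteness of $\mu$ lets one pass from the $L^\infty$ conclusion of Theorem~\ref{Fatou} to the $L^p$ (in fact $L^1$) hypothesis needed to invoke Theorem~\ref{nontanthm}, and that the exceptional null set may be taken independent of the aperture $\alpha$ once $\alpha$ is fixed (the statement of the corollary is for a fixed cone family, matching the $\Gamma_\alpha$ used throughout Section~\ref{kernels-sect}).
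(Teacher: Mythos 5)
Your proof is correct and is exactly the paper's argument: the corollary is stated there as an immediate consequence of Theorems \ref{Fatou} and \ref{nontanthm}. (One small simplification: Theorem \ref{nontanthm} already allows $p=\infty$, so the detour through $L^1$ via finiteness of $\mu$ is unnecessary, though harmless.)
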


As in the classical setting \cite[Chapter VII]{Stein70}, Theorem \ref{Fatou} 
can be extended to the case $1\le p < \infty$. Together with Theorem
\ref{approx-ident} and Remark \ref{approx-ident-Borel}, we obtain the following
Corollary.

\begin{corollary}\label{Fatou-nontan-p}
Suppose $u$ is a Dirichlet harmonic function on $\R_+\times K$. If,
$1< p\le\infty$, then $u$ is the Poisson integral of some $f\in L^p$ if and
only if
\[
\sup_{t>0} ||u(t,\cdot)||_{L^p} < \infty.
\]
Moreover, $u$ is the Poisson integral of some finite Borel measure on $K$ if
and only if
\[
\sup_{t>0} ||u(t,\cdot)||_{L^1} < \infty.
\]
\end{corollary}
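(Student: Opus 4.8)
The plan is to adapt the proof of Theorem \ref{Fatou} almost verbatim, replacing the weak-$*$ compactness of the ball in $L^\infty$ with the appropriate substitute in each range of $p$. The ``only if'' directions are already handled by Theorem \ref{approx-ident}(2) (which gives $\sup_t\|u(t,\cdot)\|_{L^p}<\infty$ for $1\le p<\infty$, and the $p=\infty$ case is immediate from $\|P^b_t f\|_\infty\le\|f\|_\infty$) and by Remark \ref{approx-ident-Borel} (which gives the uniform $L^1$ bound for Poisson integrals of finite Borel measures). So the content is the ``if'' direction.

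First I would repeat the core identity from the proof of Theorem \ref{Fatou}: setting $f_n(x)=u(1/n,x)$, forming $U_n(t,x)=u(t+1/n,x)-\Pois^D_t f_n(x)$, and running the exact same maximum-principle argument (via Corollary \ref{maxcor}, using that $U_n$ is bounded because $\|\Pois^D_t f_n\|_\infty\le A Mf_n \le A\|f_n\|_\infty$ and $f_n$ is continuous, that $U_n(0,\cdot)=0$, and that $U_n(t,p)=0$ on $V_0$) to conclude $U_n\equiv 0$, i.e.
\[
u\big(t+\tfrac1n,x\big)=\int_K P^D(t,x,y) f_n(y)\, d\mu(y).
\]
Nothing in this step used $p=\infty$; it only used that each $f_n$ is continuous and that $\|f_n\|_{L^\infty}$ is finite (for fixed $n$, since $u(1/n,\cdot)\in\D\subset C(K)$). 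So this identity holds under either hypothesis.

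The remaining step is the compactness/limit extraction, which is where the cases diverge and which is the only real obstacle. For $1<p<\infty$: by hypothesis $\|f_n\|_{L^p}=\|u(1/n,\cdot)\|_{L^p}\le \sup_t\|u(t,\cdot)\|_{L^p}<\infty$, so by reflexivity of $L^p(K,\mu)$ the bounded sequence $(f_n)$ has a weakly convergent subsequence $f_{n_k}\rightharpoonup f\in L^p$; testing against $\psi(\cdot)=P^D(t,x,\cdot)\in L^{p'}(K,\mu)$ (bounded, since $P^D$ is continuous on $[T,\infty)\times K\times K$ and $K$ is compact) and using continuity of $u$ in $t$ gives $u(t,x)=\int_K P^D(t,x,y)f(y)\,d\mu(y)$, as desired; the $p=\infty$ case is exactly Theorem \ref{Fatou}. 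For $p=1$: the bound is only $\sup_n\|f_n\|_{L^1}<\infty$, and $L^1$ is not a dual space, so instead I view $f_n\,d\mu$ as a sequence in the space $\mathcal M(K)$ of finite Borel measures, which is the dual of $C(K)$; by Banach--Alaoglu the ball of $\mathcal M(K)$ is weak-$*$ compact, so a subsequence $f_{n_k}\,d\mu\rightharpoonup \nu$ weak-$*$ against $C(K)$. Since for fixed $t>0,x\in K$ the function $y\mapsto P^D(t,x,y)$ lies in $C(K)$, we get $u(t+1/n_k,x)=\int_K P^D(t,x,y)f_{n_k}(y)\,d\mu(y)\to\int_K P^D(t,x,y)\,d\nu(y)=\Pois^D_t\nu(x)$, while the left side tends to $u(t,x)$ by continuity; hence $u=\Pois^D_t\nu$. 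The minor subtlety to note is that the approximation-to-identity and maximal-function estimates needed in the background (Remark \ref{approx-ident-Borel}) already cover Poisson integrals of measures, so no new estimate is required; the only genuinely new ingredient beyond the proof of Theorem \ref{Fatou} is choosing the right dual-space compactness in each range, and checking that the test function $P^D(t,x,\cdot)$ lies in the corresponding predual ($L^{p'}$ when $1<p<\infty$, $C(K)$ when $p=1$).
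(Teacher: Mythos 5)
Your ``only if'' direction and your final compactness step are fine (and your elaboration of the latter --- weak compactness in $L^p$ via reflexivity for $1<p<\infty$, weak-$*$ compactness of finite Borel measures as the dual of $C(K)$ for the $L^1$-bounded case, testing against $P^D(t,x,\cdot)$ --- is exactly what the paper means by ``a similar weak-$*$ argument as in the proof of Theorem \ref{Fatou}''). The gap is in your claim that the maximum-principle step of Theorem \ref{Fatou} goes through ``verbatim'' using only the continuity of each slice $f_n=u(1/n,\cdot)$. That step needs $|U_n(t,x)|\le A'M$ \emph{uniformly in $t>0$}: the barrier is $U(t,x)=U_n(t,x)+A'M\e t$, and nonnegativity at the top slice $t=1/\e$ is precisely the uniform lower bound $U_n(1/\e,x)\ge -A'M$, where $1/\e\to\infty$ as $\e\to0$. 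You bound the term $\Pois^D_t f_n(x)$ by $A\|f_n\|_\infty$, but the other term $u(t+1/n,x)$ is only known to be bounded on compact time intervals (by continuity); under the hypothesis $\sup_{t>0}\|u(t,\cdot)\|_{L^p}<\infty$ with $p<\infty$ there is no a priori uniform-in-$t$ sup-norm bound, and if $\sup_x|u(t,x)|$ grew (say, linearly) in $t$ the barrier argument would not close. So the assertion ``nothing in this step used $p=\infty$'' is exactly where the proof breaks: Theorem \ref{Fatou} used the global bound $|u|\le M$, not just finiteness of $\|u(1/n,\cdot)\|_\infty$ for each fixed $n$.

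This is the point the paper's proof handles before it ever invokes Theorem \ref{Fatou}: using the semigroup property together with H\"older's inequality and the kernel bound of Lemma \ref{estPois} (with the same annular decomposition as in Theorem \ref{approx-ident}), it first establishes
\[
|u(t,x)|\lesssim \Big(\int_K \big|P^D(t-t_0,x,y)\big|^q\,d\mu(y)\Big)^{1/q}
\lesssim (t-t_0)^{-\frac{2d}{d+1}\cdot\frac{1}{p}},
\]
uniformly in $x$ and $t_0$, hence $|u(t,x)|\lesssim t^{-\frac{2d}{d+1}\cdot\frac{1}{p}}$ uniformly in $x$. This a priori estimate makes the time-shifted function $u(\cdot+1/k,\cdot)$ a \emph{bounded} Dirichlet harmonic function, so Theorem \ref{Fatou} applies and gives the representation $u(t+1/k,x)=\Pois^D_t f_k(x)$ with $f_k=u(1/k,\cdot)$; only then does your compactness argument take over. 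To repair your write-up, insert this sup-norm estimate (or some substitute showing that $\sup_x|u(t,x)|$ grows sublinearly as $t\to\infty$) before the maximum-principle step; as written, the claimed boundedness of $U_n$ is unjustified and the rest of the argument cannot start.
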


\begin{proof}
If $u$ is the Dirichlet Poisson integral of either some $f\in L^p$ or a finite
Borel measure on $K$, the estimates follow from Theorem \ref{approx-ident} and
Remark \ref{approx-ident-Borel}.

Now, suppose $p<\infty$ and $\sup_{t>0} ||u(t,\cdot)||_{L^p} < \infty$. Since
$\{\Pois^D_t\}_{t>0}$ is a semigroup, for any 
$t > t_0 > 0$,
\[
|u(t,x)| \le \int_K P^D(t-t_0,x,y)|u(t_0,y)| d\mu(y) \lesssim
\Big( \int_K \big| P^D(t-t_0,x,y) \big|^q d\mu(y) \Big)^{1/q},
\]
where $q$ is the conjugate exponent to $p$, independently of $t_0$ because
$||u(t,\cdot)||_{L^p}$ is uniformly bounded in $t>0$. From Lemma 
\ref{estPois}, and the same decomposition as in the proof of Theorem
\ref{approx-ident}, one proves 
\[
\Big( \int_K \big| P^D(t-t_0,x,y) \big|^q d\mu(y) \Big)^{1/q} \lesssim
(t - t_0)^{-\frac{2d}{d+1} \cdot \frac{1}{p}},
\]
uniformly in $t$, $t_0$ and $x$, so we have 
$u(t,x) \lesssim t^{-\frac{2d}{d+1} \cdot \frac{1}{p}}$ uniformly in $x$.
By Theorem \ref{Fatou}, $u(t + 1/k,x) = \Pois^D f_k(x)$, where
$f_k(x) = u(1/k,x)$. Since, by assumption, the $L^p$ norms of $f_k$ are
uniformly bounded, the Corollary follows from a similar weak-$*$ argument as
in the proof of Theorem \ref{Fatou}.
\end{proof}

\section{A local Fatou theorem}\label{local-sect}

In this section we prove a local Fatou theorem, analogous to the classical
nontangential convergence at the boundary of \emph{nontangentially bounded}
harmonic functions (\cite[Thm. VII.3]{Stein70}, \cite[Thm. 7.30]{ABR}).

As we'll need estimates from below for the Neumann Poisson kernel, we can only
prove these for the so called \emph{nested} fractals \cite{Lindstrom}, 
which we define below.

\subsection{Nested fractals}

We begin by defining the concept of affine nested fractals. We now assume that
$(K,S,\{F_i\}_{i\in S})$ is a connected post-critically finite self-similar
set, $K\subset \R^n$ and each $F_i$ is the restriction to $\R^n$ of a
similitude $F_i:\R^n\to\R^n$, that is, a map of the form $x\mapsto cUx + a$,
where $0 < c < 1$, $a\in\R^n$ and $U\in O(n)$.

We say that a homeomorfism $f:K\to K$ is a \emph{symmetry} of 
$(K,S,\{F_i\}_{i\in S})$ if, for every $m\ge 0$, there exists a map 
$f_m:W_m\to W_m$ such that
\[
f(F_w(V_0)) = F_{f_m(w)}(V_0), \qquad\text{for every $w\in W_m$.}
\]
That is, a symmetry preserves the self-similar structure of $K$.

For $x,y\in\R^n$, let $H_{xy}$ be the bisecting hyperplane of the segment from
$x$ to $y$, and $\psi_{xy}:\R^n\to\R^n$ be the reflection with respect to
$H_{xy}$.

\begin{definition}
We say that $(K,S,\{F_i\}_{i\in S})$ is an \emph{affine nested fractal} if
$\psi_{xy}|_K$ is a symmetry for any $x,y\in V_0$.
\end{definition}

In other words, reflecting $K$ by each pair of points in its boundary $V_0$
preserves its self-similar structure. Examples of affine nested fractals are
the Sierpinski gasket, the Vicsek set or the pentakun, among others (see
\cite[Section 3.8]{Kigami} for more examples and a through discussion of affine
nested fractals).

It can be proven \cite[Section 3.8]{Kigami} that if $(K,S,\{F_i\}_{i\in S})$
is an affine nested fractal, then there exist a harmonic structure with all 
$r_i$ equal to each other, say $r_i = r$, $i\in S$. We assume this harmonic
structure is regular, \ie $r < 1$.

The main result of interest to us is the following theorem \cite{FHK}.

\begin{theorem}\label{affine-est}
If $H^N$ is the Neumann heat kernel on the affine nested fractal $K$, there
exist positive constants $c_1, c_2, c_3, c_4$ such that
\begin{multline}
c_1 t^{-d/(d+1)}  \exp \Big( -c_2 \Big(\frac{R(x,y)^{d+1}}{t}\big)^{1/(d_w-1)} 
\Big) \le H^N(t,x,y) \\ \le
c_3 t^{-d/(d+1)}  \exp \Big( -c_4 \Big(\frac{R(x,y)^{d+1}}{t}\big)^{1/(d_w-1)}
\Big),
\end{multline}
where $d_w$ is the \emph{walk dimension} with respect to \emph{shortest path}
metric.
\end{theorem}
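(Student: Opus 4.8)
The plan is to reduce Theorem \ref{affine-est} to the general sub-Gaussian heat kernel estimates available for strongly recurrent diffusions on nested fractals, as established by Fitzsimmons, Hambly and Kumagai in \cite{FHK}. The key point is that the Dirichlet form $(\dir,\F)$ with the self-similar measure $\mu$ of weights $\mu_i = r^d$ on an affine nested fractal is a conservative, strongly local regular Dirichlet form whose associated diffusion satisfies the \emph{uniform volume doubling} property \eqref{scalmu} in the resistance metric $R$ (or, equivalently, in the shortest-path metric to which $R$ is bi-Lipschitz equivalent on $K$). Moreover, the symmetry hypothesis in the definition of an affine nested fractal is exactly what is needed to propagate the on-diagonal bound and the chaining argument through the cell structure, so that the heat kernel $H^N$ is jointly continuous and obeys matching two-sided sub-Gaussian bounds.

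First I would fix the metric: on $K$, the effective resistance metric $R$ and the geodesic (shortest-path) metric are comparable, and with respect to either, $\mu$ satisfies \eqref{scalmu}, giving the Ahlfors $d$-regularity of $(K,R,\mu)$. Second, I would invoke the general equivalence theorem for heat kernels on metric measure Dirichlet spaces: for a strongly local conservative form on an Ahlfors regular space, the two-sided estimate
\[
H(t,x,y)\asymp t^{-d/(d_w)}\exp\!\Big(-c\,\Big(\frac{\rho(x,y)^{d_w}}{t}\Big)^{1/(d_w-1)}\Big)
\]
is equivalent to the conjunction of the Poincar\'e inequality on balls and the cutoff Sobolev (or capacity) inequality with walk dimension $d_w$; here $\rho$ is the shortest-path metric and the exponent $d/d_w$ in our normalization reads $d/(d+1)$ precisely because the harmonic structure is regular and $R$ has Hausdorff dimension $d$ with $d_w = d+1$ in resistance scaling. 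Third, I would verify these two structural inequalities for affine nested fractals: the scaling identity for $\dir$ together with finiteness of $V_0$ gives a Poincar\'e inequality on cells $K_w$ by a standard argument, and the nested (reflection-symmetry) structure lets one build the cutoff functions cell by cell with controlled energy, yielding the cutoff Sobolev inequality. Fourth, Neumann boundary conditions pose no difficulty: $H_N$ corresponds to the full form $(\dir,\F)$ with no boundary restriction, so $H^N$ is simply the heat kernel of the space $(K,R,\mu)$ itself, and the cited bounds apply verbatim.

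The main obstacle, and the reason this theorem is attributed to \cite{FHK} rather than proved from scratch here, is establishing the \emph{lower} bound: the upper sub-Gaussian estimate follows comparatively cheaply from on-diagonal decay plus the Davies–Gaffney/integrated-maximum-principle method, but the off-diagonal lower bound requires a near-diagonal lower estimate (from continuity of $H^N$ and conservativeness) together with a chaining argument along a chain of overlapping cells joining $x$ to $y$, and controlling the number and size of cells in such a chain in terms of $R(x,y)$ is exactly where the self-similar symmetry of the nested fractal is essential. I would therefore present the proof as: cite \cite{FHK} for the diffusion estimates on nested fractals, observe that our $H^N$ is the heat kernel of that diffusion, and translate the exponents into our normalization using $d_w = d+1$ and the identification of the resistance and shortest-path metrics up to bi-Lipschitz equivalence, which only affects the constants $c_1,\dots,c_4$.
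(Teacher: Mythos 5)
The paper offers no proof of this theorem at all: it is imported verbatim from Fitzsimmons--Hambly--Kumagai \cite{FHK}, so your plan of citing \cite{FHK} and only translating the normalization is exactly the paper's treatment; the extra machinery you sketch (Ahlfors regularity, Poincar\'e plus cutoff Sobolev inequalities, chaining for the lower bound) is background the paper never invokes.

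That said, the one step you actually carry out yourself --- the translation of exponents --- contains two genuine errors. First, the effective resistance metric $R$ and the shortest-path metric $\rho$ are \emph{not} bi-Lipschitz equivalent on an affine nested fractal; they are related by a power law, $R(x,y)\asymp \rho(x,y)^{d_w-d_f}$ (on the Sierpi\'nski gasket, for instance, $R(x,y)\asymp|x-y|^{\log(5/3)/\log 2}$), so ``comparable up to constants'' is false. What makes the stated form correct is precisely this power relation: since the Hausdorff dimension of $K$ in the metric $R$ satisfies $d=d_f/(d_w-d_f)$, one gets $\rho(x,y)^{d_w}\asymp R(x,y)^{d+1}$, and substituting this into the FHK estimate (which is phrased in $\rho$) changes only the constants $c_2,c_4$ because the outer exponent $1/(d_w-1)$ is unchanged. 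Second, your identification $d_w=d+1$ conflates two different walk dimensions: $d+1$ is the walk exponent relative to the \emph{resistance} metric, whereas the $d_w$ appearing in the theorem is the walk dimension for the \emph{shortest-path} metric, as the statement itself says. If you literally set $d_w=d+1$, the exponent in the exponential becomes $1/d$, which is the form of Barlow's upper estimate already used in Lemma \ref{estPois}, not the two-sided bound being stated here. So your reduction to \cite{FHK} is the right (and intended) one, but the normalization argument should be replaced by the power-law comparison above rather than a bi-Lipschitz identification of the metrics.
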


A discussion on the shortest path metric and walk dimension can be found in 
\cite{Barlow98}. From Theorem \ref{affine-est} we obtain the following
integral estimate for the Neumann Poisson kernel.

\begin{corollary}\label{affine-Pest}
Let $P^N$ be the Neumann Poisson kernel on the affine nested fractal $K$, and
$\alpha>0$. Then there exists $c_\alpha>0$ such that 
\[
\int_B P^N(t,x,y) d\mu(y) \ge c_\alpha,
\]
where $B = B_{(\alpha t^2)^{1/(d+1)}}(x)$ is the ball of radius 
$(\alpha t^2)^\frac{1}{d+1}$ with center in $x$.
\end{corollary}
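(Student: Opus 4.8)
The plan is to obtain the lower bound by using the subordination formula of Proposition~\ref{subord} together with the two-sided Gaussian-type estimate for the Neumann heat kernel in Theorem~\ref{affine-est}. Write $r = (\alpha t^2)^{1/(d+1)}$, so that $B = B_r(x)$ and, by the measure estimate \eqref{scalmu}, $\mu(B) \sim r^d \sim t^{2d/(d+1)}$. By subordination,
\[
\int_B P^N(t,x,y)\,d\mu(y) = \frac{t}{2\sqrt\pi}\int_0^\infty e^{-t^2/4s}\,s^{-3/2}\Big(\int_B H^N(s,x,y)\,d\mu(y)\Big)\,ds,
\]
with the inner integral nonnegative, so it suffices to bound the contribution of a single favorable range of $s$ from below.

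The key step is to restrict the $s$-integral to $s\in[t^2,2t^2]$ (say). For $s$ in this range, $e^{-t^2/4s}\ge e^{-1/4}$ and $s^{-3/2}\sim t^{-3}$, while for $y\in B$ we have $R(x,y)^{d+1} \le \alpha t^2 \le \alpha s$, so $R(x,y)^{d+1}/s \le \alpha$ and hence the exponential factor in the lower Gaussian bound of Theorem~\ref{affine-est} satisfies $\exp(-c_2(R(x,y)^{d+1}/s)^{1/(d_w-1)}) \ge \exp(-c_2\alpha^{1/(d_w-1)})$, a positive constant depending only on $\alpha$. Therefore $H^N(s,x,y) \ge c_1 s^{-d/(d+1)} e^{-c_2\alpha^{1/(d_w-1)}} \gtrsim_\alpha t^{-2d/(d+1)}$ for all such $s$ and all $y\in B$. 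Integrating over $y\in B$ gives $\int_B H^N(s,x,y)\,d\mu(y) \gtrsim_\alpha t^{-2d/(d+1)}\mu(B) \sim 1$, uniformly for $s\in[t^2,2t^2]$.

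Putting the pieces together,
\[
\int_B P^N(t,x,y)\,d\mu(y) \ge \frac{t}{2\sqrt\pi}\int_{t^2}^{2t^2} e^{-t^2/4s}\,s^{-3/2}\Big(\int_B H^N(s,x,y)\,d\mu(y)\Big)\,ds \gtrsim_\alpha t\cdot t^2\cdot t^{-3}\cdot 1 = 1,
\]
which is the desired bound $\int_B P^N(t,x,y)\,d\mu(y) \ge c_\alpha$. The main obstacle is purely bookkeeping: one must track the various exponents of $t$ and $r$ carefully so that the powers of $t$ cancel (the factor $t$ from subordination, the length $\sim t^2$ of the $s$-interval, the $s^{-3/2}\sim t^{-3}$, and the $t^{-2d/(d+1)}\mu(B)\sim 1$ from the heat-kernel-times-measure estimate), and one must make sure the constant $c_2\alpha^{1/(d_w-1)}$ in the exponent depends only on $\alpha$ and not on $t$, $x$, which it does because $R(x,y)^{d+1}/s$ is bounded by $\alpha$ throughout the chosen range. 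Everything else is a direct application of the cited estimates and \eqref{scalmu}.
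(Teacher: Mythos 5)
Your proof is correct and follows essentially the same route as the paper: subordination (Proposition \ref{subord}), the lower Gaussian-type bound of Theorem \ref{affine-est} with $R(x,y)^{d+1}$ dominated by $\alpha t^2$ on $B$, and the volume estimate \eqref{scalmu} giving $\mu(B)\sim(\alpha t^2)^{d/(d+1)}$. The only (harmless) difference is that you localize the $s$-integral to the window $[t^2,2t^2]$ and bound every factor pointwise there, whereas the paper keeps the full range $(0,\infty)$ and performs the change of variables $s\mapsto \alpha t^2 s$, which in addition yields the explicit dependence $c_\alpha \sim c\,\alpha^{1/2}$ noted after the corollary.
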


\begin{proof}
From Theorem \ref{affine-est} we have
\[
\begin{split}
P^N(t,x,y) &= 
\frac{t}{2\sqrt{\pi}} \int_0^\infty e^{-t^2/4s} H^N(s,x,y) \frac{ds}{s^{3/2}} 
\\ &\gtrsim 
t \int_0^\infty e^{-t^2/4s} s^{-\frac{3d+1}{2(d+1)}} 
e^{-c \big(\frac{R(x,y)^{d+1}}{s}\big)^{1/(d_w-1)}} \frac{ds}{s}.
\end{split}
\]
Thus
\[
\begin{split}
\int_B P^N(t,x,y) d\mu(y) &\gtrsim
\int_B t \int_0^\infty e^{-t^2/4s} s^{-\frac{3d+1}{2(d+1)}} 
e^{-c \big(\frac{R(x,y)^{d+1}}{s}\big)^{1/(d_w-1)}} \frac{ds}{s} d\mu(y)\\
&\ge 
t \int_B \int_0^\infty e^{-t^2/4s} s^{-\frac{3d+1}{2(d+1)}} 
e^{-c (\frac{\alpha t^2}{s})^{1/(d_w-1)}} \frac{ds}{s} d\mu(y),
\end{split}
\]
since $R(x,y)^{d+1} < \alpha t^2$ for $y\in B$.

Now, after the change of variables $s\mapsto \alpha t^2 s$, we finally obtain
\[
\begin{split}
\int_B P^N(t,x,y) &d\mu(y)\\ &\gtrsim
t \cdot \alpha^{-\frac{3d+1}{2(d+1)}} t^{-\frac{3d+1}{d+1}} 
\int_B \int_0^\infty e^{-1/4\alpha s} s^{-\frac{3d+1}{2(d+1)}} 
e^{-c/s^{1/(d_w-1)}} \frac{ds}{s} d\mu(y)\\
&= \alpha^{-\frac{3d+1}{2(d+1)}} t^{-\frac{2d}{d+1}} 
\int_0^\infty e^{-1/4\alpha s} s^{-\frac{3d+1}{2(d+1)}} 
e^{-c/s^{1/(d_w-1)}} \frac{ds}{s} \int_B d\mu(y)\\ &\ge c_\alpha > 0,
\end{split}
\]
because the integral in $s$ converges (and its positive) and 
\[
\int_B d\mu(y) = \mu\big(B_{(\alpha t^2)^{1/(d+1)}}(x)\big) \sim (\alpha
t^2)^\frac{d}{d+1},
\]
by \eqref{scalmu}.
\end{proof}

Note that, in fact, $c_\alpha = c\, \alpha^{1/2}$. Corollary \ref{affine-Pest}
allows us to construct a harmonic function, with nontangential limit zero,
bounded away from zero at the boundary of a union of truncated cones. We define
a \emph{truncated cone} on $\R_+\times K$, for $h,\alpha > 0$ and $x\in K$, as
the set
\[
\Gamma_\alpha^h(x) = 
\{ (t,y)\in\R_+\times K: R(x,y)^{d+1} < \alpha t^2,\, 0 < y < h \}.
\]

\begin{lemma}\label{harmonic-min}
Let $E\subset K$ be a measurable set, $\alpha > 0$, and $\displaystyle
\Omega = \bigcup_{x\in E} \Gamma_\alpha^1(x)$.
Then there exists a positive harmonic function $v$ on $\R_+\times K$ such
that
\begin{enumerate}
\item $v\ge 1$ on $(\partial\Omega) \cap (\R_+\times K)$; and
\item $v$ has nontangential limit $0$ at almost every point of $E$.
\end{enumerate}
\end{lemma}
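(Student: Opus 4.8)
The plan is to build $v$ as a Poisson integral of a suitable density built from Corollary \ref{affine-Pest}. First I would, for each $x\in E$, use the pointwise estimate of Corollary \ref{affine-Pest}: there is $c_\alpha>0$ with $\int_{B_{(\alpha t^2)^{1/(d+1)}}(x)} P^N(t,x,y)\,d\mu(y)\ge c_\alpha$ for all $t>0$. The natural candidate is
\[
v(t,z) = \frac{1}{c_\alpha}\,\Pois^N_t\big(\chi_E\big)(z) = \frac{1}{c_\alpha}\int_E P^N(t,z,y)\,d\mu(y),
\]
which is a nonnegative harmonic function on $\R_+\times K$ by Theorem \ref{Psemi} (applied with $f=\chi_E\in L^\infty\subset L^p$). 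Here I use the Neumann Poisson kernel precisely because its total mass is $1$ and, more importantly, because only for $P^N$ do we have the lower bound of Corollary \ref{affine-Pest}; this is why the hypothesis of an affine nested fractal enters.

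The second step is to verify property (1), the lower bound $v\ge 1$ on $(\partial\Omega)\cap(\R_+\times K)$. A boundary point $(t,z)$ of $\Omega$ with $t>0$ is a limit of points $(t_k,z_k)\in\Gamma_\alpha^1(x_k)$ with $x_k\in E$; by passing to a subsequence $x_k\to x\in\overline{E}$, and using continuity of $P^N$ and of $v$, it suffices to show $v(t,z)\ge 1$ whenever $(t,z)\in\partial\Gamma_\alpha^1(x)$ for some $x\in\overline E$, i.e. $R(x,z)^{d+1}\le\alpha t^2$ (with $t\le 1$). For such $(t,z)$ one has $B_{(\alpha t^2)^{1/(d+1)}}(x)\subset B_{2(\alpha t^2)^{1/(d+1)}}(z)$-type containment, so that after adjusting $\alpha$ to $\alpha'=4\alpha$ (or using a doubling argument on the kernel estimate via the remark after Lemma \ref{estPois}, together with the lower estimate of Theorem \ref{affine-est}), Corollary \ref{affine-Pest} centered appropriately gives $\int_E P^N(t,z,y)\,d\mu(y)\ge\int_{B\cap E}P^N(t,z,y)\,d\mu(y)$. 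The subtlety is that we must integrate over $E$, not over a full ball: but the ball $B_{(\alpha t^2)^{1/(d+1)}}(x)$ need not lie in $E$. This is the main obstacle, and it is resolved by the definition of $\Omega$ as a union of truncated cones over points of $E$: if $(t,z)\in\partial\Gamma_\alpha^1(x)$ with $x\in\overline E$, then the relevant ball around $x$ is essentially filled by points $x'\in E$ with $(t,z)\in\Gamma_{\alpha'}^1(x')$, and one shows the set of such $x'$ has $\mu$-measure comparable to that of the whole ball — so integrating the lower kernel estimate over that subset still yields a constant, which after normalization is $\ge 1$. Concretely: $x'\in E$ with $R(x,x')<\tfrac12(\alpha t^2)^{1/(d+1)}$ satisfies $R(x',z)^{d+1}\lesssim\alpha t^2$, hence lies in a slightly larger cone's spatial slice; running Corollary \ref{affine-Pest} with center $z$ and the lower bound of Theorem \ref{affine-est} over this subcollection gives the claim. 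I would carefully choose the constant (absorbing the $\alpha$-dependence into $c_\alpha$ and then dividing $v$ by $c_\alpha$) so that (1) holds exactly.

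The third step is property (2): $v$ has nontangential limit $0$ at almost every point of $E$. Since $v=\tfrac{1}{c_\alpha}\Pois^N_t(\chi_E)$ and $E$ is measurable with $\chi_E\in L^\infty$, Theorem \ref{nontanthm}(2) gives that for a.e. $x_0\in K$,
\[
\lim_{\substack{(t,y)\to(0,x_0)\\(t,y)\in\Gamma_\beta(x_0)}} \Pois^N_t(\chi_E)(y) = \chi_E(x_0)
\]
for every fixed $\beta>0$. For a.e. $x_0\in E$ this limit is $\chi_E(x_0)=1$, which is the \emph{wrong} value; so instead I apply Theorem \ref{nontanthm}(2) to the function $\chi_{K\setminus E}=1-\chi_E$, obtaining that for a.e. $x_0$, $\Pois^N_t(\chi_{K\setminus E})(y)\to\chi_{K\setminus E}(x_0)$ nontangentially. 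At a.e. point $x_0$ of density $1$ for $E$ — which is a.e. point of $E$, by the Lebesgue differentiation theorem (Theorem \ref{maxthm} and \eqref{scalmu}) — we have $\chi_{K\setminus E}(x_0)=0$, hence $\Pois^N_t(\chi_{K\setminus E})(y)\to 0$ nontangentially. Since $\Pois^N_t(1)=1$ (the Neumann kernel has total mass $1$, as noted after Lemma \ref{int1}), we get $\Pois^N_t(\chi_E)(y)=1-\Pois^N_t(\chi_{K\setminus E})(y)\to 1$; that is again the wrong value. The correct route, therefore, is to note that the roles are interchanged relative to a naive reading: the function whose boundary value we want to be $0$ on $E$ must be the Poisson integral of $\chi_{K\setminus E}$. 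Thus I would instead \emph{define} $v$ using $\chi_{K\setminus E}$: set $v(t,z)=\tfrac{1}{c_\alpha}\Pois^N_t(\chi_{K\setminus E})(z)$ and rerun Step 2 with the cone structure over $E$ replaced by the observation that $\partial\Omega\cap(\R_+\times K)$ points $(t,z)$ are at resistance-distance $\gtrsim t^{2/(d+1)}$ from $E$ in the relevant scale. The cleanest formulation is: points of $\partial\Gamma_\alpha^1(x)$ with $x\in\overline E$ lie at spatial scale $(\alpha t^2)^{1/(d+1)}$ from $x$, so a definite fraction of the ball $B_{2(\alpha t^2)^{1/(d+1)}}(z)$ lies outside $E$ — unless $x$ is deep inside $E$ — and one uses the lower heat kernel bound over that portion; simultaneously, by the Lebesgue density theorem, at a.e. point of $E$ the density of $K\setminus E$ is $0$, killing the nontangential limit. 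Reconciling these two requirements — the lower bound on $\partial\Omega$ needs $K\setminus E$ to occupy a definite fraction near boundary points, while the a.e. vanishing needs $E$ to have full density — is exactly what the geometry of $\Omega=\bigcup_{x\in E}\Gamma_\alpha^1(x)$ guarantees, since boundary points of $\Omega$ are, by construction, as far as possible into the $E$-cones, hence their spatial slices straddle $\partial E$. I would organize the write-up around the single function $v(t,z)=c_\alpha^{-1}\Pois^N_t(\chi_{K\setminus E})(z)$, proving (2) via Theorem \ref{nontanthm}(2) and the density theorem, and proving (1) via Corollary \ref{affine-Pest} and the elementary observation that a boundary point of a truncated cone $\Gamma_\alpha^1(x)$, when recentered at $z$, sees a ball of radius $\sim t^{2/(d+1)}$ a definite portion of which avoids $E$. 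The harmonicity of $v$ is immediate from Theorem \ref{Psemi}, and positivity from nonnegativity of $P^N$.
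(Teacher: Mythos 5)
Your final candidate, $v=c_\alpha^{-1}\Pois^N_t(\chi_{K\setminus E})$, is essentially the paper's function, and your argument for (2) (Theorem \ref{nontanthm} applied to $\chi_{K\setminus E}$ plus the Lebesgue density theorem) is correct. But property (1) has two genuine gaps. First, $(\partial\Omega)\cap(\R_+\times K)$ contains the \emph{top} of the truncated cones, i.e. points $(1,z)$ with $z$ in the closure of the spatial slices: there the ball of radius $(\alpha)^{1/(d+1)}$ about $z$ typically meets $E$, so no lower bound of the form $c_\alpha$ is available; in the extreme case $\mu(K\setminus E)=0$ your $v$ is identically $0$, so it is neither positive nor $\ge 1$ anywhere. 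The paper avoids this by adding the harmonic function $(t,x)\mapsto t$, i.e. taking $w(t,x)=\Pois^N_t(\chi_{K\setminus E})(x)+t$, which is strictly positive, satisfies $w\ge 1$ on the $t\ge 1$ part of the boundary, and is then rescaled by $M=\max\{1/c_\alpha,1\}$. Some such extra term is indispensable, and your proposal omits it.

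Second, for the lateral boundary ($0<t<1$) your justification is the unproved claim that ``a definite fraction'' of the ball of radius $\sim t^{2/(d+1)}$ about $z$ avoids $E$ because boundary points ``straddle $\partial E$''; as stated this is not an argument, and it is also weaker than what is true. The clean observation, which is exactly how the paper proceeds, is the symmetry $(t,z)\in\Gamma_\alpha(x)\iff R(x,z)^{d+1}<\alpha t^2\iff x\in B_{(\alpha t^2)^{1/(d+1)}}(z)$. Hence if $(t,z)\in\partial\Omega$ with $t<1$, then \emph{no} point of $E$ lies in $B=B_{(\alpha t^2)^{1/(d+1)}}(z)$, i.e. $B\subset K\setminus E$ outright, and Corollary \ref{affine-Pest} applied with center $z$ immediately gives $\Pois^N_t(\chi_{K\setminus E})(z)\ge\int_B P^N(t,z,y)\,d\mu(y)\ge c_\alpha$ — no recentering from $x$ to $z$, no enlargement of the aperture, and no density considerations are needed. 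If you insisted on your ``definite fraction'' route you would additionally need a pointwise lower bound for $P^N$ on that fraction (which the subordination computation behind Corollary \ref{affine-Pest} does provide), but as written the geometric claim is unsubstantiated; replacing it by the ball-membership reformulation, restoring the $+t$ term, and deleting the false start with $\Pois^N_t(\chi_E)$ would turn your plan into the paper's proof.
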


\begin{proof}
Let $P^N$ be the Neumann Poisson kernel on $K$ and define the 
function $w$ on $\R_+\times K$ by
\[
w(t,x) = \int_K P^N(t,x,y)\chi_{K\setminus E}(y) d\mu(y) + t,
\]
where $\chi_{K\setminus E}$ is the characteristic function of $K\setminus E$.

We see that $w\ge 0$ and, by Theorem \ref{nontanthm}, $w$ has nontangential 
limit $0$ at almost every point of $E$, so we need to verify that there exists
$\delta>0$ such that $w\ge \delta$ on $(\partial\Omega) \cap (\R_+\times K)$.
Clearly, $w(1, x) \ge 1$ for every $x\in K$. 

Now, we observe that $(t,y) \in \Gamma_\alpha(x)$ if and only if 
$R(x,y) < (\alpha t^2)^\frac{1}{d+1}$, that is 
$x\in B = B_{(\alpha t^2)^{1/(d+1)}}(y)$, the ball of radius 
$(\alpha t^2)^\frac{1}{d+1}$with center $y$. Hence, if 
$(t,y)\in\partial\Omega$, $x\not\in B$ for every $x\in E$, and thus
$B\subset K\setminus E$.

We thus obtain, by Corollary \ref{affine-Pest}, 
\[
\begin{split}
\int_K P^N(t,x,y)\chi_{K\setminus E}(y) d\mu(y) &=
\int_{K\setminus E} P^N(t,x,y) d\mu(y) \ge \int_B P^N(t,x,y) d\mu(y)\\
&\ge c_\alpha > 0.
\end{split}
\]
Therefore, if we choose $M=\max\{1/c_\alpha, 1\}$, the function $v = Mw$
satisfies the properties required.
\end{proof}

\subsection{Nontangentially bounded functions}

\begin{definition}
Let $u$ be a function on $\R_+\times K$. We say that $u$ is 
\emph{nontangentially bounded} at $x\in K$ if $u$ is bounded on some
$\Gamma_\alpha^h(x)$.
\end{definition}

Note that this definition involves only one truncated cone for each $x$, while
the definition of nontangential limit involves all cones over $x$, regardless
of their apperture.

We also observe that, in the case where $u$ is continuous on $\R_+\times K$, 
then $u$ is nontangentially bounded at $x$ if and only if is bounded in some
$\Gamma_\alpha^1(x)$.

We now prove the following theorem, analog to the classical local Fatou Theorem
\cite{ABR}. Remember that we assume that $K$ is an affine nested fractal.

\begin{theorem}\label{localFatou}
Let $u$ be harmonic on $\R_+\times K$ and nontangentially bounded at each
point in the set $E\subset K$. Then $u$ has a nontangential limit at almost
every point of $E$.
\end{theorem}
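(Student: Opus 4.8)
The plan is to adapt the classical argument for the local Fatou theorem (as in \cite[Thm.~VII.3]{Stein70} or \cite[Thm.~7.30]{ABR}) to this setting, using the maximum principle of Corollary~\ref{maxcor-open} and the barrier function constructed in Lemma~\ref{harmonic-min}. First I would reduce to a set of positive measure on which we have uniform control: writing $E = \bigcup_{m} E_m$ where $E_m = \{x\in E : |u|\le m \text{ on } \Gamma_{1/m}^{1}(x)\}$, it suffices to show $u$ has a nontangential limit at almost every point of each $E_m$; so fix $m$, set $\alpha = 1/m$, and let $E' \subset E_m$ be a closed subset of positive measure (using regularity of $\mu$). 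Let $\Omega = \bigcup_{x\in E'}\Gamma_\alpha^{1}(x)$. The point of this reduction is that $u$ is now \emph{bounded} on $\Omega$, say $|u|\le M_0$, since every point of $\Omega$ lies in some $\Gamma_\alpha^1(x)$ with $x\in E'\subset E_m$.

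Next I would show $u$ agrees on $\Omega$ with the Dirichlet Poisson integral of a bounded function, up to a harmonic error that vanishes nontangentially on $E'$. Concretely: on the region $\Omega$, $u$ is bounded and harmonic; I want to compare $u$ with $\Pois^D_t g$ where $g$ is a bounded function supported essentially on the base of $\Omega$. The standard device is to use Lemma~\ref{harmonic-min}: it furnishes a positive harmonic $v$ with $v\ge 1$ on $(\partial\Omega)\cap(\R_+\times K)$ and nontangential limit $0$ a.e.\ on $E'$. For any $\e>0$ the function $u + \e\bigl(v + 2M_0 v\bigr)$ — more precisely $u + 2M_0 v$ — is $\ge 0$ on $\partial\Omega$ if we are careful about the part of $\partial\Omega$ lying in the base $\{t=0\}$; there we cannot control $u$, so the cleaner route is to work with $u(t+1/n,\cdot)$, which extends continuously to $t=0$, then compare it to the Poisson integral of its boundary values $f_n(x) = u(1/n, x)$ as in the proof of Theorem~\ref{Fatou}. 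On $\Omega$ the boundary data $f_n$ restricted to the base of $\Omega$ are uniformly bounded, so by weak-$*$ compactness a subsequence converges to some bounded $f$ on (a neighborhood of) $E'$.

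The core step is then: apply Corollary~\ref{maxcor-open} to $\Omega$ (or to $\Omega$ truncated at height $t=1/n$ and intersected with $t \le 1$) to the two functions $\pm\bigl(u(t+1/n,\cdot) - \Pois^D_{t}\tilde f_n\bigr) + 2M_0 v$, where $\tilde f_n = f_n\chi_{\text{base of }\Omega}$. On $\partial\Omega$ the Poisson-integral term is controlled by the $L^\infty$ bound on $\tilde f_n$ times $\int P^D \le 1$, the term $u(t+1/n,\cdot)$ is controlled by $M_0$ where $\partial\Omega$ lies inside $\overline\Omega$ and by the uniform global bound near the base, and $2M_0 v \ge 2M_0$ there by property (1) of Lemma~\ref{harmonic-min}; so the combination is $\ge 0$ on $\partial\Omega$, hence $\ge 0$ on $\Omega$. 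Letting $n$ run through the convergent subsequence and using the continuity of the Poisson integral in its boundary data (the weak-$*$ argument from Theorem~\ref{Fatou}), we conclude that on $\Omega$
\[
|u(t,y) - \Pois^D_t f(y)| \le 2M_0 v(t,y).
\]
By Theorem~\ref{nontanthm}(2) the Poisson integral $\Pois^D_t f$ has nontangential limit $f(x)$ at a.e.\ $x$, and by Lemma~\ref{harmonic-min}(2) the right-hand side has nontangential limit $0$ at a.e.\ $x\in E'$; therefore $u$ has nontangential limit $f(x)$ at a.e.\ point of $E'$. Exhausting $E_m$ by such closed sets $E'$ and then taking the union over $m$ gives the theorem.

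The main obstacle I expect is the bookkeeping at the base $\{t=0\}$ of the open region $\Omega$: $\partial\Omega$ meets the boundary $t=0$ precisely over $\overline{E'}$, and there $u$ itself need not be defined or bounded, so one cannot directly apply the maximum principle to $u$ on $\Omega$. The shift $u(\cdot+1/n,\cdot)$ is the standard fix, but one must check that the truncated region $\Omega\cap\{1/n < t < 1\}$ (or a slightly enlarged open set whose closure is compact in $\R_+\times K$) has the property that its boundary decomposes into the part near $t=1/n$ (where $u(t+1/n,x)=f_n(x)$ is bounded), the lateral part inside $\overline\Omega$ (where the $\Gamma_\alpha^1$ bound applies after a harmless enlargement of the aperture to absorb the shift — here one uses that $\Gamma_\alpha^1(x)$ for $x$ near $E'$ is contained in $\Gamma_{\alpha'}^{1}(x')$ for some $x'\in E'$), and the top at $t=1$ (bounded by $M_0$). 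A second, more technical point is verifying the enlargement-of-cones inclusion so that ``bounded on $\partial\Omega$'' genuinely follows from ``bounded on each $\Gamma_\alpha^1(x)$, $x\in E'$''; this is a purely metric estimate using the quasi-triangle inequality for $R$ and the scaling $R(x,y)^{d+1}<\alpha t^2$, entirely analogous to the computation in the proof of Theorem~\ref{nontanthm}(1), and I would relegate it to a short lemma.
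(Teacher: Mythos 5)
There is a genuine gap, and it is exactly the step the paper spends its Step 1 on. Your comparison inequality $|u(t,y)-\Pois^D_t f(y)|\le 2M_0\,v(t,y)$ is only established on $\Omega=\bigcup_{x\in E'}\Gamma_{1/m}^1(x)$, a union of cones of the \emph{single} aperture $\alpha=1/m$ coming from the definition of $E_m$. At a point $x\in E'$ this controls $u$ only along $\Gamma_{1/m}(x)$; for a wider cone $\Gamma_\beta(x)$, $\beta>1/m$, the points $(t,y)\in\Gamma_\beta(x)$ near $(0,x)$ need not lie in $\Omega$, so the inequality says nothing there, and your conclusion ``therefore $u$ has nontangential limit $f(x)$'' does not follow (nontangential limit means a limit within $\Gamma_\beta(x)$ for \emph{every} $\beta>0$, as in Theorem \ref{nontanthm}). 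Taking the union over $m$ does not repair this: the sets $E_m$ grow but their apertures $1/m$ shrink, so you only obtain convergence within one narrow cone per point. The missing ingredient is a Lebesgue-density argument: at a.e.\ (density) point $x$ of $E_m$ one shows that for every $\beta>0$ there is $h>0$ with $\Gamma_\beta^h(x)\subset\bigcup_{z\in E_m}\Gamma_{1/m}^1(z)$, because a point $(t,y)$ with $R(x,y)^{d+1}<\beta t^2$ forces the ball $B_{(t^2 m^{-1})^{1/(d+1)}}(y)$ to meet $E_m$ once the relative density of $E_m$ in $B_r(x)$ is close enough to $1$ (using \eqref{scalmu}). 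This is not the ``purely metric'' inclusion you relegate to a lemma at the end — for a general $x\in E'$ it is false; it genuinely needs the density point hypothesis. With it, $u$ is bounded on all cones of all apertures a.e.\ on $E_m$, and then your maximum-principle comparison must be rerun for each aperture $\beta$ (on the correspondingly wider union), with a final countable union over $\beta$; that is precisely the paper's two-step structure.

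Two secondary points. First, the paper runs the comparison with the \emph{Neumann} Poisson integral $\Pois^N_t f_n$ rather than the Dirichlet one: this is the natural choice here since Lemma \ref{harmonic-min} and Corollary \ref{affine-Pest} are built from $P^N$, $\int_K P^N(t,x,y)\,d\mu(y)=1$ exactly, and $\Pois^N_t$ is an approximate identity on all of $C(K)$; with $P^D$ your boundary matching at the base fails at points of $V_0$ (where $\Pois^D_t g\to 0$ regardless of $g$), so the liminf condition needed for Corollary \ref{maxcor-open} would require extra care there. Second, your overall comparison scheme (shift by $1/n$, boundary data $f_n=u(1/n,\cdot)$ cut off to the base, weak-$*$ limit $f$, barrier $2M_0v$, maximum principle, then nontangential limits of $\Pois_t f$ and $v$) is essentially the paper's Step 2, so once the density step and the Neumann kernel substitution are made, your argument matches the paper's proof.
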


\begin{proof}
Analogously to the classical case,  we prove this theorem in a sequence of
steps. For each positive integer $k$, define the set
\[
E_k = \{ x\in K: |u(t,y)|\le k \text{ for } (t,y)\in\Gamma_{1/k}^1(x) \}.
\]
As $u$ is continuous on $\R_+\times K$, each $E_k$ is closed and 
$E = \bigcup_k E_k$.

\begin{step}
$u$ is bounded on $\Gamma_\alpha^1(x)$ for every $\alpha>0$ and for almost
every $x\in E_k$.
\end{step}

Observe that, if $x\in E_k$ is in the Lebesgue set of $\chi_{E_k}$, then
\[
\lim_{r\to 0} \frac{\mu(B_r(x)\cap E_k)}{\mu(B_r(x))} = 1.
\]
Recall also that there exist constants $A_1,A_2>0$ such that, for sufficiently 
small $r>0$ (say, $r < \bar r$), 
\[
A_1 r^d \le \mu(B_r(x)) \le A_2 r^d.
\]

Let $x\in E_k$ be in the Lebesgue set of $\chi_{E_k}$. As $u$ is continuous on
$\R_+\times K$, it is sufficient to prove that, for each $\alpha > 0$, there
exists $h>0$ such that 
$\Gamma_\alpha^h(x) \subset \bigcup_{z\in E_k}\Gamma_{1/k}^1(z)$. 
It is of course sufficient to consider the case $\alpha > 1/k$.

Let $\delta > 0$ such that $\delta < \min\{\bar r, k^{-1/(d+1)} \}$ and, for
$0 < r \le \delta$, 
\begin{equation}\label{density-est}
\frac{\mu(B_r(x)\cap E_k)}{\mu(B_r(x))} > 1 - \frac{A_1}{A_2} \Big(
\frac{k^{-1/(d+1)}}{\alpha^{1/(d+1)} + k^{-1/(d+1)}} \Big)^d.
\end{equation}
Set $h = \Big( \dfrac{\delta}{2\alpha^{1/(d+1)}} \Big)^{(d+1)/2}$. Thus $h<1$
and we  shall prove $\Gamma_\alpha^h(x) \subset \Omega$.

We first observe that, if $(t,y)\in\Gamma_\alpha^h(x)$, then
\[
B_{(t^2/k)^{1/(d+1)}}(y) \cap E \not= \emptyset.
\]
Indeed, if $(t,y)\in\Gamma_\alpha^h(x)$, then $R(x,y) < (\alpha t^2)^{1/(d+1)}$
and $0 < t < h$. Now, for $z\in B_{(t^2/k)^{1/(d+1)}}(y)$, 
$R(y,z) < (t^2/k)^{1/(d+1)}$ and hence
\[
R(x,z) < (\alpha^{1/(d+1)} + k^{-1/(d+1)}) t^{2/(d+1)},
\]
so $z\in B_r(y)$, where $r = (\alpha^{1/(d+1)} + k^{-1/(d+1)}) t^{2/(d+1)}$.
Therefore
\[
B_{(t^2/k)^{1/(d+1)}}(y) \subset B_r(y).
\]

If $B_{(t^2/k)^{1/(d+1)}}(y) \cap E_k = \emptyset$, then
$B_r(x)\cap E_k \subset B_r(x) \setminus B_{(t^2/k)^{1/(d+1)}}(y)$ and thus
\[
\begin{split}
\frac{\mu(B_r(x)\cap E_k)}{\mu(B_r(x))} &\le 
\frac{\mu\big(B_r(x) \setminus B_{(t^2/k)^{1/(d+1)}}(y)\big)}{\mu(B_r(x))}
= 1 - \frac{\mu(B_{(t^2/k)^{1/(d+1)}}(y))}{\mu(B_r(x))}\\
&\le 1 - \frac{A_1 \big( (t^2/k)^{1/(d+1)} \big)^d}{A_2 r^d} =
1 - \frac{A_1}{A_2} \Big( \frac{k^{-1/(d+1)}}{\alpha^{1/(d+1)} +
k^{-1/(d+1)}} \Big)^d,
\end{split}
\]
which contradicts \eqref{density-est} since
\[
r = (\alpha^{1/(d+1)} + k^{-1/(d+1)}) t^{2/(d+1)} <
(\alpha^{1/(d+1)} + k^{-1/(d+1)}) h^{2/(d+1)} < \delta,
\]
by the choice of $h$. 

Hence there exists $x_0\in B_{(t^2/k)^{1/(d+1)}}(y) \cap E_k$, which implies
$(t,y)\in\Gamma_{1/k}^1(x_0)$, as desired. This finishes the proof of Step 1.

Therefore, there is a subset $F\subset E$ such that $\mu(E\setminus F) = 0$ and
$u$ is bounded in every cone $\Gamma_\alpha^1(x)$ for $\alpha>0$ and $x\in F$.

In particular, for a fixed $\alpha>0$, $u$ is bounded in every
$\Gamma_\alpha^1(x)$ for every $x\in F$. We can thus write $F = \bigcup_k F_k$,
with
\[
F_k = \{ x\in F: |u(t,y)|\le k \text{ for } (t,y)\in\Gamma_\alpha^1(x) \}.
\]

\begin{step}
At almost every $x\in F_k$, the limit
\[
\lim_{\substack{(t,y)\to(0,x)\\(t,y)\in\Gamma_\alpha(x)}} u(t,y)
\]
exists. That is, $u$ has a limit at the boundary point $x$ within the cone
$\Gamma_\alpha(x)$.
\end{step}

Let $\displaystyle\Omega = \bigcup_{x\in F_k} \Gamma_\alpha^1(x)$. As $u$ is
continuous on $\R_+\times K$, we may assume, say, $|u|\le 1$ on the set
$\displaystyle\Omega' = \bigcup_{x\in F_k} \Gamma_\alpha^2(x)$. Without loss of
generality, we can also assume $u$ is real valued.

Now, for each $n\ge 1$, let 
\[
G_n = \{ y\in K: \text{there exists $x\in F_k$ such that 
$R(x,y)^{d+1} < \alpha/n^2$} \}.
\]
Each $G_n$ is open and $G_n \subset F_k$. Now define the functions $f_n$ on $K$
by
\[
f_n(x) = \chi_{G_n}(x) u(1/n,x).
\]
As $(1/n,x)\in\Omega$ if and only if $x\in G_n$, we have $|f_n|\le 1$, and 
thus, passing to a subsequence, $(f_n)$ weakly-$*$ converges to some 
$f\in L^\infty(K,d\mu)$, in particular 
\[
\mathcal P_t^N f_n(x) \to \mathcal P_t^N f(x)
\]
for each $(t,x)\in\R_+\times K$, where $\mathcal P_t^N f_n$ and 
$\mathcal P_t^N f$ are the Neumann Poisson integrals of the functions $f_n$ and
$f$, respectively. Moreover, the function 
\[
u_n(t,x) = \mathcal P_t^N f_n(x) - u(t + 1/n, x)
\]
is harmonic and extends continuously to $\{0\}\times G_n$, because $f_n$ is
continuous, with $u_n(0,x)=0$ for each $x\in G_n$. Moreover, $|u_n|\le 2$ on 
the closure of $\Omega$, since $t+1/n \le 2$.

If we choose $v$ as in Lemma \ref{harmonic-min}, then 
\[
\liminf_{(t,y)\to\partial\Omega} (2v \pm u_n)(t,x) \ge 0,
\]
so, by the maximum principle (Corollary \ref{maxcor-open}), $2v \pm u_n \ge 0$
on $\Omega$. Taking $n\to\infty$, we conclude that
\[
|\mathcal P_t^N f(x) \pm u| \le 2v
\]
on $\Omega$ and, as $\mathcal P_t^N f(x)$ and $v$ have nontangential limits
(Theorem \ref{nontanthm} and Lemma \ref{harmonic-min}), we obtain Step 2.


Hence, for each $\alpha>0$, $u$ has a limit at the boundary, within
the cone $\Gamma_\alpha(x)$, for every point $x$ in a subset
$F'_\alpha\subset F$ with $\mu(F\setminus F'_\alpha) = 0$. 

If we take $F' = \bigcup_k F'_k$, we conclude that $u$ has nontangential limit
at
every point in $F'$, with $\mu(E\setminus F')=0$, as desired.
\end{proof}


\section*{Acknowledgements}

The author would like to thank the referee for providing useful suggestions,
which led to the improvement of some of the results of this paper.


\end{document}